\newcommand{\GA}{{\rm GA}}
\newcommand{\SG}{\mathfrak{S}}
\newcommand{\BA}{{\rm BA}}
\newcommand{\TA}{{\rm TA}}
\newcommand{\PA}{{\rm PA}}
\newcommand{\GL}{{\rm GL}}
\newcommand{\M}{{\rm M}}
\newcommand{\Af}{{\rm Aff}}
\newcommand{\Tr}{{\rm Tr}}
\DeclareMathOperator{\LND}{LND}
\DeclareMathOperator{\Spec}{Spec}
\newcommand{\A}{\mathbb{A}}
\newcommand{\IN}{\mathbb{N}}
\newcommand{\Ik}{\mathbb{K}}
\newcommand{\K}{\mathbb{K}}
\newcommand{\al}{\alpha}
\newcommand{\bx}{{\bf x}}
\newtheorem{theorem}{Theorem}
\newtheorem{lemma}[theorem]{Lemma}
\newtheorem{corollary}[theorem]{Corollary}
\theoremstyle{definition}
\newtheorem{definition}{Definition}
\theoremstyle{remark}
\newtheorem{remark}{Remark}
\newtheorem{example}{Example}
\newcommand{\supp}{{\ensuremath{\rm supp\ }} }
\newcommand{\cotame}{co-tame\xspace}
\newcommand{\pri}{\ensuremath{\smallsetminus}}
\newcommand{\lex}{\ensuremath{<_{\rm lex}}}
\newcommand{\glex}{\ensuremath{>_{\rm lex}}}
\title{Co-tame polynomial automorphisms}
\author{%
Eric Edo\thanks{ERIM, University of New Caledonia. Email address: \texttt{eric.edo@univ-nc.nc}} \ and %
Drew Lewis\thanks{Department of Mathematics and Statistics,  University of South Alabama.  Email address: \texttt{drewlewis@southalabama.edu}}%
}
\begin{document}
\maketitle

\begin{abstract}
A polynomial automorphism of $\A^n$ over a field of characteristic zero is called {\em \cotame} if, together with the affine subgroup, it generates the entire tame subgroup.   We prove some new classes of automorphisms of $\A^n$, including $3$-triangular automorphisms, are \cotame.  Of particular interest, if $n=3$, we show that the statement ``Every $m$-triangular automorphism is \cotame'' is true if and only if $m \leq 3$; this improves upon positive results of Bodnarchuk (for $m \leq 2$, in any dimension $n$) and negative results of the authors (for $m \geq 6$, $n=3$).  The main technical tool we introduce is a class of maps we term {\em translation degenerate automorphisms}; we show that all of these are \cotame, a result that may be of independent interest in the further study of \cotame automorphisms.
\end{abstract} 

\section{Introduction}

Throughout, we use $\Ik$ to denote a field of characteristic zero, and we denote by $\GA_n(\Ik)$ the group of polynomial automorphisms of $\A^n _\Ik$.  A fundamental question in affine algebraic geometry is to understand the structure of this group and its subgroups, most notably the tame subgroup $\TA_n(\Ik)$, defined as the subgroup generated by affine and triangular automorphisms.    When $n=2$, a classical result of Jung \cite{Jung} is that every automorphism is tame; however, Shestakov and Umirbaev \cite{SU} showed that this is not true when $n=3$.  

In this paper, we are interested in the notion of {\em \cotame automorphisms}, first defined in \cite{E}.  An automorphism is said to be co-tame if, together with the affine subgroup $\Af_n(\Ik)$, it generates the entire tame subgroup.  These automorphisms are particularly interesting when trying to understand the subgroup lattice of $\GA_n(\Ik)$, since any intermediate subgroup between $\Af_n(\Ik)$ and $\TA_n(\Ik)$ must contain an automorphism which is not co-tame.  In dimension one, all automorphisms are affine, and hence all automorphisms are trivially co-tame.  In dimension two, the amalgamated free product structure of $\TA_2(\Ik)$ provides that no automorphisms are co-tame.  However, in dimension three and higher, things become more interesting.  The first example of a co-tame automorphism for $n \geq 3$ was produced by Derksen in 1997 (unpublished), namely $(x_1+x_2^2,x_2,\ldots,x_n) \in \GA_n(\Ik)$.

In 2002, Bodnarchuk \cite{Bodnarchuk02} showed that all triangular maps are either affine or co-tame; further, he showed that all  bitriangular maps (those of the form $\tau _1 \alpha \tau _2$, where $\tau _1,\tau_2$ are triangular and $\alpha$ is affine) are also either affine or \cotame.  He also showed \cite{Bodnarchuk05} that all non-affine parabolic and biparabolic maps are \cotame (see sections \ref{sec:pa} and \ref{sec:para} for precise definitions, and section \ref{sec:known} for proofs of these results).  The first author \cite{E} showed that even certain wild automorphisms, including the famous Nagata map are \cotame.  The first example of an automorphism (in characteristic zero) which is tame but not \cotame was produced by the authors in \cite{EL}.

In this paper, we aim to improve upon the results of Bodnarchuk.  We say a (tame) automorphism is {\em $m$-triangular} if it can be written as $\alpha _0 \tau _1 \alpha _1 \cdots \tau _m \alpha _m$ where each $\tau _i$ is triangular and each $\alpha _i$ is affine.  We remark that a $m$-triangular map could also be $k$-triangular for some $k \neq m$.
Bodnarchuk's results lead us to ask, for which $m$ are all $m$-triangular maps either affine or \cotame?  While the aforementioned results of Bodnarchuk give a positive answer for $m\leq 2$ in all dimensions, the example of~\cite{EL} is a $6$-triangular automorphism in dimension $3$ that is not \cotame.  Our first main result is to improve Bodnarchuk's positive result.

\begin{theorem}\label{mt1} For any $n \geq 3$, every $3$-triangular map is either affine or \cotame.
\end{theorem}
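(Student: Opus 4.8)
The plan is to reduce co-tameness to a membership statement and then exploit the translation-degeneracy machinery announced in the abstract. The basic reduction I would use is the following criterion, which is immediate from Bodnarchuk's theorem that non-affine triangular (indeed bitriangular) maps are co-tame: if $g$ is tame and the group $\langle \Af_n(\Ik), g\rangle$ contains a single non-affine triangular automorphism $\tau$, then $g$ is co-tame. Indeed, $\tau$ together with the affine group generates the whole tame group, so
\[
\TA_n(\Ik) = \langle \Af_n(\Ik), \tau\rangle \subseteq \langle \Af_n(\Ik), g\rangle \subseteq \TA_n(\Ik),
\]
the last inclusion because $g$ is tame, forcing equality. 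Thus the entire problem becomes: given a non-affine $3$-triangular $g$, manufacture a non-affine triangular element inside $\langle \Af_n(\Ik), g\rangle$.

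Since co-tameness is invariant under pre- and post-composition by affine maps (and under inversion), I would first normalize $g$ up to affine equivalence to the shape $\tau_1 \alpha_1 \tau_2 \alpha_2 \tau_3$, absorbing the outer affine factors and normalizing $\tau_1,\tau_3$ and the linear parts of $\alpha_1,\alpha_2$ as far as possible. The organizing data are the Newton polytopes at infinity of the components and their associated leading forms, for which the $\New$/$\Newi$ and support/convex-hull apparatus is designed. The heart of the argument is to show that every such non-affine $g$ is translation degenerate: for a suitably chosen direction $v$, conjugating by the one-parameter family of translations $t_{sv}$ and tracking the leading forms of the resulting family produces a degeneration whose leading behavior is triangular and non-affine. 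This is where the bound $m \le 3$ is essential — with only three triangular factors the leading forms of the composite cannot simultaneously be arranged to resist every translation direction, whereas the $6$-triangular example of \cite{EL} shows that with more factors one can build a rigid, non-degenerate configuration.

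The technical theorem I would establish independently is that translation degenerate automorphisms are co-tame; this is the engine that converts the degeneration above into an actual non-affine triangular element of $\langle \Af_n(\Ik), g\rangle$. The mechanism is to take commutators of $g$ with translations, $t_v\, g\, t_{-v}\, g^{-1} \in \langle \Af_n(\Ik), g\rangle$, and observe that in the chosen degenerate direction these successively lower the Newton-polytope complexity while remaining inside the group; iterating the degeneration drives the complexity down to a non-affine triangular (or bitriangular) map, at which point the criterion of the first paragraph applies.

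I expect the main obstacle to be the combinatorial case analysis needed to prove that a non-affine $3$-triangular map is always translation degenerate. After normalization there remain several configurations of the leading forms of $\tau_1,\tau_2,\tau_3$ relative to $\alpha_1,\alpha_2$, and one must verify in each case that either the composite genuinely collapses to an affine or bitriangular map (handled directly by Bodnarchuk) or that a degenerating translation direction exists. Controlling how the leading form of the middle factor $\tau_2$ survives the two affine conjugations without cancelling against those of $\tau_1$ and $\tau_3$ — and pinning down exactly the configurations ruled out by having only three factors — is the delicate point, and is precisely what ceases to hold once $m$ grows.
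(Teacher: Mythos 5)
Your opening reduction is fine, and you correctly identify the two load-bearing ingredients: Bodnarchuk's theorems as base cases and the theorem that translation degenerate maps are co-tame. But your central claim --- that every non-affine $3$-triangular map is translation degenerate --- is false, and it also inverts the meaning of the term. In the paper, $\phi$ is translation degenerate in $x_r$ when $\phi^{-1}\Tr_{n,r}(\Ik)\phi\subset \Af_n(\Ik)$, i.e.\ when \emph{every} translation conjugate is affine; this is precisely the bad case in which the conjugation trick yields nothing, and it is handled separately by a factorization theorem (Theorem \ref{thm:tdfactor}) showing such maps are affinely equivalent to biparabolic ones. It cannot be the generic case: already $\tau\alpha$ with $\tau=(x_1,x_2+x_1^2,x_3+x_2^2)$ and $\alpha\in\GL_3(\Ik)$ generic is a non-affine $1$-triangular (hence $3$-triangular) map for which $\phi^{-1}\theta_{r,c}\phi\notin\Af_3(\Ik)$ for every $r$ and suitable $c$, since a generic conjugate $\alpha^{-1}\theta_{r,c}\alpha$ is a translation with nonzero first coordinate and conjugating that by $\tau$ produces a term $4abc_1$ in the last component. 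So the dichotomy you propose (``either the composite collapses to something bitriangular or a degenerating translation direction exists, because three factors cannot resist every direction'') does not match the actual structure, and you give no mechanism for the case analysis you defer to.

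The idea you are missing is the specific commutation that makes $m=3$ work. Since $\theta_{n,c}$ commutes with every lower triangular automorphism, conjugating $\phi=\tau_1\alpha_1\tau_2\alpha_2\tau_3$ (with $\alpha_i$ linear) by $\theta=\theta_{n,c}$ gives
$$\phi^{-1}\theta\phi=\tau_3^{-1}\alpha_2^{-1}\bigl(\tau_2^{-1}(\alpha_1^{-1}\theta\alpha_1)\tau_2\bigr)\alpha_2\tau_3,$$
where $\alpha_1^{-1}\theta\alpha_1$ is a translation whose conjugate by the triangular $\tau_2$ is again triangular. Thus a single conjugation collapses a $3$-triangular word to $\psi^{-1}\alpha^{-1}\tilde\tau\alpha\psi$ with $\psi=\tau_3$ parabolic and $\tilde\tau$ triangular; this collapse is exactly where $m\le 3$ enters, and it fails for four factors. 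The paper then runs a genuine induction on this normal form (Theorem \ref{thm:3tri}): conjugating again by translations and invoking Lemma \ref{lem:drdecrease} strictly decreases the lexicographic degree vector $d_r(\tilde\tau)$ until $\tilde\tau$ becomes affine, at which point the map is biparabolic and Theorem \ref{thm:biparabolic} applies, with translation degeneracy as the only escape at each stage (covered by Theorem \ref{thm:tdcotame}). Your commutator scheme gestures at such an induction but supplies neither the normal form nor the decreasing invariant, which is where all the work lies.
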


In the final section of this paper we improve the result of the authors in~\cite{EL} to produce a $4$-triangular automorphism that is not \cotame, giving
\begin{theorem}\label{mt2}If $n=3$, the statement ``Every $m$-triangular map is either affine or \cotame'' is true if and only if $m \leq 3$.
\end{theorem}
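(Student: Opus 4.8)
The plan is to split the biconditional into its two implications, handling the easy direction by appeal to Theorem~\ref{mt1} and concentrating all the effort on producing a single counterexample for the hard direction. For the forward implication (if $m \le 3$ then the statement holds), observe that any $m$-triangular map with $m \le 3$ can be rewritten as a $3$-triangular map by inserting copies of the identity, which is simultaneously affine and triangular, into its canonical expression $\alpha_0 \tau_1 \alpha_1 \cdots \tau_m \alpha_m$. Since $n=3 \ge 3$, Theorem~\ref{mt1} then applies verbatim and guarantees the map is affine or \cotame. Thus the real content is the reverse implication, for which it suffices to exhibit one automorphism $g \in \GA_3(\Ik)$ that is $4$-triangular and not \cotame: such a $g$ is automatically non-affine (affine maps being trivially \cotame), and for every $m \ge 4$ it is also $m$-triangular by padding its length-$4$ expression with identity factors, so it refutes the statement for all $m \ge 4$ at once.

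To construct the counterexample I would start from the $6$-triangular non-\cotame automorphism of~\cite{EL} and search for a shorter relative, writing down an explicit candidate $g = \alpha_0 \tau_1 \alpha_1 \tau_2 \alpha_2 \tau_3 \alpha_3 \tau_4 \alpha_4 \in \GA_3(\Ik)$ chosen so that both its component polynomials and, more importantly, its behavior at infinity are tightly controlled. The first task is bookkeeping: verify directly from the explicit formulas that $g$ genuinely admits a $4$-triangular decomposition with the $\tau_i$ triangular and the $\alpha_i$ affine.

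The crux is proving that $g$ is not \cotame, i.e.\ that $\langle \Af_3(\Ik), g\rangle$ is a proper subgroup of $\TA_3(\Ik)$. The strategy I would pursue is to locate $g$ inside a proper intermediate subgroup $\Af_3(\Ik) \subseteq G \subsetneq \TA_3(\Ik)$; since $\langle \Af_3(\Ik), g\rangle \subseteq G$, properness of $G$ immediately yields non-\cotameness. To build $G$ I would use an invariant of the ``shape at infinity'' of a tame automorphism, such as a degree-type function or the Newton polytope at infinity $\Newi$ used elsewhere in the paper, and take $G$ to be the set of tame maps whose invariant lies in a region stable under composition. The argument then breaks into three parts: (i) show the region is closed under composition and inversion, so $G$ is a subgroup; (ii) check that $\Af_3(\Ik)$ and the specific map $g$ both lie in $G$, exploiting the careful choice of $g$; and (iii) exhibit an explicit tame automorphism, for instance a single well-chosen triangular or Derksen-type map, falling outside $G$, which establishes $G \subsetneq \TA_3(\Ik)$.

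The main obstacle is part (i): verifying that the chosen invariant really is sub-multiplicative, or otherwise controlled, under composition with affine maps and with $g$, so that the candidate set is genuinely a subgroup rather than a convenient collection of maps. This is where the geometry of $\Newi$ and the interaction between the triangular factors of $g$ and the affine group must be analyzed precisely; the delicate point is that the constraint defining $G$ must be loose enough to contain the $4$-triangular map $g$ yet rigid enough to exclude some tame generator. Matching these two competing requirements is exactly what pins the threshold at $m=4$, and it is also the place where the gap left open between Theorem~\ref{mt1} ($m \le 3$) and the earlier $6$-triangular example of~\cite{EL} must finally be closed.
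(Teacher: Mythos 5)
Your reduction of the theorem is sound in outline: the direction $m\le 3$ follows from Theorem~\ref{mt1} by padding with identity factors exactly as you say, and the direction $m\ge 4$ reduces to exhibiting a single $4$-triangular automorphism of $\A^3$ that is neither affine nor \cotame. You have also correctly identified the general strategy the paper uses for the hard direction, which is the strategy of~\cite{EL}: trap $\langle\Af_3(\Ik),g\rangle$ inside a proper subset of $\TA_3(\Ik)$ cut out by a degree-type invariant, and exhibit a simple tame map (the paper uses maps with $\deg_{(1,1,1)}((f)\phi)\le 4$ for all linear $f$) that violates the invariant. One small logical slip: you assert that a non-\cotame $g$ is ``automatically non-affine (affine maps being trivially \cotame)''; for $n\ge 2$ affine maps are \emph{not} \cotame (they generate only $\Af_n(\Ik)$ together with the affine group), so non-\cotameness does not imply non-affineness and you must check separately that your $g$ is not affine. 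This is easy for any concrete candidate but your stated justification is backwards.

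The genuine gap is that the entire content of the hard direction is left unexecuted. You never write down the candidate $g$, never specify the invariant, and never carry out steps (i)--(iii) of your own plan; but matching the two ``competing requirements'' you describe at the end is precisely the theorem, not a detail to be filled in. The paper's solution is to take $g=\theta_2=(\pi\beta)^2\pi(\pi\beta)^{-2}$ with $\beta=(x+y^2(y+z^2)^2,y+z^2,z)$ and $\pi=(y,x,z)$ --- the $N=2$ member of the family whose $N\ge 3$ members give the $6$-triangular example of~\cite{EL} --- and the essential new work is that the stability theorem of~\cite{EL} (Theorem~3 there) \emph{fails} for $N=2$ with the sets $\mathcal{P}^*$, so one must replace them by the sets $\mathcal{Q}_4^*=\bigcup_{m\ge 4,n\ge 0}\mathcal{Q}^*_{m,n}$ and reprove stability under $\pi\beta$, $\pi\beta^{-1}$, and $(\pi\beta^{-1})^2\alpha\pi(\pi\beta)^2$. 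The only case not already covered by~\cite{EL} is $\alpha\in\mathcal{A}_3\setminus\mathcal{A}_4$, where $\alpha=(u^8x+cz+d,u^2y,uz)$, and handling it requires an explicit expansion of $\gamma=\beta^{-1}\pi\beta^{-1}\alpha\beta\pi\beta$ together with a case analysis on $u^{30}\ne 1$, $u^{30}=1$ with $u^6\ne 1$, and $u^6=1$ with $c\ne 0$ or $c=0$, tracking $\deg_{(1,1,0)}$, $\deg_{(3,3,1)}$ and $\ldeg_2$ of the components. None of this is forced by your outline, and without it the threshold $m=4$ is not established.
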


The basic technique we use to show that a certain class of automorphisms are \cotame is quite simple.  Consider the translation $\theta = \theta_{r,c}:=(x_1,\ldots,x_{r-1},x_r+c,x_{r+1},\ldots,x_n)$ for some $c \in \Ik$; given $\varphi \in \GA_n(\Ik)$, we then consider the map $\widetilde{\varphi} = \varphi ^{-1} \theta \varphi$.  
By carefully choosing the variable $x_r$, we ensure that $\widetilde{\varphi}$ remains in the desired class of maps (e.g. triangular, parabolic, etc.) and set up an induction on an appropriately chosen degree of $\varphi$.
It is easy to see that if $\widetilde{\varphi}$ is co-tame, then $\varphi$ must be as well.  If we can repeat this process and eventually produce a known co-tame map (e.g. Derksen's map, a triangular map, etc.), then $\varphi$ has to be co-tame.

The key obstacle is that this technique breaks down if $\widetilde{\varphi} \in \Af_n(\Ik)$.  
The simple approach is to simply choose a different $c \in \Ik$.  However, for some maps, for a fixed index $r$,  every choice of $c$ results in $\widetilde{\varphi}$ being affine.  We term these maps {\em translation degenerate in $x_r$}, and study them in detail in section \ref{sec:td}.  
In fact, we show the following, which may be of independent interest.

\begin{theorem}\label{mt3} Let $n \geq 3$ and let $\varphi \in \GA_n(\Ik)$.  Fix $1 \leq r \leq n$, and for $c \in \Ik$, set $\theta _{r,c} = (x_1,\ldots,x_{r-1},x_r+c,x_{r+1},\ldots,x_n)$.  If $\varphi ^{-1} \theta _{r,c} \varphi$ is affine for each $c \in \Ik$, then $\varphi$ is either affine or \cotame.
\end{theorem}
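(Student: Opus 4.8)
The plan is to translate the hypothesis into the language of the conjugated derivation, and then use a one-parameter group of linear dilations to produce a non-affine \emph{parabolic} map inside $\langle \Af_n(\Ik),\varphi\rangle$, which is co-tame by Bodnarchuk's theorem.

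First I would pass to the one-parameter subgroup $\psi_c := \varphi^{-1}\theta_{r,c}\varphi$. Since $c \mapsto \theta_{r,c}$ is the flow of the locally nilpotent derivation $\partial_{x_r}$, its conjugate $\psi_c$ is the flow of a locally nilpotent derivation $D$ (the image of $\partial_{x_r}$ under conjugation by $\varphi$). Because $\Ik$ is infinite, the hypothesis ``$\psi_c \in \Af_n(\Ik)$ for all $c$'' is equivalent to $D$ being an affine vector field $x \mapsto Mx+v$; local nilpotency forces $M$ nilpotent, so $\psi_c(x) = e^{cM}x + b(c)$ is polynomial in $c$. Since $\theta_{r,c}$ is fixed-point-free for $c \neq 0$, so is its conjugate $\psi_c$; translating this into linear algebra gives $v \notin \operatorname{im} M$, which is exactly the existence of a \emph{linear slice}, i.e.\ a linear form $s$ with $D(s)=1$. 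I would also record the functional equation $\varphi \circ \psi_c = \theta_{r,c}\circ \varphi$, whose components say that $\varphi_i \in \ker D$ for $i \neq r$ and that $\varphi_r$ is itself a slice.

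The key device is a dilation. Using the Jordan decomposition of $M$ together with the slice, I would grade the coordinates so that $D$ is homogeneous of weight $-1$ (the slice has weight $1$, and weights drop by $1$ along each Jordan chain); the associated one-parameter group of linear maps $w_\lambda$ ($\lambda \in \Ik^*$) then satisfies $w_\lambda D w_\lambda^{-1} = \lambda D$, equivalently $w_\lambda \psi_c w_\lambda^{-1} = \psi_{\lambda c}$. Setting $h_\lambda := \varphi\, w_\lambda\, \varphi^{-1} \in \langle \Af_n(\Ik),\varphi\rangle$ and conjugating the functional equation yields $h_\lambda\, \theta_{r,c}\, h_\lambda^{-1} = \theta_{r,\lambda c}$. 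Thus $h_\lambda$ normalizes the group $\{\theta_{r,c}\}_c$ and hence preserves the fibration of $\A^n$ by lines in the $x_r$-direction; that is, $h_\lambda$ is parabolic. If some $h_\lambda$ is non-affine, then $\langle \Af_n(\Ik),\varphi\rangle$ contains a non-affine parabolic map, so it contains $\TA_n(\Ik)$ and $\varphi$ is co-tame.

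The main obstacle is the degenerate case in which $h_\lambda$ is affine for \emph{every} $\lambda$. Here $\varphi$ conjugates the whole dilation torus $\{w_\lambda\}$ (and the flow $\{\psi_c\}$) into $\Af_n(\Ik)$, and I would argue by rigidity that this is only possible when $\varphi$ equals, up to an affine factor, the tame automorphism $T$ that straightens $D$ to a coordinate partial derivative (the map recording the flow-invariants generating $\ker D$). Concretely, writing $\varphi = \varphi' T$ where $\varphi'$ conjugates a coordinate translation to $\theta_{r,c}$, one checks that $\varphi'$ has an explicit block form (a parabolic factor acting through a map in $\GA_{n-1}(\Ik)$, composed with a single shear of the slice coordinate); the assumption that every $\varphi' w_\lambda \varphi'^{-1}$ is affine then forces, via a centralizer computation for the induced dilation on $\A^{n-1}$, both the $\GA_{n-1}$-factor and the shear to be affine, so $\varphi' \in \Af_n(\Ik)$ and $\varphi = (\text{affine})\circ T$. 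It finally remains to see that $T$ is triangular in a suitable ordering of the variables, which should follow from the triangular dependence of the invariants of an affine locally nilpotent derivation; granting this, $\varphi$ is either affine or, by Bodnarchuk's theorem for triangular maps, co-tame. I expect the verification of this rigidity statement and of the triangularity of the straightening map $T$ to be the delicate part; the dilation construction and the reduction to a parabolic map are the conceptual core.
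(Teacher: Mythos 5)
Your reduction in the non-degenerate case is attractive and genuinely different from the paper's route: the paper never uses a dilation torus, but instead proves an explicit factorization $\varphi^{-1}=\lambda\tau^{-1}\gamma\mu\rho$ with $\tau$ in a normal form (``TTD form'') and then disposes of the resulting maps by a case analysis. Your observation that $h_\lambda=\varphi w_\lambda\varphi^{-1}$ normalizes $\Tr_{n,r}(\Ik)$, hence is parabolic and (if non-affine) co-tame by Bodnarchuk, is correct and uses only the parabolic theorem, which is proved independently of this statement, so there is no circularity. The translation of the hypothesis into an affine locally nilpotent vector field $Mx+v$ with $M$ nilpotent, and the existence of a linear slice (equivalently $v\notin\operatorname{im}M$), are also sound and match the paper's Lemmas on $\partial^k H_i/\partial x_1^k$.

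The gap is in the degenerate case, which is precisely where all the difficulty of the theorem lives (in the paper it occupies the factorization theorem plus two further lemmas and a lengthy computation). Your rigidity claim --- that if every $h_\lambda$ is affine then the $\GA_{n-1}(\Ik)$-factor and the shear of $\varphi'$ are forced to be affine by a ``centralizer computation'' --- is false as stated. The intertwiner condition only forces $\varphi'$ to be weight-homogeneous up to an affine factor, and the weight-homogeneous automorphism group for your grading is far from affine: the anchored Jordan chain forces consecutive weights $1,2,\dots,k$ on the target coordinates, and any coincidence $\mathrm{wt}(x_j)=2\,\mathrm{wt}(x_i)$ (which always occurs once the chain has length $\ge 2$, and also between distinct Jordan blocks depending on how you weight the unanchored ones) admits non-affine centralizing maps such as $x_j\mapsto x_j+x_i^2$. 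Concretely, for $D(x_1)=0$, $D(x_2)=1$, $D(x_3)=x_2$ with weights $(4,1,2)$, the map $\varphi=(x_1+(x_3-\tfrac12x_2^2)^2,\,x_2,\,x_3-\tfrac12x_2^2)$ is translation degenerate in $x_2$, satisfies $h_\lambda=w_\lambda$ for all $\lambda$, and its parabolic factor $\varphi'=(x_1+x_3^2,x_2,x_3)$ is not affine; so the conclusion ``$\varphi'\in\Af_n(\Ik)$'' fails. (This particular $\varphi$ happens to be triangular after permuting variables, but your argument does not show this, and whether the degenerate case always lands in $\Af_n(\Ik)\cdot\BA_n(\Ik)\cdot\Af_n(\Ik)$ is exactly the assertion that needs proof: it requires showing the conjugated torus is diagonalizable in $\Af_n(\Ik)$, that $\varphi'$ is therefore weight-homogeneous up to affine, that positivity of the weights yields a triangular block structure, and that this structure is compatible with the straightening $T$.) Until that is carried out, the degenerate case --- the analogue of the paper's Theorem on TTD factorization together with its two supporting lemmas --- remains open in your argument.
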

  
This result allows us to prove in section \ref{sec:cotame} that various classes of maps, including $3$-triangular maps, are \cotame.  We first give proofs of Bodnarchuk's results in section \ref{sec:known} (including a new proof of the biparabolic case), and prove Theorem \ref{mt1} in section \ref{ss:3tri}.  We also show that any exponential of a multiple of a triangular derivation is \cotame; this class of maps includes the Nagata map.  We reserve the final section for our construction of a $4$-triangular map that is not \cotame; the reader is advised to familiarize themselves with the notations and techniques from \cite{EL} before reading this section, which can be read independently of the rest of this paper.

\subsection{Polynomial Automorphisms}\label{sec:pa}

In this section, $n\ge 1$ is an integer and $R$ is an integral domain.\\

We denote by $R^*$ the group of units of $R$ and  we use $R^{[n]}=R[x_1,\ldots,x_n]$ for the polynomial ring in $n$ variables.
We denote by $\GA_n(R)$ the group of polynomial automorphisms of $\Spec R^{[n]}$ over $\Spec R$.   
This group is anti-isomorphic to the group of $R$-automorphisms of $R^{[n]}$ (some authors define it as such).  We freely abuse this correspondence and, given $\phi\in \GA_n(R)$ and $P \in R^{[n]}$, we denote by $(P)\phi$ the image of $P$ by the automorphism of $R^{[n]}$ corresponding to $\phi$.  Given $\phi, \psi \in \GA_n(R)$ and $P \in R^{[n]}$, we thus have the natural composition $(P)(\phi \psi) = ((P)\phi)\psi$.  We refer the reader to \cite{vdE} for a comprehensive reference on polynomial automorphisms.

The general automorphism group $\GA_n(R)$ has several important subgroups; the following definitions are fairly standard (cf. \cite{Wright2}), but reproduced here for clarity and the convenience of the reader.
\begin{itemize}

\item $\Af_n(R)$ is the affine subgroup, consisting of all automorphisms whose components all have degree one and
$\GL_n(R)\subset \Af_n(R)$ is the linear subgroup, consisting of affine automorphisms whose components have constant term equal to zero.

\item $\BA_n(R)$ is the subgroup of (lower) triangular automorphisms; that is, those of the form $$(u_1x_1+P_1,u_2x_2+P_2(x_1), \ldots, u_nx_n+P_n(x_1,\ldots,x_{n-1}))$$ for some $u_i \in R^*$ and $P_i \in R[x_1, \ldots, x_{i-1}]$.

\item $\TA_n(R) = \langle \Af_n(R), \BA_n(R) \rangle $ is the tame subgroup.  It is a classical theorem of Jung \cite{Jung} that $\TA_2(\Ik) = \GA_2(\Ik)$, while a very deep result of Shestakov and Umirbaev \cite{SU} is that $\TA_3(\Ik) \neq \GA_3(\Ik)$.  The so-called tame generators problem remains open in higher dimensions.

\item $\mathfrak{S}_n$ is the symmetric group on the $n$ symbols $x_1,\ldots,x_n$.
 Given two integers $1\le r,s\le n$ such that $r\ne s$, we denote by 
$(x_r\leftrightarrow x_s)\in\SG_n$ the transposition exchanging $x_r$ and $x_s$.

\item $\Tr_{n,1}(R)$ is the subgroup of translations on the first component, that is the subgroup of automorphisms of the form $\theta_{1,c}:=(x_1+c,x_2,\ldots,x_n)$ for some $c\in R$.

\item $\Tr_{n,r}(R)$ (where $1\le r\le n$) is the subgroup of translations the $r$-th component, that is the subgroup of automorphisms of the form $\theta_{r,c}:=\pi\theta_{1,c}\pi$ for some $c\in R$ where  $\pi=(x_1\leftrightarrow x_r)$. 
In particular, $\Tr_{n,n}(R)$ is the subgroup of translations on the last component, 
that is the subgroup of automorphisms of the form $\theta_{n,c}:=(x_1,\ldots,x_{n-1},x_n+c)$ for some $c\in R$.

\item $\Tr_n(R)$ is the subgroup of all translations, that is the subgroup of automorphisms of the form 
$\theta_{1,c_1}\cdots\theta_{n,c_n}=(x_1+c_1, \ldots,x_n+c_n)$ for some $c_1,\ldots,c_n \in R$.
\end{itemize}

For an $R$-algebra $A$, we use $\LND _R A$ to denote the set of locally  nilpotent $R$-derivations of $A$. 
\begin{itemize}
\item If $D \in \LND _R A$ and $f \in \ker D$, then $fD\in \LND_R A$ as well.
\item Given $D \in \LND _R R^{[n]}$, we denote the exponential of $D$ by $\exp(D) \in \GA_n(R)$, given by $(x_r)(\exp (D)) = \sum _{i=0} \frac{1}{i!} D^i(x_r)$.
\item $D \in \LND_R R^{[n]}$ is called {\em triangular} if $D(x_i) \in R^{[i-1]}$. If $D$ is triangular, then $\exp(D) \in \BA_n(R)$.
\end{itemize}

\subsection{Parabolic Automorphisms}\label{sec:para}

In this section $n\ge 1$ is an integer.

\begin{definition}\label{def:para}
The \textit{parabolic subgroup of} $\GA_n(\K)$ is the normalizer  of $\Tr_{n,n}(\K)$
in $\GA_n(\K)$:
$$\PA_n(\K)=\{\phi\in\GA_n(\K)\,|\,\phi\,\Tr_{n,n}(\K)=\Tr_{n,n}(\K)\,\phi\}.$$
The elements of $\PA_n(\K)$ are called \textit{parabolic automorphisms}.
\end{definition}

In order to prove a characterization of parabolic automorphisms in Lemma \ref{lem:para}, we make the following definition.

\begin{definition}\label{def:partial}
Given $1\le r\le n$, we denote by $\Delta_r:\K^{[n]}\to\K^{[n]}$ the finite partial derivative defined by 
$\Delta_r(P)=(P)\theta_{r,1}-P$ for all $P\in\K^{[n]}$.
\end{definition}

The following lemma is classical and easy to prove via Taylor's theorem (using that $\K$ is a field of characteristic zero).

\begin{lemma}\label{lem:partial}
Given $1\le r\le n$, $a\in\K$ and $P\in\K^{[n]}$, we have:\\
1) $\Delta_r(P)=a\Leftrightarrow P-ax_r\in\K[x_1,\ldots,x_{r-1},x_{r+1},\ldots,x_n]$,\\
2) if $\deg_{x_r}(P)=d\ge 1$ then
$$\Delta_r(P)=\sum_{i=0}^{d-1}\frac{1}{i!}\frac{\partial^iP}{\partial x_r^i}.$$
In particular, $\deg_{x_r}(\Delta_r(P))=d-1$.
\end{lemma}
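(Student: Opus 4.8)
The plan is to deduce both parts from the finite Taylor expansion of $P$ in the single variable $x_r$, treating the other variables as constants. Since $\K$ has characteristic zero and $\deg_{x_r}(P)=d$, Taylor's theorem applied to the substitution $x_r\mapsto x_r+1$ gives
\[
(P)\theta_{r,1}=\sum_{i=0}^{d}\frac{1}{i!}\frac{\partial^i P}{\partial x_r^i},
\]
the sum being finite because all derivatives of order exceeding $d$ vanish. Subtracting the $i=0$ term $P$ then yields
\[
\Delta_r(P)=\sum_{i=1}^{d}\frac{1}{i!}\frac{\partial^i P}{\partial x_r^i},
\]
which is the expansion asserted in part 2.

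For the degree claim, I would isolate the leading summand: the term $i=1$ is $\partial P/\partial x_r$, whose $x_r$-degree is exactly $d-1$, while every summand with $i\ge 2$ has strictly smaller $x_r$-degree. Thus $\deg_{x_r}(\Delta_r(P))=d-1$, provided no cancellation occurs in the leading coefficient. This is precisely the point where the characteristic-zero hypothesis is essential, and it is really the only step requiring care: if $c$ is the leading coefficient of $P$ viewed as a polynomial in $x_r$, then the leading coefficient of $\partial P/\partial x_r$ is $d\,c$, which is nonzero exactly because $d$ is invertible in $\K$.

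For part 1, the implication $(\Leftarrow)$ is a direct computation: if $P=ax_r+Q$ with $Q$ independent of $x_r$, then $\Delta_r(P)=\Delta_r(ax_r)+\Delta_r(Q)=a+0=a$, using $\Delta_r(x_r)=1$ and that $Q$ is fixed by $\theta_{r,1}$. For $(\Rightarrow)$, I would set $Q=P-ax_r$ and exploit the $\K$-linearity of $\Delta_r$ to get $\Delta_r(Q)=\Delta_r(P)-a\,\Delta_r(x_r)=a-a=0$. It then remains to show that $\Delta_r(Q)=0$ forces $Q$ to be free of $x_r$, and this follows immediately from part 2: if we had $\deg_{x_r}(Q)=e\ge 1$, then $\deg_{x_r}(\Delta_r(Q))=e-1\ge 0$, contradicting $\Delta_r(Q)=0$. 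Hence $Q\in\K[x_1,\ldots,x_{r-1},x_{r+1},\ldots,x_n]$, which is the asserted membership.

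Since every step is a short manipulation of the Taylor expansion together with the linearity of $\Delta_r$, I do not anticipate any substantive obstacle. The one genuinely necessary ingredient is the characteristic-zero assumption, which guarantees the non-vanishing of the leading coefficient $d\,c$ and hence the exactness of the degree drop in part 2; part 1 is then a formal consequence of that degree statement.
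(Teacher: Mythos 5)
Your proof is correct and is exactly the argument the paper intends: the paper gives no written proof, remarking only that the lemma is ``classical and easy to prove via Taylor's theorem,'' which is precisely the route you take for part 2 and then for part 1 via linearity and the degree drop. One point worth flagging: your (correct) expansion $\Delta_r(P)=\sum_{i=1}^{d}\frac{1}{i!}\frac{\partial^iP}{\partial x_r^i}$ differs from the formula printed in the statement, whose summation bounds $i=0$ to $d-1$ appear to be an indexing typo (for $P=x_r$ the printed sum would give $x_r$ rather than $\Delta_r(x_r)=1$); everything the paper subsequently uses --- part 1 and the fact that $\deg_{x_r}(\Delta_r(P))=d-1$ --- follows from your corrected version.
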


\begin{lemma}\label{lem:para} 
Let $\phi\in\GA_n(\K)$ be an automorphism. The three following properties are equivalent:\\
$(i)$ $\phi\in\PA_n(\K)$,\\
$(ii)$ $\phi\Tr_{n,n}(\K)\phi^{-1}\subset\Tr_{n,n}(\K)$,\\
$(iii)$ there exist $a\in\K^*$ and $P_1,\ldots,P_n\in\K^{[n-1]}$ such that 
$$\phi=(P_1,\ldots,P_{n-1},ax_n+P_n)=(P_1,\ldots,P_{n-1},x_n)(x_1,\ldots,x_{n-1},ax_n+P_n).$$
\end{lemma}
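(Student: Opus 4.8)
The plan is to establish the cycle $(i)\Rightarrow(ii)\Rightarrow(iii)\Rightarrow(i)$. The implication $(i)\Rightarrow(ii)$ is immediate: the defining equality $\phi\,\Tr_{n,n}(\K)=\Tr_{n,n}(\K)\,\phi$ gives $\phi\,\Tr_{n,n}(\K)\,\phi^{-1}=\Tr_{n,n}(\K)$, which in particular is contained in $\Tr_{n,n}(\K)$. For $(iii)\Rightarrow(i)$ I would argue directly from the composition rule $(P)(\alpha\beta)=((P)\alpha)\beta$: writing $\phi=(P_1,\dots,P_{n-1},ax_n+P_n)$ with the $P_i\in\K^{[n-1]}$ and $a\in\K^*$, a one-line computation shows $(x_j)(\phi\,\theta_{n,c})=(x_j)(\theta_{n,ac}\,\phi)$ for every $j$ and every $c\in\K$. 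Indeed, applying $\theta_{n,c}$ after $\phi$ only sends the last component $ax_n+P_n$ to $ax_n+P_n+ac$ and leaves the first $n-1$ components (which lie in $\K^{[n-1]}$) untouched, which is exactly the effect of precomposing $\phi$ with $\theta_{n,ac}$. Thus $\phi\,\theta_{n,c}=\theta_{n,ac}\,\phi$, and since $a\ne0$ the scalar $ac$ runs over all of $\K$ as $c$ does; hence $\phi\,\Tr_{n,n}(\K)=\Tr_{n,n}(\K)\,\phi$, i.e.\ $\phi\in\PA_n(\K)$.

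The substance lies in $(ii)\Rightarrow(iii)$. Writing $\phi=(Q_1,\dots,Q_n)$ and $\phi^{-1}=(y_1,\dots,y_n)$, so that $Q_j(y_1,\dots,y_n)=x_j$, I would first record the identity furnished by the composition rule,
$$(x_j)\bigl(\phi\,\theta_{n,c}\,\phi^{-1}\bigr)=Q_j(y_1,\dots,y_{n-1},y_n+c)\qquad(1\le j\le n,\ c\in\K).$$
Hypothesis $(ii)$ means that for each $c$ there is a scalar $c'=c'(c)\in\K$ with $\phi\,\theta_{n,c}\,\phi^{-1}=\theta_{n,c'}$; comparing coordinates, the right-hand side above equals $x_j$ for $j<n$ and $x_n+c'$ for $j=n$. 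The idea is then to promote the parameter to an indeterminate: set $G_j(t):=Q_j(y_1,\dots,y_{n-1},y_n+t)\in\K^{[n]}[t]$, which satisfies $G_j(0)=x_j$.

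For $j<n$, the polynomial $G_j(t)-x_j$ vanishes at every $c\in\K$; since $\K$ is infinite (it has characteristic zero), $G_j(t)\equiv x_j$. Applying $\phi$ (which substitutes $y_k\mapsto x_k$) turns this into $Q_j(x_1,\dots,x_{n-1},x_n+t)=Q_j$, so $Q_j$ is independent of $x_n$, i.e.\ $Q_j\in\K^{[n-1]}$. For $j=n$, the polynomial $G_n(t)-x_n$ instead takes a (possibly $c$-dependent) scalar value at every $c\in\K$; evaluating at $N+1$ distinct points, where $N=\deg_t G_n$, and inverting the resulting Vandermonde system over $\K$ forces every coefficient of $G_n(t)-x_n$ to lie in $\K$. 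Applying $\phi$ converts this into $Q_n(x_1,\dots,x_{n-1},x_n+t)-Q_n\in\K[t]$, whose Taylor expansion in $x_n$ shows $\partial Q_n/\partial x_n=a$ for some $a\in\K$; hence $Q_n=ax_n+P_n$ with $P_n\in\K^{[n-1]}$. Finally $a\ne0$, for otherwise every component of $\phi$ would lie in $\K^{[n-1]}$ and $\phi$ could not be surjective. This is precisely form $(iii)$, closing the cycle.

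The main obstacle is exactly this passage from the family of scalar conditions ``$\phi\,\theta_{n,c}\,\phi^{-1}$ is a translation for every $c$'' to the rigid normal form $(iii)$: a priori the conjugating scalar $c'$ depends on $c$ in an uncontrolled way. The device that dissolves this is to treat the translation parameter as an indeterminate $t$ and exploit that, over the infinite field $\K$, a polynomial in $t$ with infinitely many roots vanishes (the case $j<n$), while one taking scalar values at infinitely many points has scalar coefficients (the case $j=n$, via Vandermonde). Once the shape of $\phi$ is pinned down the remainder is routine bookkeeping with the composition rule; one could alternatively organize the $j=n$ step through the finite difference operator $\Delta_n$ and Lemma \ref{lem:partial}, but the version above seems most transparent.
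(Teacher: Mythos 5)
Your proof is correct, and it follows the same cycle $(i)\Rightarrow(ii)\Rightarrow(iii)\Rightarrow(i)$ as the paper, with essentially identical arguments for $(i)\Rightarrow(ii)$ and $(iii)\Rightarrow(i)$. The one place you diverge is the mechanism for $(ii)\Rightarrow(iii)$: the paper conjugates by the \emph{single} translation $\theta_{n,1}$, reads off $\Delta_n(Q_i)=0$ for $i<n$ and $\Delta_n(Q_n)=a$, and concludes at once from Lemma~\ref{lem:partial}, whose second part shows that in characteristic zero one finite difference already controls $\deg_{x_n}$. You instead use the entire one-parameter family of translations, promote the parameter to an indeterminate $t$, and recover the coefficients by a Vandermonde/interpolation argument over the infinite field $\K$. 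Both are valid: the paper's route is shorter because Lemma~\ref{lem:partial} does exactly the work of your interpolation step, while yours is marginally more robust in that this step needs only that $\K$ is infinite rather than of characteristic zero --- a trade-off you essentially acknowledge in your closing remark. Your justification that $a\neq 0$ (surjectivity of $\phi$ would fail if all components lay in $\K^{[n-1]}$) is a correct substitute for the paper's implicit observation that conjugation cannot send $\theta_{n,1}\neq\mathrm{id}$ to the identity.
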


\begin{proof}\ \\
$(i)\Rightarrow(ii)$: This is obvious.\\
$(ii)\Rightarrow(iii)$: Let $\phi=(Q_1,\ldots,Q_n)\in\GA_n(\K)$ satisfying $(ii)$.
Then there exists $a\in\K^*$ such that $\phi\,\theta_{n,1}\,\phi^{-1}=\theta_{n,a}$.
This implies $\Delta_n(Q_i)=0$ for all $1\le i\le n-1$ and
$\Delta_n(Q_n)=a$. We deduce that $Q_1,\ldots,Q_{n-1},Q_n-ax_n\in\K^{[n-1]}$ using Lemma~\ref{lem:partial}.\\
$(iii)\Rightarrow(i)$: If $\phi$ is as in $(iii)$ then
$\phi\,\theta_{n,c}\,\phi^{-1}=\theta_{n,ac}$ and $\phi^{-1}\,\theta_{n,c}\,\phi=\theta_{n,c/a}$ for all $c\in\Ik$
and we deduce that $\phi$ is a parabolic automorphism.
\end{proof}

\begin{remark}
Lemma~\ref{lem:para} implies that $\PA_n(\K)$ is the semi-direct product of the subgroup
$$\{(P_1,\ldots,P_{n-1},x_n)\in\GA_n(\K)\,|\,P_1\ldots,P_{n-1}\in\K^{[n-1]}\},$$ which we
identify with $\GA_{n-1}(\K)$, and $$\{(x_1,\ldots,x_{n-1},ax_n+P_n)\in\GA_n(\K)\,|\,a\in\K^*,P_n\in\K^{[n-1]}\},$$
which is a subgroup of $\BA_n(\K)$ that we identify with $\BA_1(\K^{[n-1]})$. 
\end{remark}

\begin{remark}
When $1\le n\le 3$, we have $\PA_n(\K)\subset\TA_n(\K)$. 
Nevertheless, we don't know if all parabolic automorphisms are tame when $n\ge 4$.
\end{remark}

\section{Families of cotame automorphisms}\label{sec:cotame}
Throughout this section, $n\ge 3$ is an integer.\\

In this section, we prove that various classes of maps are \cotame, including $3$-triangular maps in section \ref{ss:3tri}.  Some of these proofs rely on Theorem \ref{thm:tdcotame}, which we prove in section \ref{sec:td}.  

\begin{definition}\label{def:cotame}\ \\
1) An automorphism $\phi\in\GA_n(\Ik)$ is said to be \textit{co-tame} if $\TA_n(\Ik)\subset\langle f,\Af_n(\Ik)\rangle$.\\
2) Given $\phi,\psi\in\GA_n(\Ik)$, we write $\phi\simeq\psi$ when $\phi$ and $\psi$ are \textit{affinely equivalent},
i.e., if there exist $\alpha_1,\alpha_2\in\Af_n(\Ik)$ such that $\phi=\alpha_1\psi\alpha_2$.
It follows immediately that if $\phi\simeq\psi$, then  $\phi$ is co-tame if and only if $\psi$ is co-tame.
\end{definition}

\subsection{Known results}\label{sec:known}

In this section, we briefly discuss the known results on families of automorphisms that are \cotame (the reader familiar with this history may skip ahead to section \ref{ss:3tri}).  The first example of a \cotame automorphism was produced by Derksen (unpublished, but see \cite{vdE} Theorem 5.2.1 for a proof).
\begin{theorem}[Derksen, 1997]\label{thm:derksen}
The automorphism $(x_1+x_2^2,x_2,\ldots,x_n)\in\BA_n(\Ik)$ is  \cotame.
\end{theorem}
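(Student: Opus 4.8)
The plan is to work inside the group $G:=\langle\phi,\Af_n(\Ik)\rangle$ and show directly that $G\supseteq\TA_n(\Ik)$. Since $\TA_n(\Ik)$ is generated by $\Af_n(\Ik)$ together with all elementary shears, and every elementary shear is affinely equivalent (via a coordinate transposition, which is affine) to one of the form $\ep_f:=(x_1+f,x_2,\ldots,x_n)$ with $f\in\Ik[x_2,\ldots,x_n]$, it suffices to prove that $\ep_f\in G$ for every such $f$. I therefore introduce the set $S:=\{f\in\Ik[x_2,\ldots,x_n]\,:\,\ep_f\in G\}$ and aim to show $S=\Ik[x_2,\ldots,x_n]$.

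First I would record the algebraic structure of $S$. Because shears on the first coordinate commute and satisfy $\ep_f\ep_g=\ep_{f+g}$, the set $S$ is closed under addition; conjugating $\ep_f$ by the linear scaling $(cx_1,x_2,\ldots,x_n)\in\GL_n(\Ik)$ produces $\ep_{cf}$, so $S$ is in fact a $\Ik$-linear subspace. Conjugating $\ep_f$ by a block-linear map $(x_1,L)$ with $L\in\GL_{n-1}(\Ik)$ acting on $x_2,\ldots,x_n$ yields $\ep_{f\circ L}$, so $S$ is invariant under invertible linear substitutions of $x_2,\ldots,x_n$. Finally, affine shears already give every $f$ of degree $\le 1$, and the hypothesis $\phi=\ep_{x_2^2}$ gives $x_2^2\in S$.

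The main work is an induction on $d$ showing that $S$ contains every polynomial of degree $\le d$. Here I would exploit the fact that, since $\operatorname{char}\Ik=0$, the $k$-th powers of linear forms span the homogeneous polynomials of degree $k$; combined with the $\GL_{n-1}(\Ik)$-invariance and the vector-space structure of $S$, this reduces the inductive step to producing a single pure power $x_2^{d+1}\in S$. The hard part is precisely this degree increase: the naive move of conjugating $\ep_{x_2^2}$ by a high-degree shear squares the perturbation and overshoots the target degree, so the square term (of degree $2d$) cannot be cleared using the inductive hypothesis. The trick I would use instead is to conjugate the \emph{quadratic} shear $\ep_{x_2x_3}\in S$ by $\psi:=(x_1,x_2,x_3+x_2^d,x_4,\ldots,x_n)$, which lies in $G$ because it is affinely equivalent to $\ep_{x_2^d}$ and $x_2^d\in S$ by the inductive hypothesis. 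A direct computation gives $\psi^{-1}\ep_{x_2x_3}\psi=\ep_{x_2x_3+x_2^{d+1}}$, and multiplying by $\ep_{-x_2x_3}$ yields $x_2^{d+1}\in S$; note this step genuinely uses $n\ge 3$, matching the standing hypothesis of the section.

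Carrying out the induction then gives $S=\Ik[x_2,\ldots,x_n]$, so $G$ contains every shear $\ep_f$, hence every elementary shear, and therefore all of $\TA_n(\Ik)$; thus $\phi$ is \cotame. I expect the only delicate verifications to be the two conjugation identities (both routine, once the right-action composition convention of Section~\ref{sec:pa} is respected) and the standard linear-algebra fact that powers of linear forms span each homogeneous component in characteristic zero.
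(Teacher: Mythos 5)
Your argument is correct: the subspace/polarization structure of $S$, the reduction to pure powers $x_2^{d+1}$, and the degree-raising conjugation $\psi^{-1}\ep_{x_2x_3}\psi=\ep_{x_2x_3+x_2^{d+1}}$ (which is where $n\ge 3$ enters) all check out. The paper itself gives no proof of this theorem --- it only cites \cite{vdE}, Theorem 5.2.1 --- and your argument is essentially that standard one, so there is nothing to compare beyond noting that you have supplied the omitted proof correctly.
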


Subsequently, Bodnarchuk showed that all triangular and parabolic automorphisms are either affine or \cotame; we are unaware of proofs appearing in English in the literature, so we include proofs of these two statements here.
\begin{theorem}[Bodnarchuk, 2002]\label{thm:triangular}
A triangular automorphism $\tau \in \BA_n(\Ik)$ is either affine or \cotame.
\end{theorem}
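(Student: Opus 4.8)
The plan is to deduce the result directly from Theorem \ref{thm:tdcotame} by showing that \emph{every} triangular automorphism is translation degenerate in its last variable. Concretely, I would fix the index $r=n$ and examine the conjugates $\tau^{-1}\theta_{n,c}\tau$ for $c\in\Ik$; the goal is to show that all of these are affine, so that Theorem \ref{thm:tdcotame} applies verbatim and yields that $\tau$ is affine or \cotame.

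Write $\tau=(u_1x_1+P_1,\ldots,u_nx_n+P_n)\in\BA_n(\Ik)$ with $u_i\in\Ik^*$ and $P_i\in\Ik[x_1,\ldots,x_{i-1}]$. The first observation is that $\tau^{-1}$ is again lower triangular, with last component $u_n^{-1}x_n+R_n(x_1,\ldots,x_{n-1})$. Since the translation $\theta_{n,c}$ affects only the last coordinate, while the first $n-1$ components of both $\tau$ and $\tau^{-1}$ are independent of $x_n$, a direct computation collapses the conjugate to a single translation: $\tau^{-1}\theta_{n,c}\tau=\theta_{n,c'}$ for a suitable $c'\in\Ik$ (explicitly $c'=u_n^{-1}c$). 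In particular $\tau^{-1}\theta_{n,c}\tau\in\Af_n(\Ik)$ for every $c\in\Ik$, which is exactly the hypothesis of Theorem \ref{thm:tdcotame} with $r=n$. That theorem then gives that $\tau$ is affine or \cotame, as desired.

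The only genuine computation to check is this collapse, which is immediate from the triangular form of $\tau$ and $\tau^{-1}$: the last coordinate of $\tau^{-1}$ absorbs the shift $+c$ linearly, while every lower coordinate is unchanged because it does not involve $x_n$. The subtler issue is logical rather than computational: one must ensure that the proof of Theorem \ref{thm:tdcotame} does not itself invoke co-tameness of triangular maps, on pain of circularity. I expect this to be fine, since the natural proof of Theorem \ref{thm:tdcotame} reduces a non-affine translation degenerate map, through the affine group, to Derksen's map (Theorem \ref{thm:derksen}) without passing through the general triangular case.

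If one instead wants a proof independent of Theorem \ref{thm:tdcotame} (closer to Bodnarchuk's original argument), the fallback is an induction on $\deg\tau$: for a suitable variable $x_j$ and scalar $c$, the conjugate $\widetilde\tau=\tau^{-1}\theta_{j,c}\tau\in\langle\tau,\Af_n(\Ik)\rangle$ is again triangular but of strictly smaller degree, and co-tameness of $\widetilde\tau$ forces co-tameness of $\tau$; one then iterates until reaching an automorphism affinely equivalent to Derksen's map. The main obstacle here is the degree bookkeeping: as two-shear examples show, the degree need not drop for every choice of $j$ (indeed $\tau^{-1}$ may have larger degree than $\tau$), so one must choose the variable and the degree invariant carefully, and the irreducible base case---a quadratic shear---is itself translation degenerate, which is precisely why Theorem \ref{thm:tdcotame} gives the cleanest route.
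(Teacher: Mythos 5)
Your computation is correct: writing $(x_n)\tau=u_nx_n+P_n$ with $P_n\in\Ik^{[n-1]}$, one does get $\tau^{-1}\theta_{n,c}\tau=\theta_{n,u_n^{-1}c}$, so every triangular automorphism is parabolic (this is Lemma \ref{lem:para}) and in particular translation degenerate in $x_n$. The problem is the step where you then cite Theorem \ref{thm:tdcotame}: in this paper that argument is circular. The proof of Theorem \ref{thm:tdcotame} reduces, via Theorem \ref{thm:tdfactor}, to Lemmas \ref{lem:nonaffine} and \ref{lem:3trispecial}, and the proof of Lemma \ref{lem:3trispecial} invokes Theorem \ref{thm:parabolic} repeatedly (in the opening reduction and in Cases 1 and 2); Theorem \ref{thm:parabolic} is in turn proved by reducing a parabolic map to a triangular one and citing Theorem \ref{thm:triangular} --- precisely the statement you are trying to prove. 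So your hope that the proof of Theorem \ref{thm:tdcotame} ``does not pass through the general triangular case'' is not borne out; the dependency chain is \ref{thm:tdcotame} $\to$ \ref{lem:3trispecial} $\to$ \ref{thm:parabolic} $\to$ \ref{thm:triangular}. (Citing biparabolicity instead does not help, since the paper's proof of Theorem \ref{thm:biparabolic} also runs through Theorems \ref{thm:tdcotame} and \ref{thm:parabolic}.) This is why the paper proves Theorem \ref{thm:triangular} first, from Derksen's theorem alone, and only afterwards builds the translation-degenerate machinery on top of it.

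Your fallback is the right idea and is essentially the paper's proof, but as written it stops exactly where the work begins, and the one concrete mechanism you propose --- iterating $\tau\mapsto\tau^{-1}\theta_{j,c}\tau$ for translations $\theta_{j,c}$ --- is not sufficient on its own: such iterations can terminate in an affine map before reaching a quadratic shear (this is exactly the translation-degeneracy obstruction you identify at the end). Two further ingredients are needed. First, a non-translation affine conjugation: if $\deg P_r\ge 2$ for some $r\le n-1$, then with $\alpha=(x_1,\ldots,x_{n-1},x_n+x_r)$ one has $\tau^{-1}\alpha\tau=(x_1,\ldots,x_{n-1},x_n+P_r)$, so one may assume $\tau$ is a single shear in the last variable; every subsequent reduction preserves this shape, so the process can only halt at a nonaffine quadratic shear. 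Second, the elimination of the variables $x_{n-1},\ldots,x_2$ from $P$ uses commutators $\delta^{-1}\tau^{-1}\delta\tau$ with $\delta=(x_1,\ldots,x_{s-1},x_s+x_1,x_{s+1},\ldots,x_n)$, inducting downward on $\deg_{x_s}P$ while keeping $\deg P\ge 2$; only once $P\in\Ik[x_1]$ does one switch to the translation commutator $\theta_{1,1}^{-1}\tau^{-1}\theta_{1,1}\tau$, which replaces $P$ by $-\Delta_1(P)$ and lowers $\deg P$ by exactly one until $\deg P=2$, i.e., Derksen's map. Supplying these choices and the accompanying degree estimates is the actual content of the proof.
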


\begin{proof}
Note that without loss of generality, we may assume that the affine part of $\tau$ is the identity
(because $\tau$ is affinely equivalent to an automorphism of this form).  
We assume that $\tau$ is not affine and we write $(x_i)\tau=x_i+P_i(x_1,\ldots,x_{i-1})$ 
with $P_i\in\K^{[i-1]}$ for each $1 \leq i \leq n$.  
First, suppose $\deg P_r \ge 2$ for some $1 \leq r \leq n-1$.  
Then we consider $\alpha = (x_1,\ldots,x_{n-1},x_n+x_r) \in \Af_n(\Ik)$ 
and we observe that $\tau ^{-1} \alpha \tau = (x_1,\ldots,x_{n-1},x_n+P_r)$.

So we may now assume that $\tau = (x_1,\ldots,x_{n-1},x_n+P)$ for some $P \in \Ik^{[n-1]}$ with $\deg P \geq 2$
(since $\tau^{-1}\alpha\tau$ being \cotame implies $\tau$ is \cotame).
Now, let $1 \leq s \leq n-1$ be such that $P \in \Ik^{[s]} \pri\Ik^{[s-1]}$.  
We assume $s>1$, and will return to the $s=1$ case (i.e., $P \in \Ik[x_1]\pri \Ik$) momentarily.  
Set $\delta  = (x_1,\ldots, x_{s-1},x_s+x_1,x_{s+1},\ldots,x_n) \in \Af_n(\Ik)$.  
Then one easily computes that 
$\delta^{-1}\tau^{-1}\delta\tau =(x_1,\ldots,x_{n-1},x_n+P_0)$ 
where $P_0\in \Ik^{[n-1]}$ with $\deg P_0 = \deg P \ge 2$, and $\deg _{x_s} P_0 < \deg _{x_s} P$.  
Inducting downwards on $\deg _{x_s} P$, we may assume $P \in \Ik^{[s-1]}$ with $\deg P \ge 2$.  
Inducting downwards on $s$, we may assume further that $P \in \Ik[x_1] \pri \Ik$.  

Now, let $\theta =\theta_{1,1}=(x_1+1,x_2,\ldots,x_n)\in\Tr_{n,1}(\Ik)$, 
and note that $\theta ^{-1} \tau ^{-1} \theta \tau = (x_1,\ldots,x_{n-1},x_n+P_0)$ 
with $P_0 =-\Delta_1(P) \in \Ik[x_1]$ and $\deg P_0 = \deg P - 1$.  Inducting downwards again on $\deg P$, we may assume $\deg P = 2$, in which case $\tau$ is affinely equivalent to Derksen's map $(x_1+x_2^2,x_2,\ldots,x_n)$, and is therefore co-tame by Theorem~\ref{thm:derksen}.
\end{proof}

\begin{theorem}[Bodnarchuk, 2002]\label{thm:parabolic}
A parabolic automorphism $\psi \in \PA_{n}(\Ik)$ is either affine or \cotame.
\end{theorem}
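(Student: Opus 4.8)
The plan is to reduce the statement to the already-established triangular case (Theorem~\ref{thm:triangular}) by conjugating a carefully chosen affine shear by $\psi$, in close analogy with the first reduction in the proof of Theorem~\ref{thm:triangular}. First I would invoke Lemma~\ref{lem:para} to write $\psi=(P_1,\ldots,P_{n-1},ax_n+P_n)$ with $a\in\K^*$, $P_n\in\K^{[n-1]}$, and lower part $\sigma:=(P_1,\ldots,P_{n-1})\in\GA_{n-1}(\K)$; the point to keep in mind is that every $P_i$ with $i\le n-1$, as well as $P_n$, lies in $\K^{[n-1]}$ and so involves only $x_1,\ldots,x_{n-1}$. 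I would then split into two cases according to whether $\sigma$ is affine.

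If every $P_r$ with $1\le r\le n-1$ has degree at most one, then the factorization furnished by Lemma~\ref{lem:para}, namely $\psi=(P_1,\ldots,P_{n-1},x_n)(x_1,\ldots,x_{n-1},ax_n+P_n)$, exhibits $\psi$ as the product of an affine automorphism and the triangular automorphism $\tau=(x_1,\ldots,x_{n-1},ax_n+P_n)\in\BA_n(\K)$. Hence $\psi\simeq\tau$ in the sense of Definition~\ref{def:cotame}, and the conclusion follows at once from Theorem~\ref{thm:triangular}.

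The substantive case is when some component $P_r$ with $r\le n-1$ has degree at least two (so that $\sigma$, and therefore $\psi$, is non-affine). Here I would set $\alpha_r=(x_1,\ldots,x_{n-1},x_n+x_r)\in\Af_n(\Ik)$ and compute the conjugate $\psi^{-1}\alpha_r\psi$. The key identity, which I expect to be the crux of the argument, is that this conjugate equals the triangular map $(x_1,\ldots,x_{n-1},x_n+a^{-1}P_r)$; one checks it most painlessly in the equivalent form $\alpha_r\psi=\psi\,(x_1,\ldots,x_{n-1},x_n+a^{-1}P_r)$, verifying coordinatewise that both sides equal $(P_1,\ldots,P_{n-1},ax_n+P_n+P_r)$, where the simplifications come precisely from the fact that each $P_i$ ($i\le n-1$) and $P_n$ belong to $\K^{[n-1]}$ and are thus fixed by any automorphism acting only on $x_n$. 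Since $\deg P_r\ge 2$, the resulting conjugate is a non-affine triangular automorphism, hence \cotame by Theorem~\ref{thm:triangular}.

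Finally I would close the loop: because $\alpha_r\in\Af_n(\Ik)$, the conjugate $\psi^{-1}\alpha_r\psi$ lies in $\langle\psi,\Af_n(\Ik)\rangle$, so its co-tameness forces $\TA_n(\Ik)\subseteq\langle\psi,\Af_n(\Ik)\rangle$, i.e.\ $\psi$ is \cotame. The main obstacle is really just locating the correct shear $\alpha_r$ so that the conjugation collapses to a map modifying only the last coordinate by $a^{-1}P_r$; once it is found the degree bookkeeping is routine. I note in particular that nothing in the argument requires $\psi$ or $\sigma$ to be tame, so the case $n\ge 4$ in which $\sigma$ may be a wild automorphism of $\A^{n-1}$ presents no additional difficulty.
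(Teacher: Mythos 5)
Your proposal is correct and follows essentially the same route as the paper: reduce via Lemma~\ref{lem:para} to the case where some $P_r$ ($r\le n-1$) has degree at least two, then conjugate the shear $(x_1,\ldots,x_{n-1},x_n+x_r)$ by $\psi$ to land in $\BA_n(\Ik)\setminus\Af_n(\Ik)$ and invoke Theorem~\ref{thm:triangular}. The only (immaterial) difference is that the paper uses the shear $(x_1,\ldots,x_{n-1},x_n+ax_r)$, which makes the conjugate come out as $(x_1,\ldots,x_{n-1},x_n+P_r)$ rather than your $(x_1,\ldots,x_{n-1},x_n+a^{-1}P_r)$; both computations are valid.
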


\begin{proof}
Using Lemma~\ref{lem:para}, we write $\psi=(P_1,\ldots,P_{n-1},ax_n+P_n)$ 
with $a\in\K^*$ and $P_1,\ldots,P_n\in\K^{[n-1]}$.
If  $(P_1,\ldots,P_{n-1})\in \Af_{n-1}(\Ik)$, then $\psi$ is affinely equivalent to
$(x_1,\ldots,x_{n-1},ax_n+P_n)\in\BA_n(\K)$, 
so $\psi$ is either affine or co-tame by Theorem~\ref{thm:triangular}.  
Otherwise, there exists $1 \leq r \leq n-1$ with $\deg(P_r)\ge 2$.  
We consider $\alpha = (x_1,\ldots,x_{n-1},x_n+ax_r) \in \Af_n(\Ik)$ and we have: 
$$\psi ^{-1} \alpha \psi =(x_1,\ldots,x_{n-1},x_n+P_r) \in \BA_n(\Ik)\pri\Af_n(\K).$$  
Again Theorem \ref{thm:triangular} implies $\psi ^{-1} \alpha \psi$ and then $\psi$ are co-tame.
\end{proof}
\begin{remark}
This immediately implies that every automorphism is {\em stably \cotame}, as the addition of a variable makes the map parabolic.  The question of stable co-tameness is more intricate over a field of positive characteristic; see \cite{K}.
\end{remark}

Bodnarchuk later showed that all biparabolic maps are \cotame; we present a new proof using the fact that translation degenerate maps (see Definition \ref{def:td}) are either affine or \cotame (Theorem \ref{thm:tdcotame}). 
\begin{theorem}[Bodnarchuk, 2005] \label{thm:biparabolic} 
Let $\psi _1, \psi _2 \in \PA_n(\Ik)$ and let $\alpha \in \Af_n(\Ik)$.  
Then $\psi _1 \alpha \psi _2$ is either affine or \cotame.
\end{theorem}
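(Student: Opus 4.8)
The plan is to reduce the biparabolic case to the already-established parabolic case (Theorem \ref{thm:parabolic}), using the fact that $\PA_n(\Ik)$ is precisely the normalizer of $\Tr_{n,n}(\Ik)$ together with the new Theorem \ref{thm:tdcotame}. Set $\varphi = \psi_1 \alpha \psi_2$, and for $c \in \Ik$ consider the conjugate $\widetilde{\varphi} := \varphi^{-1}\theta_{n,c}\varphi$ of the translation $\theta_{n,c}\in\Tr_{n,n}(\Ik)$. The whole argument hinges on showing that this conjugate is \emph{always} parabolic, and then splitting into two cases according to whether it is ever non-affine.

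First I would verify that $\widetilde{\varphi}\in\PA_n(\Ik)$ for every $c$. Writing $\widetilde{\varphi} = \psi_2^{-1}\alpha^{-1}\psi_1^{-1}\theta_{n,c}\psi_1\alpha\psi_2$, I would push $\theta_{n,c}$ outward through each factor. Since $\psi_1\in\PA_n(\Ik)$ normalizes $\Tr_{n,n}(\Ik)$, the inner conjugate $\psi_1^{-1}\theta_{n,c}\psi_1$ is again a translation in the last component. Conjugating this by the affine map $\alpha$ yields an honest translation (in a direction determined by the linear part of $\alpha$), and \emph{every} translation is parabolic, since all translations commute and hence normalize $\Tr_{n,n}(\Ik)$. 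Finally, conjugating by $\psi_2\in\PA_n(\Ik)$ keeps us inside the group $\PA_n(\Ik)$. The only place where the possibly non-parabolic factor $\alpha$ enters is the middle step, and the point is that it merely conjugates a translation into another translation, so no difficulty arises there.

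With $\widetilde{\varphi}\in\PA_n(\Ik)$ in hand, I would invoke the dichotomy supplied by the translation-degenerate machinery. If $\widetilde{\varphi}$ is affine for \emph{every} $c\in\Ik$, then $\varphi$ is translation degenerate in $x_n$, and Theorem \ref{thm:tdcotame} immediately gives that $\varphi$ is affine or \cotame. Otherwise there is some $c$ for which $\widetilde{\varphi}$ is a non-affine parabolic automorphism; Theorem \ref{thm:parabolic} then shows $\widetilde{\varphi}$ is \cotame, and since $\widetilde{\varphi}=\varphi^{-1}\theta_{n,c}\varphi$ lies in $\langle\varphi,\Af_n(\Ik)\rangle$, co-tameness of $\widetilde{\varphi}$ forces co-tameness of $\varphi$. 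I expect essentially all of the genuine difficulty to be absorbed into Theorem \ref{thm:tdcotame}: the degenerate case, in which every conjugate of a translation collapses to an affine map, is exactly the obstruction the older arguments had to dispatch by hand, and it is what this new approach offloads. The remaining steps are bookkeeping about the normalizer structure of $\PA_n(\Ik)$ and the elementary observation that an affine conjugate of a translation is a translation, so the real obstacle lies not in this proof but in establishing Theorem \ref{thm:tdcotame}.
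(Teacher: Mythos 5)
Your proposal is correct and follows essentially the same route as the paper's proof: split on whether $\psi_1\alpha\psi_2$ is translation degenerate in $x_n$ (handled by Theorem \ref{thm:tdcotame}), and otherwise observe that $(\psi_1\alpha\psi_2)^{-1}\theta_{n,c}(\psi_1\alpha\psi_2)=\psi_2^{-1}(\alpha^{-1}\theta_{n,c/a}\alpha)\psi_2$ is a parabolic automorphism because the affine conjugate of a translation is a translation, so Theorem \ref{thm:parabolic} applies. The paper's argument is the same computation with the same two cases.
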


\begin{proof}
If $\psi _1 \alpha \psi _2$ is translation degenerate in $x_n$ then $\psi _1 \alpha \psi _2$ is  either affine or \cotame by
Theorem~\ref{thm:tdcotame}.
Suppose $\psi _1 \alpha \psi _2$ is not translation degenerate in $x_n$.  
Thus we may choose $c \in \Ik$ such that, setting $\theta=\theta_{n,c}$, 
$(\psi _1 \alpha \psi _2)^{-1} \theta (\psi _1 \alpha \psi _2)$ is not affine.  We compute
$$(\psi _1 \alpha \psi _2)^{-1} \theta (\psi _1 \alpha \psi _2) = \psi _2 ^{-1} \alpha ^{-1} \theta_{n,c/a} \alpha \psi _2$$
where $a\in\K^*$ is such that $(x_n)\psi_1-ax_n\in\K^{[n-1]}$.
Note that $\alpha ^{-1} \theta_{n,c/a}\alpha \in \Tr_n(\Ik)$, and therefore $\psi _2 ^{-1} \alpha ^{-1} \theta_{n,c/a} \alpha \psi _2 \in \PA_n(\Ik)$.  Since it is nonaffine, it must be \cotame by Theorem \ref{thm:parabolic}, and thus $\psi _1 \alpha \psi _2$ is \cotame.
\end{proof}

\subsection{3-triangular maps are \cotame}\label{ss:3tri}

In this section we prove Theorem \ref{mt1}.  Suppose $\phi$ is a 3-triangular map, i.e., 
$\phi=\alpha _0 \tau _1 \alpha _1 \tau _2 \alpha _2 \tau _3 \alpha _3$  for some $\tau _i \in \BA_n(\Ik)$, 
and $\alpha _i \in \Af_n(\Ik)$.  
First, since $\Af_n(\Ik)=\GL_n(\Ik) \ltimes \Tr_n(\Ik)$, it suffices to assume $\alpha _i \in \GL_n(\Ik)$.  
Next, observe that we can can assume $\alpha _0 = \alpha _3 = {\rm id}$ (since $\phi$ is affinely equivalent to 
$\alpha _0 ^{-1} \phi \alpha _3^{-1}$).
If $\phi$ is translation degenerate in $x_n$ then $\phi$ is  either affine or \cotame by
Theorem~\ref{thm:tdcotame}, so we suppose $\phi$ is not translation degenerate in $x_n$.  
Thus we may choose $c \in \Ik$ such that, setting $\theta=\theta_{n,c}$, $\phi ^{-1} \theta \phi$ is not affine.  
Then it suffices to show $\phi ^{-1} \theta \phi$ is \cotame; we compute
$$\phi ^{-1} \theta \phi = \tau _3 ^{-1} \alpha _2 ^{-1} \left(\tau _2 ^{-1} \alpha _1 ^{-1} \tau _1 ^{-1} \theta \tau _1 \alpha _1 \tau _2\right) \alpha _2 \tau _3.  $$

However, note that $\tau _1 ^{-1} \theta \tau _1 = \theta$, and thus $\alpha _1 ^{-1} \tau _1 ^{-1} \theta \tau _1 \alpha _1 = \alpha _1 ^{-1}  \theta  \alpha _1 \in \Tr_n(\Ik)$, which implies $\tau _2 ^{-1}\left( \alpha _1 ^{-1} \tau _1 ^{-1} \theta \tau _1 \alpha _1 \right)\tau _2 \in \BA_n(\Ik)$.  Thus, Theorem \ref{mt1} is a consequence of the following theorem.

\begin{theorem}\label{thm:3tri}
Let $\psi \in \PA_n(\Ik)$, let $\tau \in \BA_n(\Ik)$, and let $\alpha \in \GL_n(\Ik)$.  Then $\psi ^{-1} \alpha ^{-1} \tau \alpha \psi$ is either affine or \cotame.
\end{theorem}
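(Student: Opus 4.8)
The plan is to apply the translation-conjugation technique from the introduction with $x_n$ as the distinguished variable, reducing to families already known to be affine or \cotame. Set $\varphi=\psi^{-1}\alpha^{-1}\tau\alpha\psi$; note that $\varphi=\beta^{-1}\tau\beta$ with $\beta=\alpha\psi$, so $\varphi$ is a conjugate of the triangular map $\tau$. If $\varphi$ is translation degenerate in $x_n$, then Theorem~\ref{thm:tdcotame} immediately gives that $\varphi$ is affine or \cotame. So I would assume $\varphi$ is not translation degenerate in $x_n$ and fix $c\in\Ik$ with $\varphi^{-1}\theta_{n,c}\varphi$ non-affine; since $\varphi^{-1}\theta_{n,c}\varphi\in\langle\varphi,\Af_n(\Ik)\rangle$, it suffices to show this conjugate is \cotame.

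The first step is the computation showing that conjugation by $\theta_{n,c}$ preserves the family. Let $a\in\Ik^*$ be the scalar with $(x_n)\psi-ax_n\in\Ik^{[n-1]}$; Lemma~\ref{lem:para} gives $\psi\theta_{n,c}\psi^{-1}=\theta_{n,ac}$, and since $\alpha\in\GL_n(\Ik)$ the conjugate $t:=\alpha\theta_{n,ac}\alpha^{-1}$ is again a translation, so $t\in\Tr_n(\Ik)\subset\BA_n(\Ik)$. As $\BA_n(\Ik)$ is a group, $\tau':=\tau^{-1}t\tau\in\BA_n(\Ik)$, and substituting into the conjugation yields
$$\varphi^{-1}\theta_{n,c}\varphi=\psi^{-1}\alpha^{-1}\tau'\alpha\psi,$$
a map of exactly the same shape, now with triangular factor $\tau'$. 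This is the inductive step. Its natural base case is $\tau$ affine: then $\alpha^{-1}\tau\alpha$ is affine and $\varphi=\psi^{-1}(\alpha^{-1}\tau\alpha)\psi$ is biparabolic, so Theorem~\ref{thm:biparabolic} applies.

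To organize the induction I would measure complexity not by the (possibly non-decreasing) total degree of $\tau'$, but by the degree of the locally nilpotent derivation $E$ defined by $\varphi^{-1}\theta_{n,c}\varphi=\exp(cE)$, where $E$ is independent of $c$ because $c\mapsto\varphi^{-1}\theta_{n,c}\varphi$ is a one-parameter subgroup. Here translation degeneracy in $x_n$ is precisely the condition that $E$ be an affine vector field, i.e.\ $\deg E\le 1$; thus in the non-degenerate case $\deg E\ge 2$, and passing to $\varphi^{-1}\theta_{n,c}\varphi$ replaces $E$ by the derivation $E'$ of the new conjugate. The technical heart — and the step I expect to be the main obstacle — is to prove that $\deg E'<\deg E$, so that after finitely many iterations the map either becomes translation degenerate (finish with Theorem~\ref{thm:tdcotame}) or its triangular factor becomes affine (finish with Theorem~\ref{thm:biparabolic}). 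The decrease should be extracted from the bracket expansion $E'=\partial_{x_n}+c\,[E,\partial_{x_n}]+\cdots$, in which the first bracket $[E,\partial_{x_n}]=-\sum_i(\partial_{x_n}E_i)\partial_i$ manifestly lowers the degree; the delicate point is to show that the higher iterated brackets cannot restore the top-degree part. Controlling those higher brackets, e.g.\ by exploiting that $E$ admits a slice (coming from $\exp(cE)$ being conjugate to a translation), is where I would concentrate the effort.
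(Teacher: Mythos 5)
Your setup coincides exactly with the paper's: dispose of the translation-degenerate case via Theorem~\ref{thm:tdcotame}, compute $\varphi^{-1}\theta_{n,c}\varphi=\psi^{-1}\alpha^{-1}\tilde{\tau}\alpha\psi$ with $\tilde{\tau}=\tau^{-1}(\alpha\theta_{n,ac}\alpha^{-1})\tau\in\BA_n(\Ik)$, observe that the shape of the map is preserved, and finish via Theorem~\ref{thm:biparabolic} once the triangular factor becomes affine. But the entire technical content of the theorem is the strictly decreasing complexity measure that drives the induction, and that is precisely the step you defer as ``the main obstacle.'' So the proposal has a genuine gap, and moreover the measure you propose is the wrong one: neither $\deg E$ nor the total degree of the triangular factor decreases in general under this conjugation. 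For the triangular factor this is easy to see concretely: with $\tau=(x_1,\,x_2+x_1^3,\,x_3+x_2^2)$ and $\tilde{\theta}=\theta_{1,c}$, the new third component of $\tau^{-1}\tilde{\theta}\tau$ is $x_3-2x_2Q_2-Q_2^2$ with $Q_2=-3cx_1^2-3c^2x_1-c^3$, which has total degree $4>\deg\tau$. Correspondingly, in your bracket expansion $E'=\exp(c\,\mathrm{ad}_E)(\partial_{x_n})$ the iterated brackets $[E,[E,\partial_{x_n}]]$, etc., have degree growing roughly like multiples of $\deg E$, and there is no a priori cancellation; the slice coming from $\exp(cE)$ being conjugate to a translation does not obviously control this. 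You would be left proving a statement that is false as stated for total degree.

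What the paper does instead is measure $\tau$ by the vector $d_r(\tau)=(\deg_{x_r}(x_1)\tau,\ldots,\deg_{x_r}(x_n)\tau)$ under the lexicographic order, for a single well-chosen variable $x_r$. Writing $\alpha=(a_{1,1}x_1+\cdots+a_{1,n}x_n,\ldots)$, the translation $\tilde{\theta}=\alpha\theta_{n,c}\alpha^{-1}$ shifts $x_r$ by $a_{r,n}c$; Lemma~\ref{lem:drdecrease} shows that if $a_{r,n}\neq 0$ and $d_r(\tau)\glex{\bf e}_r$ then $d_r(\tilde{\tau})\lex d_r(\tau)$ strictly (the decrease lives in the \emph{partial} degree in $x_r$, which is exactly what survives the Taylor-expansion cancellation even when total degree grows). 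Two further points you would need and do not address: (i) a case analysis showing that if \emph{no} index $r$ satisfies both $a_{r,n}\neq 0$ and $d_r(\tau)\glex{\bf e}_r$, then $\tilde{\tau}$ is already a translation, hence affine, landing in the biparabolic base case; and (ii) the induction is a double one --- downward on $d_r(\tilde{\tau})$ for the minimal admissible $r$, and then upward on $r$ once $d_r$ bottoms out at ${\bf e}_r$ --- since a single pass in one variable does not make $\tilde{\tau}$ affine. Your skeleton is right, but without a correct decreasing invariant the proof does not close.
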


Before proving this, we require a definition and a lemma that will provide the crucical inductive step in the proof of Theorem \ref{thm:3tri}. 

\begin{definition}
For $1 \leq r \leq n$, we define the map $d_r : \BA_n(\K) \rightarrow \IN^n$ by 
$$d_r( \tau) = \left( \deg _{x_r} ((x_1)\tau), \ldots , \deg _{x_r} ((x_n)\tau) \right).$$
\end{definition}

We will use ${\bf e}_r$ to denote the $r$-th standard basis vector of $\IN^n$, and $\lex$ to denote the lexicographic 
ordering on $\IN^n$.  Note that for any $\tau \in \BA_n(\Ik)$, if we write $(x_i)\tau = u_ix_i+P_i$ for some $u_i \in \Ik^*$ and $P_i \in \Ik^{[i-1]}$, then we have $$d_r(\tau) = \left(0,\ldots,0,1,\deg _{x_r} (P_{r+1}),\ldots,\deg_{x_r}(P_n) \right) \geq _{\rm lex} {\bf e}_r.$$

\begin{lemma}\label{lem:drdecrease}
Let $\tau \in \BA_n (\Ik)$ and let 
$\theta = \theta_{r,c_r}\ldots\theta_{n,c_n}=(x_1,\ldots,x_{r-1},x_r+c_r,,\ldots,x_n+c_n) \in \Tr_n(\Ik)$ 
for some $c_r \in \Ik^*$ and $c_{r+1},\ldots,c_n \in \Ik$.  
Set $\tilde{\tau} =  \tau ^{-1} \theta  \tau \in \BA_n (\Ik)$.  
If $d_r(\tau) \glex {\bf e}_r$, then  $d_r(\tilde{\tau}) \lex d_r(\tau)$. 
\end{lemma}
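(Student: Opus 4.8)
The plan is to extract the components of $\tilde{\tau}$ recursively from the conjugation identity and track the $x_r$-degree of each. Writing $(x_i)\tau = u_ix_i + P_i$ with $u_i \in \Ik^*$ and $P_i \in \Ik^{[i-1]}$, and abbreviating $\tilde{x}_i := (x_i)\tilde{\tau}$, I would rewrite $\tilde{\tau} = \tau^{-1}\theta\tau$ as $\tau\tilde{\tau} = \theta\tau$ and apply both sides to $x_i$. Since $\tilde{\tau}$ acts as the ring endomorphism sending $x_j \mapsto \tilde{x}_j$, the left-hand side is $u_i\tilde{x}_i + P_i(\tilde{x}_1,\ldots,\tilde{x}_{i-1})$, while the right-hand side is $(x_i)\tau + \varepsilon_i$ with $\varepsilon_i = c_i$ for $i \ge r$ and $\varepsilon_i = 0$ for $i < r$. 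This gives the triangular recursion
\[
\tilde{x}_i = x_i + \tfrac{1}{u_i}\bigl(P_i(x_1,\ldots,x_{i-1}) - P_i(\tilde{x}_1,\ldots,\tilde{x}_{i-1}) + \varepsilon_i\bigr),
\]
which determines $\tilde{x}_i$ from $\tilde{x}_1,\ldots,\tilde{x}_{i-1}$ and lets me compute $d_r(\tilde{\tau})$ entry by entry.

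First I would pin down the components of index at most $r$. For $i < r$ a straightforward induction gives $\tilde{x}_i = x_i$: there is no translation ($\varepsilon_i = 0$) and $P_i$ involves only variables already shown to be fixed, so the correction term vanishes; hence $\deg_{x_r}(\tilde{x}_i) = 0 = \deg_{x_r}((x_i)\tau)$. For $i = r$, the same cancellation of $P_r(\tilde{x}) - P_r(x)$ leaves $\tilde{x}_r = x_r + c_r/u_r$. This is the one place the hypothesis $c_r \in \Ik^*$ enters: it makes the shift of $x_r$ genuine, and in particular $\deg_{x_r}(\tilde{x}_r) = 1 = \deg_{x_r}((x_r)\tau)$. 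Consequently the first $r$ coordinates of $d_r(\tilde{\tau})$ and $d_r(\tau)$ coincide, and the whole comparison is decided by coordinates of index $> r$.

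The heart of the argument is a degree drop at the first coordinate that actually sees $x_r$. Let $i_0 > r$ be minimal with $\deg_{x_r}(P_{i_0}) \ge 1$; such an $i_0$ exists precisely because $d_r(\tau) \glex {\bf e}_r$. For $r < i < i_0$ the polynomial $P_i$ is free of $x_r$, and I would check inductively that $\deg_{x_r}(\tilde{x}_i) = 0$ as well, so these coordinates still match those of $d_r(\tau)$. At $i = i_0$ the key observation is that the substitution $x_r \mapsto x_r + c_r/u_r$ preserves the leading coefficient of a polynomial regarded in $x_r$; thus the top-degree-in-$x_r$ parts of $P_{i_0}(x)$ and $P_{i_0}(\tilde{x})$ should cancel in the difference, forcing $\deg_{x_r}(\tilde{x}_{i_0}) < \deg_{x_r}(P_{i_0}) = \deg_{x_r}((x_{i_0})\tau)$. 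Since all earlier coordinates agree and coordinate $i_0$ strictly drops, this yields $d_r(\tilde{\tau}) \lex d_r(\tau)$.

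The step I expect to require the most care is exactly this cancellation of the leading $x_r$-term at $i_0$. In $P_{i_0}(\tilde{x}_1,\ldots,\tilde{x}_{i_0-1})$ one substitutes not only $\tilde{x}_r = x_r + c_r/u_r$ but also the intermediate components $\tilde{x}_k$ for $r < k < i_0$; a priori these could interact with $x_r$-monomials of $P_{i_0}$ and reintroduce a top-degree $x_r$-term, spoiling the cancellation. Making the argument airtight therefore hinges on a careful accounting of the $x_r$-behaviour of each intermediate $\tilde{x}_k$ --- verifying that, to the order relevant for the leading $x_r$-coefficient, the substitution acts as a pure shift of $x_r$ --- and this bookkeeping on the intermediate components is where I would concentrate the technical effort.
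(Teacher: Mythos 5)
Your derivation of the recursion $\tilde{x}_i = x_i + u_i^{-1}\left(P_i(x_1,\ldots,x_{i-1}) - P_i(\tilde{x}_1,\ldots,\tilde{x}_{i-1}) + \varepsilon_i\right)$ is exactly the identity the paper works with (obtained there via the elementary factorization $\tau=\tau_1\cdots\tau_n$), and your treatment of the coordinates $i\le r$ and $r<i<i_0$ is correct. But the step you defer to the end --- the cancellation of the leading $x_r$-term at $i=i_0$ --- is the entire content of the lemma, and the worry you voice about it is not a bookkeeping nuisance: in the stated generality the cancellation genuinely fails. For $r<k<i_0$ your induction only yields that $\tilde{x}_k-x_k$ is free of $x_r$, not that it is constant; the translations $c_k$ with $k>r$ feed through the nonlinear $P_k$ and produce non-constant corrections. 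Writing $P_{i_0}=\sum_{k\le d}A_kx_r^k$ with $A_d\ne 0$ and $A_k$ free of $x_r$, the $x_r^d$-coefficient of $P_{i_0}(x)-P_{i_0}(\tilde{x})$ is $A_d(x)-A_d(\tilde{x})$, which need not vanish. Concretely, take $n=4$, $r=1$, $\tau=(x_1,x_2,x_3+x_2^2,x_4+x_1x_3)$ and $\theta=(x_1+1,x_2+1,x_3,x_4)$: then $(x_3)\tilde{\tau}=x_3-2x_2-1$ and $(x_4)\tilde{\tau}=x_4+2x_1x_2+x_1-x_3+2x_2+1$, so $d_1(\tilde{\tau})=(1,0,0,1)=d_1(\tau)$ and there is no strict decrease, even though $c_1\in\Ik^*$ and $d_1(\tau)\glex{\bf e}_1$. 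So you should not expect to close this step as stated; for what it is worth, the paper's own proof glosses over the identical point (it records only that the intermediate corrections are free of $x_r$ and then invokes Taylor's formula, which is insufficient for exactly the reason you identify).

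What your induction does prove is the case $c_{r+1}=\cdots=c_n=0$: there $\varepsilon_k=0$ for every $k\ne r$, so the same induction that gives you $\deg_{x_r}(\tilde{x}_k)=0$ actually gives $\tilde{x}_k=x_k$ exactly for all $k<i_0$ with $k\ne r$, the substitution into $P_{i_0}$ really is the pure shift $x_r\mapsto x_r+c_r/u_r$, and $\deg_{x_r}(\tilde{x}_{i_0}-x_{i_0})=\deg_{x_r}(P_{i_0})-1$ follows from Taylor's theorem since $c_r\ne 0$. To get a correct statement you would need to restrict $\theta$ to $\theta_{r,c_r}$ (or otherwise force the intermediate corrections $\tilde{x}_k-x_k$, $r<k<i_0$, to be constants), and then re-examine how the lemma is invoked, since the application conjugates by a general translation.
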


\begin{proof}
Write $\tau = \tau _1 \cdots  \tau _n$, where $\tau _i = (x_1,\ldots,x_{i-1},u_ix_i+P_i,x_{i+1},\ldots,x_n)$ for some $u_i \in \Ik^*$ and $P_i \in \Ik^{[i-1]}$ ($1\le i\le n$).  

Observe that
\begin{align*}
\tilde{\tau} &=  \tau _n ^{-1} \cdots \tau _1 ^{-1} \theta \tau _1 \cdots \tau _n \\
&= \tau _n ^{-1} \cdots \tau _{r} ^{-1} \theta \tau _{r} \cdots \tau _n.
\end{align*}

Write $\tilde{\tau} = (x_1,\ldots,x_{r-1},x_{r}+Q_{r},\ldots,x_n+Q_n)$ for some $Q_i \in\Ik[x_1,\ldots,x_{i-1}]$.   
It is easy to see that, for each $r \leq i \leq n$, 
$$\tau _i ^{-1} \cdots \tau _{r} ^{-1} \theta _r \tau _{r} \cdots \tau _i = (x_1,\ldots,x_{r-1},x_r+Q_r,\ldots,x_i+Q_i,x_{i+1},\ldots,x_n),$$
which immediately implies that $Q_r = u_r^{-1}c_r$ and for $r+1 \leq i \leq n$,
$$Q_i = P_i(x_1,\ldots,x_{i-1})-P_i(x_1,\ldots,x_{r-1},x_r+Q_r,\ldots,x_{i-1}+Q_{i-1}).$$

Since $d_r(\tau) \glex {\bf e}_r$, we may let  $l \geq r+1$ be minimal such that $\deg _{x_r}(P_l) > 0$.  Note that this implies   $Q_i \in \Ik[x_1,\ldots,x_{r-1},x_{r+1},\ldots,x_{i-1}]$ for $r<i<l$; then one easily sees from Taylor's formula that $ \deg _{x_{r}} (Q_l) = \deg _{x_{r}}(P_l)-1$, and thus $d_r(\tilde{\tau}) \lex d_r (\tau)$.
\end{proof}

\begin{proof}[Proof of Theorem \ref{thm:3tri}]
Set $\phi = \psi ^{-1} \alpha ^{-1} \tau \alpha \psi$.  Without loss of generality, we may assume that $\phi$ is not translation degenerate.  Then there exists $c \in \Ik$ such that (letting $\theta =\theta_{n,c}$)  $\phi ^{-1} \theta \phi$ is not affine.  Set
\begin{align*}
\tilde{\theta} &= \alpha \theta \alpha ^{-1} &\text{and}& &\tilde{\tau} &= \tau ^{-1}\tilde{\theta} \tau, 
\end{align*}
and note $\tilde{\theta} \in \Tr_n(\Ik)$.  In particular, writing $$\alpha = (a_{1,1}x_1+\cdots+a_{1,n}x_n,\ldots,a_{n,1}x_1+\cdots+a_{n,n}x_n) \in \GL_n(\Ik),$$ we have $\tilde{\theta}=(x_1+a_{1,n}c,\ldots,x_n+a_{n,n}c)$.  

We compute
\begin{align*}
\phi ^{-1} \theta \phi &= \psi ^{-1} \alpha ^{-1} \tau ^{-1} \alpha \psi \theta \psi ^{-1} \alpha ^{-1} \tau \alpha \psi \\
&= \psi ^{-1} \alpha ^{-1} \tau ^{-1} \alpha \theta  \alpha ^{-1} \tau \alpha \psi \\
&= \psi ^{-1} \alpha ^{-1} \tau ^{-1}\tilde{\theta} \tau \alpha \psi \\
&= \psi ^{-1} \alpha ^{-1} \tilde{\tau} \alpha \psi .
\end{align*}

First, let us suppose that $\tilde{\tau} \in \Af_n(\Ik)$. Then $\alpha ^{-1} \tilde{\tau} \alpha \in \Af_n(\Ik)$, in which case $\phi ^{-1} \theta \phi = \psi ^{-1} \left(\alpha ^{-1} \tilde{\tau} \alpha \right) \psi $ is nonaffine and biparabolic, and thus \cotame by Theorem \ref{thm:biparabolic}.

Next, we suppose instead that there exist some $1 \leq r \leq n$ with $a _{r,n} \neq 0$ and $d_r(\tau) \glex {\bf e}_r$; 
we claim that if no such $r$ exists, then $\tilde{\tau} \in \Af_n(\Ik)$.  
Indeed, write $(x_i)\tau = u_ix_i+P_i$ for some $u_i \in \Ik$ and $P_i \in \Ik[x_1,\ldots,x_{i-1}]$ for each $1 \leq i \leq n$.  Then $(P_i) \tilde{\theta}=P_i$ for each $i$, and a simple computation shows $(x_i) \tilde{\tau} = x_i+u_i^{-1}a_{i,n}c$ for each $1 \leq i \leq n$, in which case $\tilde{\tau} \in \Tr_n(\Ik)$.

So we let $r$ be minimal such that $a _{r,n} \neq 0$ and $d_r(\tau) \glex {\bf e}_r$.
By Lemma \ref{lem:drdecrease}, we have $d_r(\tilde{\tau}) \lex d_r(\tau)$.  We can thus repeat the process until we obtain $d_r(\tilde{\tau})={\bf e}_r$, allowing us to induct upwards on $r$.  This process must eventually terminate by producing either a translation degenerate map (which is \cotame by Theorem \ref{thm:tdcotame}), or a map of the form $\psi ^{-1} \alpha \tilde{\tau} \alpha \psi$ in which $\tilde{\tau}$ is affine, which we showed above must be \cotame.
\end{proof}

\subsection{Some results on exponentials}\label{sec:exponentials}

In \cite{E}, the first author showed that the Nagata map, shown to be wild by Shestakov and Umirbaev \cite{SU}, is \cotame.  Since it is well known that the Nagata map can be given as the exponential of a multiple of triangular locally nilpotent derivation, we can view the following as a generalization of this result (and also as a generalization of Bodnarchuk's result that triangular automorphisms are \cotame).
\begin{theorem}\label{thm:expcotame}
Let $D \in \LND _\Ik \Ik^{[n]}$ be triangular, and let $F \in \ker D$.  Then $\exp(FD)$ is either affine or co-tame.
\end{theorem}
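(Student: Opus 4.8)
The plan is to reduce the statement to the known co-tameness results via the translation-conjugation technique described in the introduction, using the triangularity of $D$ to control the relevant degree. Write $\varphi = \exp(FD)$ and note that since $D$ is triangular, there is a largest index $r$ with $D(x_r) \neq 0$, and $D(x_i) = 0$ for $i > r$; moreover $D(x_r) \in \Ik^{[r-1]}$. Because $F \in \ker D$ and $D$ is locally nilpotent, $\varphi$ is triangular in the weak sense that $(x_i)\varphi = x_i$ for $i > r$ and $(x_r)\varphi = x_r + F\cdot D(x_r)$, with the lower components $(x_i)\varphi$ for $i < r$ again lying in a triangular form. If $\varphi$ is affine we are done, so assume $\deg \varphi \geq 2$; the goal is to show $\varphi$ is co-tame.

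The first step is to dispose of the degenerate case: if $\varphi$ is translation degenerate in some suitable variable $x_s$, then Theorem~\ref{thm:tdcotame} immediately yields that $\varphi$ is affine or co-tame, and we are finished. So I would assume $\varphi$ is not translation degenerate, and then for an appropriate index choose $c \in \Ik$ so that $\widetilde{\varphi} = \varphi^{-1}\theta_{s,c}\varphi$ is not affine. The key computation is to identify $\widetilde{\varphi}$: conjugating the translation $\theta_{s,c}$ by $\exp(FD)$ should again produce an exponential $\exp(\widetilde{F}D)$ (or a map of the same controlled shape), and using Lemma~\ref{lem:partial} the conjugation effectively applies a finite partial difference that strictly drops the degree in the chosen variable. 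The natural choice is to conjugate by a translation in the variable $x_s$ on which $D(x_r)$ (equivalently $F\cdot D(x_r)$) genuinely depends, so that $\Delta_s$ lowers $\deg_{x_s}$ of the top component by one, exactly as in Bodnarchuk's proof of Theorem~\ref{thm:triangular}.

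I would then set up a downward induction on the degree of $F\cdot D(x_r)$ (or on a lexicographically ordered multidegree analogous to the $d_r$ of Lemma~\ref{lem:drdecrease}), showing that at each stage $\widetilde{\varphi}$ remains of the form $\exp(\widetilde{F}D)$ for some $\widetilde{F} \in \ker D$ of strictly smaller degree. Since co-tameness of $\widetilde{\varphi}$ implies co-tameness of $\varphi$, iterating drives the degree down until either we hit a translation-degenerate map (co-tame by Theorem~\ref{thm:tdcotame}) or we reach a genuinely triangular automorphism, which is affine or co-tame by Theorem~\ref{thm:triangular}.

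The main obstacle I anticipate is verifying that conjugating $\exp(FD)$ by the translation preserves the class of maps $\exp(\widetilde{F}D)$ with $\widetilde{F} \in \ker D$, rather than escaping into a larger class. Concretely, one must check that $\varphi^{-1}\theta_{s,c}\varphi$ can be written as the exponential of another locally nilpotent derivation proportional to $D$, and that the ``new coefficient'' $\widetilde{F}$ still lies in $\ker D$. This requires carefully exploiting that $F \in \ker D$ together with the triangular structure of $D$, likely via the identity that conjugating a translation by $\exp(FD)$ measures the finite difference $\Delta_s$ of the defining data. Once this invariance is established, the degree bookkeeping and the appeal to Theorems~\ref{thm:tdcotame} and~\ref{thm:triangular} are routine.
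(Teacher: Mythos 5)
Your overall strategy --- fall back on Theorem~\ref{thm:tdcotame} in the translation degenerate case, otherwise conjugate a translation into the map and induct downward on a degree --- is the same as the paper's. But there is a genuine gap at exactly the point you flag as the main obstacle, and your proposed choice of translation variable makes that obstacle insurmountable rather than resolving it. You propose to translate in a variable $x_s$ on which $D(x_r)$ genuinely depends (so $s\le r-1<n$). But then $\theta_{s,c}$ does \emph{not} commute with $D$: conjugating $\exp(FD)$ by $\theta_{s,c}$ yields the exponential of the conjugated derivation, which is a multiple of $\theta_{s,c}^{-1}D\theta_{s,c}\ne D$, and the resulting commutator is a product of exponentials of two non-commuting locally nilpotent derivations. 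There is no reason for it to be of the form $\exp(\widetilde F D)$ with $\widetilde F\in\ker D$, so the class is not preserved and the induction does not close. (Test it on the Nagata derivation, with $D(x_3)=-2x_2$: translating $x_2$ visibly changes $D$ under conjugation.)

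The paper's fix is the opposite choice: translate in $x_n$, the one variable that, by triangularity, appears in \emph{no} $D(x_i)$. Then $D$ commutes with $\theta=\theta_{n,c}$, so $(F)\theta\in\ker D$, the two exponents commute, and one gets the clean identity $\theta^{-1}\exp(-FD)\theta\exp(FD)=\exp\bigl((F-(F)\theta)D\bigr)$ with $\deg_{x_n}(F-(F)\theta)<\deg_{x_n}F$ by Taylor's theorem; the induction is on $\deg_{x_n}F$. Note also that your terminal case is misidentified: when the degree bottoms out one has $F\in\Ik^{[n-1]}$, and $\exp(FD)$ is then \emph{parabolic} (handled by Theorem~\ref{thm:parabolic}), not triangular in general, since $F$ may still involve $x_2,\ldots,x_{n-1}$, which then enter the low-index components. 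Finally, the paper actually proves the stronger Theorem~\ref{thm:2para1exp}, sandwiching $\exp(FD)$ between two parabolic maps, precisely so that the map $\psi_2^{-1}\exp\bigl((F-(F)\theta)D\bigr)\psi_2$ produced at each step stays inside the class being inducted on; your formulation, which conjugates $\exp(FD)$ alone, would likewise need to enlarge the class after the first step.
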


Before proving this, we remark that this is in striking parallel to the well known result of Smith \cite{Smith} that all such maps are stably tame.
\begin{theorem}[M. Smith]
Let $D \in \LND _\Ik \Ik^{[n]}$ be triangular, and let $F \in \ker D$.  Then $\exp(FD)$ is stably tame.
\end{theorem}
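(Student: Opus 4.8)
The plan is to prove stable tameness by adjoining a single variable and exploiting the fact that, although $\exp(FD)$ itself is generally wild, the ``universal'' exponential $\exp(yD)$ in one extra variable $y$ is visibly triangular. Recall that $\phi \in \GA_n(\Ik)$ is stably tame if $\phi \times \id \in \TA_{n+m}(\Ik)$ for some $m \geq 0$, where $\phi \times \id$ acts on $\Ik^{[n+m]}$ fixing the adjoined variables. I would take $m=1$: set $\Ik^{[n+1]} = \Ik[x_1,\ldots,x_n,y]$ and extend $D$ by declaring $D(y)=0$, so that $D$ stays triangular and both $y$ and $F$ lie in $\ker D$. Consequently $yD$, $FD$ and $(y+F)D$ are all locally nilpotent, and being $(\ker D)$-multiples of the single derivation $D$ they commute pairwise; in particular $\exp((y+F)D) = \exp(yD)\exp(FD)$ as automorphisms of $\Ik^{[n+1]}$ (the order of composition is immaterial since the factors commute, so this identity transfers without issue to the right-action convention $(P)(\phi\psi)=((P)\phi)\psi$).

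Two observations then drive the argument. First, $\exp(yD)$ is tame: ordering the variables as $(y,x_1,\ldots,x_n)$, one has $(y)\exp(yD)=y$ and $(x_i)\exp(yD)=x_i+\sum_{k\ge 1}\frac{y^k}{k!}D^k(x_i)\in\Ik[y,x_1,\ldots,x_{i-1}]$, so $\exp(yD)$ is triangular with respect to this ordering, hence lies in $\TA_{n+1}(\Ik)$ (triangularity in any variable order is tame, since the reordering permutation is linear). Its inverse $\exp(-yD)$ is tame for the same reason. Second, since $FD(y)=0$, the automorphism $\exp(FD)$ of $\Ik^{[n+1]}$ fixes $y$ and is therefore exactly the extension $\exp(FD)\times\id$; combining this with the commuting relation above yields the factorization $\exp(FD)\times\id = \exp((y+F)D)\,\exp(-yD)$ in $\GA_{n+1}(\Ik)$. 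The entire problem thus reduces to showing that the single factor $\exp((y+F)D)$ is tame.

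For that factor I would use conjugation by the elementary shear $S=(x_1,\ldots,x_n,y+F)$, which is tame because $F\in\Ik^{[n]}$. A direct check, using that $\exp(yD)$ fixes both $y$ and $F$ (the latter because $F\in\ker D$), shows that substituting $y\mapsto y+F$ into the images $(x_i)\exp(yD)$ replaces each $y^k$ by $(y+F)^k$, producing precisely $(x_i)\exp((y+F)D)$, while the image of $y$ returns to $y$; hence $\exp((y+F)D)$ is the conjugate of $\exp(yD)$ by $S$. As a product of three tame maps it is tame, and therefore $\exp(FD)\times\id = \exp((y+F)D)\,\exp(-yD)\in\TA_{n+1}(\Ik)$, establishing stable tameness with a single added variable.

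The one point demanding care — and the reason the conjugation by $S$ is needed rather than a naive triangularity check — is that $F$ may involve all of $x_1,\ldots,x_n$. Consequently $\exp((y+F)D)$ need \emph{not} be triangular in the ordering $(y,x_1,\ldots,x_n)$, since the image of $x_i$ can acquire the later variable $x_n$ through powers of $F$. The conjugation argument sidesteps this obstacle entirely, relying only on the genuine triangularity of $\exp(yD)$ together with the tameness of the shear $S$. I would double-check the commuting-exponential identity in the paper's composition convention, but as noted the commutativity of the factors makes the ordering irrelevant, so no sign or ordering subtleties arise.
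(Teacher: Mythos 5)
Your proof is correct, but there is nothing in the paper to compare it against: the paper states this result purely as background, attributing it to M.~Smith with a citation to \cite{Smith} and giving no proof (it appears only to highlight the parallel with Theorem~\ref{thm:expcotame}, the co-tameness of $\exp(FD)$). Your argument is in fact essentially Smith's original one: adjoin one variable $y$ with $D(y)=0$, use that $yD$ and $FD$ are commuting locally nilpotent derivations (both coefficients lie in $\ker D$, so $[yD,FD]=(yD(F)-FD(y))D=0$) to get $\exp(FD)\times\id=\exp((y+F)D)\exp(-yD)$, note that $\exp(yD)$ is triangular in the ordering $(y,x_1,\ldots,x_n)$ since $D^k(x_i)\in\Ik[x_1,\ldots,x_{i-1}]$, and realize $\exp((y+F)D)$ as the conjugate of $\exp(yD)$ by the tame shear $S=(x_1,\ldots,x_n,y+F)$. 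The conjugation identity you flagged for convention-checking does hold verbatim in the paper's right-action convention: $(x_i)(S^{-1}\exp(yD)S)=\sum_{k\ge 0}\frac{(y+F)^k}{k!}D^k(x_i)=(x_i)\exp((y+F)D)$ because $(D^k(x_i))S=D^k(x_i)$, and $(y)(S^{-1}\exp(yD)S)=((y-F)\exp(yD))S=(y-F)S=y$ since $\exp(yD)$ fixes both $y$ and $F$. Your closing caveat is also exactly right: $\exp((y+F)D)$ need not be triangular in any ordering because powers of $F$ can involve $x_n$ in the image of $x_i$, which is precisely why the conjugation step, rather than a direct triangularity check, is required.
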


In fact, rather than prove Theorem \ref{thm:expcotame}, we generalize to the following.

\begin{theorem}\label{thm:2para1exp} Let $D \in \LND _\Ik \Ik^{[n]}$ be triangular and let $F \in \ker D$.  Let $\psi _1, \psi _2 \in \PA_n(\Ik)$.  Then $\phi = \psi _1  \exp(FD) \psi _2$ is either affine or cotame.
\end{theorem}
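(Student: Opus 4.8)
The plan is to induct on $\deg_{x_n}F$, using conjugation by translations on the last coordinate to strip the $x_n$-dependence of $F$ one degree at a time while staying inside the family of maps of the form (parabolic)$\,\cdot\exp(\text{kernel element}\cdot D)\cdot\,$(parabolic). The base case will be handled by Bodnarchuk's parabolic result and the degenerate case by Theorem~\ref{thm:tdcotame}.

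First I would record the Lie-algebraic facts that drive everything. Since $D$ is triangular, each $D(x_i)\in\Ik^{[i-1]}$ involves none of $x_n$, so $D$ commutes with $\partial_{x_n}$. From this one computes the two identities, valid for all $F,G\in\ker D$:
\[
[FD,\partial_{x_n}]=-(\partial_{x_n}F)D,\qquad [FD,GD]=0,
\]
together with $\partial_{x_n}F\in\ker D$. Thus $\{GD:G\in\ker D\}$ is an abelian ideal on which $\partial_{x_n}$ acts by $x_n$-differentiation, and $\Ik\partial_{x_n}\ltimes\{GD:G\in\ker D\}$ is a semidirect product. The \emph{reduction step} is then as follows. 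Writing $a\in\Ik^*$ for the scalar attached to $\psi_1$ (so $\psi_1^{-1}\theta_{n,c}\psi_1=\theta_{n,c/a}$ by Lemma~\ref{lem:para}) and setting $c'=c/a$, I would compute
\[
\phi^{-1}\theta_{n,c}\phi=\psi_2^{-1}\exp(-FD)\,\theta_{n,c'}\,\exp(FD)\,\psi_2 .
\]
The adjoint series for the inner conjugation truncates after one term by the identities above, giving $\exp(-FD)\,\theta_{n,c'}\,\exp(FD)=\exp\bigl(c'(\partial_{x_n}+(\partial_{x_n}F)D)\bigr)$ (the sign is immaterial to what follows). Using the semidirect-product structure, this flow factors as $\theta_{n,c'}\exp(GD)$ with $G\in\ker D$ and $\deg_{x_n}G=\deg_{x_n}F-1$; concretely $G$ is a difference of $x_n$-shifts of an $x_n$-antiderivative of $\partial_{x_n}F$, which keeps it in $\ker D$ and drops its $x_n$-degree by exactly one. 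Since $\Tr_{n,n}(\Ik)\subset\PA_n(\Ik)$ and $\PA_n(\Ik)$ is a subgroup, $\phi^{-1}\theta_{n,c}\phi=(\psi_2^{-1}\theta_{n,c'})\exp(GD)\psi_2$ is again of the same form, with the $x_n$-degree of the kernel element strictly smaller.

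The induction then runs cleanly. For the base case $\deg_{x_n}F=0$ we have $\partial_{x_n}F=0$, so $\exp(FD)$ centralizes $\Tr_{n,n}(\Ik)$ and hence lies in $\PA_n(\Ik)$; then $\phi\in\PA_n(\Ik)$ and we conclude by Theorem~\ref{thm:parabolic}. For the inductive step, if $\phi$ is translation degenerate in $x_n$ we are done by Theorem~\ref{thm:tdcotame}; otherwise I would choose $c$ so that $\phi^{-1}\theta_{n,c}\phi$ is nonaffine, note by the reduction step that it has the same form with smaller $\deg_{x_n}$, so that it is \cotame by the inductive hypothesis, whence $\phi$ is \cotame because $\theta_{n,c}$ is affine.

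The main obstacle is the reduction step itself: verifying that conjugating the $x_n$-translation by $\exp(FD)$ lands back in the same family with strictly smaller $x_n$-degree. Everything hinges on the two commutator identities and the resulting abelian-ideal structure, which is precisely what makes the adjoint series terminate and the flow factor as $\theta_{n,c'}\exp(GD)$. I expect the only delicate bookkeeping to be confirming that the new factor $G$ remains in $\ker D$ and that its $x_n$-degree falls by exactly one; both follow from the commutativity of $D$ with $\partial_{x_n}$ together with the explicit antiderivative expression for $G$.
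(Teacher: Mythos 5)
Your proposal is correct and follows essentially the same route as the paper: downward induction on $\deg_{x_n}F$, with the base case $F\in\Ik^{[n-1]}$ handled by parabolicity, the degenerate case by Theorem~\ref{thm:tdcotame}, and the inductive step driven by the identity $\exp(-FD)\,\theta\,\exp(FD)=\theta\exp\bigl((F-(F)\theta)D\bigr)$ together with the Taylor-theorem degree drop. The only differences are presentational — you derive that identity from the truncating adjoint series rather than stating it directly, and you apply translation degeneracy to $\phi$ itself (absorbing the resulting translation into the parabolic factor), which is if anything a slightly more careful bookkeeping of the scalars coming from $\psi_1,\psi_2$ than the paper's.
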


\begin{proof}
First, observe that if $F \in \Ik^{[n-1]}$, then $\exp(FD)$ and then $\phi$ is parabolic and thus either affine or \cotame.
We therefore induct downward on $\deg _{x_n} F$; assume $\deg _{x_n} F > 0$.  
By Theorem~\ref{thm:tdcotame}, we may assume that $\exp(FD)$ is not translation degenerate.  Then there exists $c \in \Ik$ such that, letting $\theta =\theta_{n,c}$, we have $\theta ^{-1} \exp(-FD) \theta \exp(FD) \notin \Af_n(\Ik)$.  
But observe that 
$$\theta ^{-1} \exp(-FD) \theta \exp(FD) = \exp( (F-(F)\theta )D),$$ 
and $\deg _{x_n} (F-(F)\theta) < \deg _{x_n} F$ by Taylor's theorem. Finally, compute
\begin{align*}
\theta ^{-1}(\psi _1 \exp(FD) \psi _2)^{-1} \theta (\psi _1 \exp(FD) \psi _2)
&= \psi _2 ^{-1} \left(\theta ^{-1} \exp(-FD) \theta \exp(FD) \right) \psi _2 \\
&= \psi _2 ^{-1} \exp \left(\left( F-(F)\theta \right)D \right) \psi _2.
\end{align*}
Induction completes the proof.
\end{proof}

\section{Translation degenerate automorphisms}\label{sec:td}

In this section, $n\ge 3$ is an integer.\\

We now aim to understand the key technical hurdle appearing in the proofs of the results in the previous section, which we term {\em translation degenerate automorphisms}. The goal of this section is to prove that they, too, are either affine or co-tame (Theorem \ref{thm:tdcotame}).

\begin{definition}\label{def:td}
Given $1 \leq r \leq n$,
an automorphism $\phi\in\GA_n(\Ik)$ is called {\em translation degenerate in $x_r$} 
if $\phi^{-1}\,\Tr_{n,r}(\Ik)\,\phi\subset \Af_n(\Ik)$.
For the sake of brevity, we will say $\phi$ is {\em translation degenerate} 
if it is translation degenerate in at least one $x_r$.
\end{definition}

\begin{example}
If $\phi \in \PA_{n}(\Ik)$, then $\phi$ is translation degenerate in $x_n$.
\end{example}  

\begin{remark}
Fix $1 \leq r \leq n$. Let $\phi\in\GA_n(\Ik)$ be an automorphism.\\
a) For all $\alpha \in \Af_n(\Ik)$, $\phi$ is translation degenerate in $x_r$ if  and only if $\phi\alpha$ is.\\
b) $\phi$ is translation degenerate in $x_r$ if and only if $\pi\phi$ is translation degenerate in $x_1$,
where  $\pi=(x_1\leftrightarrow x_r)$. So we focus on the case $r=1$.
\end{remark}

\begin{remark}
Let $\tau\in \BA_3(\Ik)$ be a triangular automorphism. 
If $\tau$ is translation degenerate in $x_1$, then it need not be the case that $\tau^{-1}$ 
is also translation degenerate in $x_1$ (as one can see from Example \ref{ex:ttd} below).  
\end{remark}

\begin{example}\label{ex:ttd}
Let $\tau = (x_1,x_2-\frac{1}{2}x_1^2,x_3-x_1x_2+\frac{1}{3}x_1^3) \in \BA_3(\Ik)$.  
We have $\tau^{-1} = (x_1,x_2+\frac{1}{2}x_1^2,x_3+x_1x_2+\frac{1}{6}x_1^3)$.
For any $c \in \Ik^*$, one easily computes 
$$\tau^{-1}\theta_{1,c}\tau = \left(x_1+c, x_2+cx_1+\frac{1}{2}c^2,x_3+cx_2+\frac{1}{2}c^2x_1+\frac{1}{6}c^3\right)
\in\Af_3(\K)\ {\rm and}$$
$$\tau\theta_{1,c}\tau^{-1} = \left(x_1+c, x_2-cx_1-\frac{1}{2}c^2,x_3-cx_2+\frac{1}{2}cx_1^2+c^2x_1+
\frac{1}{3}c^3\right)\not\in\Af_3(\K).$$
Therefore $\tau$ is translation degenerate in $x_1$ but $\tau^{-1}$ is not.
\end{example}

As is often the case, it is useful to think of the group $\Tr_{n,r}(\K)$ in Definition~\ref{def:td} 
as an element of $\Af_n(\Ik[t])$, and specialize by substituting $c \in \Ik$ in for $t$.  
We first check that this technique behaves well with respect to our definition.

\begin{lemma} 
Let $\alpha \in \GA_n(\Ik[t])$.  Let $\alpha_c\in\GA_n(\Ik)$ denote the image of $\alpha$ under the specialization map 
$\GA_n(\Ik[t])\rightarrow\GA_n(\Ik)$ given by $t\mapsto c$.  
If $\alpha_c\in\Af_n(\Ik)$ for every $c\in\Ik$, then $\alpha\in\Af_n(\Ik[t])$.
\end{lemma}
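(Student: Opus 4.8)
The plan is to argue coordinate-by-coordinate, exploiting that $\Ik$ is infinite (being of characteristic zero). First I would write $\alpha$ in coordinates: say $(x_i)\alpha = Q_i \in \Ik[t][x_1,\ldots,x_n]$ for $1 \le i \le n$, and expand each $Q_i = \sum_\beta a_{i,\beta}(t)\, x^\beta$ as a polynomial in the variables $x_1,\ldots,x_n$ with coefficients $a_{i,\beta}(t) \in \Ik[t]$, the sum running over multi-indices $\beta$. The goal is precisely to show that $a_{i,\beta} = 0$ whenever $|\beta| \ge 2$; once this is established, every component $Q_i$ has degree at most one in the $x$-variables, so $\alpha$, being an automorphism all of whose components are affine, lies in $\Af_n(\Ik[t])$ by definition.

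Next I would unwind the specialization hypothesis. For each $c \in \Ik$, the component $(x_i)\alpha_c$ is obtained from $Q_i$ by substituting $t \mapsto c$, so its coefficient of the monomial $x^\beta$ is exactly $a_{i,\beta}(c)$. Since $\alpha_c \in \Af_n(\Ik)$ by hypothesis, each such component has degree at most one in $x$, forcing $a_{i,\beta}(c) = 0$ for all $\beta$ with $|\beta| \ge 2$. Crucially, the monomials $x^\beta$ are $\Ik$-linearly independent, so there is no cancellation to worry about: each higher-degree coefficient vanishes individually at $t = c$.

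Finally I would invoke the infinitude of $\Ik$. Fixing $i$ and a multi-index $\beta$ with $|\beta| \ge 2$, the one-variable polynomial $a_{i,\beta}(t) \in \Ik[t]$ vanishes at every $c \in \Ik$; as $\Ik$ is infinite, this forces $a_{i,\beta} = 0$ in $\Ik[t]$. Repeating this for every $i$ and every $\beta$ with $|\beta| \ge 2$ yields the desired conclusion.

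I do not anticipate a genuine obstacle here: the lemma is essentially a specialization argument, and the only place the standing hypothesis that $\Ik$ has characteristic zero (hence is infinite) is used is the last step, where a nonzero polynomial cannot vanish on an infinite set. The one minor care-point is to confirm that specialization is compatible with the coordinate expansion — that is, that substituting $t = c$ commutes with reading off the coefficient of a fixed monomial $x^\beta$ — but this is immediate from the definition of the specialization homomorphism $\GA_n(\Ik[t]) \to \GA_n(\Ik)$.
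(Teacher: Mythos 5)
Your argument is correct and is essentially identical to the paper's proof: both expand each component as $\sum_v a_{i,v}(t)x^v$, observe that the affineness of every specialization $\alpha_c$ forces $a_{i,v}(c)=0$ for all $c$ whenever $|v|\ge 2$, and conclude $a_{i,v}=0$ because $\Ik$ is infinite (the paper phrases this contrapositively, choosing a $c$ where a putative nonzero coefficient does not vanish). No gap.
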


\begin{proof}
For each $1 \leq i \leq n$, write $\alpha (x_i) = \sum _{v \in \IN^n} a _{i,v}(t) x^v $ for some $a _{i,v} \in \Ik[t]$.  If $a _{i,v}(t)$ is nonzero for some monomial $x^v$ of degree greater than 1, we can choose $c \in \Ik$ such that $a_{i,v}(c) \neq 0$, contradicting that $\alpha _c \in \Af_n(\Ik)$.  Thus we must have $\alpha \in \Af_n(\Ik[t])$.
\end{proof}

\begin{corollary}\label{cor:affinet}
Let $\phi\in\GA_n(\Ik)$.  Then $\phi$ is translation degenerate in $x_1$ if and only if  $\phi^{-1}\theta_{1,t}\phi\in\Af_n(\Ik[t])$.
\end{corollary}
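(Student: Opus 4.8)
The plan is to apply the preceding lemma to the automorphism $\alpha = \phi^{-1}\theta_{1,t}\phi \in \GA_n(\Ik[t])$. The crucial observation that makes this work is that, because $\phi$ is defined over $\Ik$ and does not involve the indeterminate $t$, specializing $\alpha$ at $t \mapsto c$ yields precisely $\alpha_c = \phi^{-1}\theta_{1,c}\phi$. Under this identification, the definition of translation degeneracy in $x_1$ (Definition \ref{def:td}) reads exactly as the condition ``$\alpha_c \in \Af_n(\Ik)$ for every $c \in \Ik$,'' which is the hypothesis of the preceding lemma on one side and the specialization of its conclusion on the other. Thus both implications reduce to matching up the two ways of passing between $\Ik[t]$ and $\Ik$.

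Concretely, for the forward direction I would assume $\phi$ is translation degenerate in $x_1$, so that $\alpha_c = \phi^{-1}\theta_{1,c}\phi \in \Af_n(\Ik)$ for all $c \in \Ik$; the preceding lemma then immediately yields $\alpha = \phi^{-1}\theta_{1,t}\phi \in \Af_n(\Ik[t])$. For the converse I would assume $\alpha \in \Af_n(\Ik[t])$ and note that specialization cannot raise the degree of the components in $x_1,\ldots,x_n$, so each $\alpha_c$ still has components of degree at most one; since $\alpha_c = \phi^{-1}\theta_{1,c}\phi$ is a conjugate of an automorphism it is itself an automorphism, hence $\alpha_c \in \Af_n(\Ik)$ for every $c$, which is exactly the assertion that $\phi$ is translation degenerate in $x_1$.

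The only point requiring care --- and the one I would verify explicitly --- is the compatibility of specialization with composition, namely that $(\phi^{-1}\theta_{1,t}\phi)_c = \phi_c^{-1}\theta_{1,c}\phi_c = \phi^{-1}\theta_{1,c}\phi$. This holds because the evaluation homomorphism $\Ik[t]^{[n]} \to \Ik^{[n]}$ sending $t \mapsto c$ and fixing each $x_i$ commutes with the polynomial substitution that underlies composition of automorphisms, combined with the fact that $\phi$ is independent of $t$ so that $\phi_c = \phi$. I do not anticipate any substantive obstacle beyond this routine bookkeeping: once the specialization map is seen to respect composition, both directions follow at once from the preceding lemma and the elementary observation that specializing an affine map over $\Ik[t]$ produces an affine map over $\Ik$.
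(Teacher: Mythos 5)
Your proof is correct and matches the paper's intent: the corollary is stated as an immediate consequence of the preceding specialization lemma, applied to $\alpha=\phi^{-1}\theta_{1,t}\phi$ exactly as you do, with the forward direction given by the lemma and the converse by the trivial observation that specializing an affine map over $\Ik[t]$ yields an affine map over $\Ik$. Your explicit check that specialization commutes with composition and that $\phi_c=\phi$ is the same routine bookkeeping the paper leaves implicit.
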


Now, we proceed with trying to describe translation degenerate maps in sufficient detail.  Our goal is to prove a factorization theorem, Theorem \ref{thm:tdfactor}, which will allow us to prove in 
Theorem~\ref{thm:tdcotame} that all translation degenerate maps are either affine or \cotame.  We begin with two lemmas.
The first idea is to use Taylor's theorem to interpret the assumption \emph{$\phi$ is translation degenerate in $x_1$}
as a differential equation and then solve this equation.

\begin{lemma} \label{lem:nilpotent}
Let $\phi \in \GA_n(\Ik)$ be translation degenerate in $x_1$.  
Then, writing $\phi^{-1} = (H_1,\ldots,H_n)$, there exist $a _{i,j,k}, b _{i,k} \in \Ik$ such that $$\frac{\partial ^k H_i}{\partial x_1 ^k} =  \sum _{j=1} ^n a _{i,j,k} H_j + b _{i,k}$$ for each $1 \leq i \leq n$ and $k \geq 0$.  Moreover, letting $A=(a_{i,j,1}) \in \M_n(\Ik)$ and $B=(b _{j,1}) \in \M_{n,1}(\Ik)$, $A$ is nilpotent; and for each $k>1$, we have 
\begin{align*}
a _{i,j,k} &=  (A^k)_{i,j} \\
b _{i,k} &=  (A^{k-1} B )_{i,1}.
\end{align*}
\end{lemma}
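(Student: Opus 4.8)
The plan is to convert the hypothesis, via Corollary~\ref{cor:affinet}, into the single statement that $\phi^{-1}\theta_{1,t}\phi\in\Af_n(\Ik[t])$, and then to extract the claimed identities by comparing coefficients of powers of $t$. First I would write $\phi=(G_1,\ldots,G_n)$ and compute, using the paper's right-action composition convention, that $(x_i)(\phi^{-1}\theta_{1,t}\phi)=H_i(G_1+t,G_2,\ldots,G_n)$. Expanding the right-hand side in $t$ by Taylor's theorem gives
$$(x_i)\left(\phi^{-1}\theta_{1,t}\phi\right)=\sum_{k\ge0}\frac{t^k}{k!}\left(\frac{\partial^k H_i}{\partial x_1^k}\right)\phi,$$
a finite sum because each $H_i$ is a polynomial, where $(\,\cdot\,)\phi$ denotes the substitution $x_j\mapsto G_j$.

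Since the left-hand side lies in $\Af_n(\Ik[t])$ and the monomials $t^k$ are linearly independent over $\Ik^{[n]}$, each coefficient $\left(\frac{\partial^k H_i}{\partial x_1^k}\right)\phi$ must be affine in $x_1,\ldots,x_n$. Applying $\phi^{-1}$ sends an affine polynomial $\sum_j c_jx_j+d$ to $\sum_j c_jH_j+d$, so I obtain $\frac{\partial^k H_i}{\partial x_1^k}=\sum_j a_{i,j,k}H_j+b_{i,k}$ with scalars $a_{i,j,k},b_{i,k}\in\Ik$, which is the first assertion. A point worth recording at this stage is that these scalars are \emph{uniquely} determined: $H_1,\ldots,H_n$ are algebraically independent, so any relation $\sum_j c_jH_j+d=0$ becomes $\sum_j c_jx_j+d=0$ after applying $\phi$, forcing all coefficients to vanish.

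For the \emph{moreover} statement I would set up a recursion by differentiating the degree-$(k-1)$ identity in $x_1$ and inserting the degree-$1$ identity:
$$\frac{\partial^k H_i}{\partial x_1^k}=\sum_j a_{i,j,k-1}\frac{\partial H_j}{\partial x_1}=\sum_{l}\Big(\sum_j a_{i,j,k-1}a_{j,l,1}\Big)H_l+\sum_j a_{i,j,k-1}b_{j,1}.$$
Writing $A_k=(a_{i,j,k})$ and $B_k=(b_{i,k})$, uniqueness identifies this with the degree-$k$ identity and gives $A_k=A_{k-1}A$ and $B_k=A_{k-1}B$; induction from $A_1=A$, $B_1=B$ then yields $A_k=A^k$ and $B_k=A^{k-1}B$, i.e.\ the stated formulas. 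Nilpotence of $A$ falls out of the same bookkeeping: for $k>\deg_{x_1}H_i$ the left-hand side is $0$, so uniqueness forces $(A^k)_{i,j}=0$ for all $j$; choosing $k$ larger than every $\deg_{x_1}H_i$ gives $A^k=0$.

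The calculations here are all routine; the genuine content---and the step I would be most careful about---is the uniqueness/algebraic-independence argument, since it is used twice (to pin down the coefficients in the recursion, and to conclude nilpotence) and it is what turns the otherwise formal manipulation of Taylor coefficients into a rigorous argument. The only other place to watch is getting the composition direction right in the very first computation of $(x_i)(\phi^{-1}\theta_{1,t}\phi)$, where the right-action convention must be tracked carefully.
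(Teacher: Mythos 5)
Your proposal is correct and follows essentially the same route as the paper: invoke Corollary~\ref{cor:affinet}, Taylor-expand in $t$, compare coefficients of $t^k$ to get the affine relations, then derive the matrix formulas by differentiating the $(k-1)$-st identity and conclude nilpotence from the vanishing of high-order derivatives. The only (welcome) difference is that you make explicit the uniqueness of the coefficients via algebraic independence of $H_1,\ldots,H_n$, a point the paper's proof uses implicitly.
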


\begin{proof}
Write $\phi ^{-1} = (H_1,\ldots, H_n)$, and let $\theta =\theta_{1,t}\in \Af_n(\Ik[t])$, so we have
$$(x_i) \phi ^{-1} \theta = H_i(x_1+t,x_2,\ldots,x_n) = \sum _{k=0} \frac{t^k}{k!} \frac{\partial ^k H_i}{\partial x_1 ^k}.$$
By Corollary \ref{cor:affinet}, we may write $\phi ^{-1} \theta = \alpha \phi ^{-1}$ for some $\alpha \in \Af_n(\Ik[t])$.  For each $1 \leq i \leq n$, write $ (x_i)\alpha = \sum _{j=1} ^n a _{i,j}(t)x_j + b _i(t)$ for some $a _{i,j}(t), b _i(t) \in \Ik[t]$.  Then we have for each $1 \leq i \leq n$

$$\sum _{k=0} \frac{t^k}{k!} \frac{\partial ^k H_i}{\partial x_1 ^k}  = \sum _{j=1} ^n a _{i,j}(t) H_j + b _i(t).$$
Write $a _{i,j}(t) = \sum _{k=0} a _{i,j,k} \frac{ t^k}{k!}$ and $b_i (t) = \sum _{k=0} b _{i,k} \frac{t^k}{k!}$.  Then we see 
$$\sum _{k=0} \frac{t^k}{k!} \frac{\partial ^k H_i}{\partial x_1 ^k}  = \sum _{k=0} \frac{ t^k}{k!} \left(\sum _{j=1} ^n a _{i,j,k} H_j + b _{i,k}\right).$$
So in particular, we have for each $k \geq 0$, $$ \frac{\partial ^k H_i}{\partial x_1 ^k}  = \sum _{j=1} ^n a _{i,j,k} H_j + b _{i,k}.$$

We prove the formula for $a _{i,j,k}$ and $\beta _{i,k}$ by induction on $k$.  We compute
\begin{align*}
\frac{\partial ^k H_i}{\partial x_1 ^k} &= \frac{\partial}{\partial x_1} \left( \sum _{j=1} ^n a _{i,j,k-1} H_j + b _{i,k} \right) \\
&= \sum _{j=1} ^n a _{i,j,k-1} \left( \sum _{l=1} ^n a _{j,l,1}H_l+b _{j,1}\right) \\
&= \sum _{l=1} ^n H_l \left( \sum _{j=1} ^n (A^{k-1})_{i,j} a _{j,l,1} \right) + \sum _{j=1} ^n  (A^{k-1})_{i,j} b _{j,1} \\
&= \sum _{l=1} ^n H_l (A^k)_{i,l} + (A^{k-1}B)_{i,1}.
\end{align*}
Finally, we observe that this formula immediately implies that $A=(a_{i,j,1})$ is nilpotent, as $a_{i,1,k}=\cdots=a_{i,n,k}=0$ whenever $k>\deg_{x_1} H_i$.
\end{proof}

\begin{lemma}\label{lem:nilpotentJordan}
Let $\phi \in \GA_n(\Ik)$ be translation degenerate in $x_1$.  
There exists $\lambda \in \GL_n(\Ik)$ such that writing $(\phi \lambda)^{-1} = (H_1,\ldots,H_n)$ and  $\frac{\partial ^k H_i}{\partial x_1 ^k} = \sum _{j=1} ^n a _{i,j,k} H_j + b _{i,k}$ as in Lemma \ref{lem:nilpotent}, we have  $(a _{i,j,1}) \in M_n(\Ik)$ is a nilpotent lower triangular Jordan matrix with $b _{1,1}\neq 0$.  
\end{lemma}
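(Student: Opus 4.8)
The plan is to reduce the statement to a problem in linear algebra about the pair $(A,B)$ attached to $\phi$ by Lemma~\ref{lem:nilpotent}, where $A=(a_{i,j,1})$ and $B=(b_{i,1})$. First I would record how this pair transforms under the substitution $\phi\rightsquigarrow\phi\lambda$. Writing $\lambda\in\GL_n(\Ik)$ as the linear automorphism with matrix $L$ (so $(x_i)\lambda=\sum_j L_{i,j}x_j$), one has $(\phi\lambda)^{-1}=\lambda^{-1}\phi^{-1}$, and a direct computation shows that if $\phi^{-1}=(H_1,\ldots,H_n)$ then $(\phi\lambda)^{-1}=(\tilde H_1,\ldots,\tilde H_n)$ with $\tilde H=L^{-1}H$ (viewing $H,\tilde H$ as column vectors). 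Differentiating the relation $\partial H/\partial x_1=AH+B$ then gives $\partial\tilde H/\partial x_1=(L^{-1}AL)\tilde H+L^{-1}B$; that is, the pair attached to $\phi\lambda$ is $(L^{-1}AL,\,L^{-1}B)$. Thus the lemma amounts to finding $L\in\GL_n(\Ik)$ with $L^{-1}AL$ a lower triangular nilpotent Jordan matrix and $(L^{-1}B)_1\neq 0$. Since nilpotent matrices admit a Jordan form over any field, the only real content is arranging the first coordinate of $L^{-1}B$ to be nonzero, and the crux of this is the claim that $B\notin\mathrm{im}(A)$.

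The key step is therefore to prove $B\notin\mathrm{im}(A)$, and here I would argue by a fixed-point obstruction. By Lemma~\ref{lem:nilpotent} (together with Corollary~\ref{cor:affinet}), for $\theta=\theta_{1,t}$ we have $\phi^{-1}\theta\phi=\alpha_t\in\Af_n(\Ik[t])$, where $\alpha_t$ is the affine automorphism with linear part $e^{tA}=\sum_{k\ge 0}\frac{t^k}{k!}A^k$ and translation part $c(t)=\Phi(t)B$, with $\Phi(t)=\sum_{k\ge 1}\frac{t^k}{k!}A^{k-1}$; indeed, summing the identities $a_{i,j,k}=(A^k)_{i,j}$ and $b_{i,k}=(A^{k-1}B)_{i,1}$ of Lemma~\ref{lem:nilpotent} against $t^k/k!$ yields precisely these. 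Now fix any $t\in\Ik^*$. The elements $\theta_{1,t}$ and $\alpha_t$ are conjugate in $\GA_n(\Ik)$, hence have the same (possibly empty) fixed-point locus in $\A^n$; since a nonzero translation $\theta_{1,t}$ is fixed-point free, so is $\alpha_t$. But an affine automorphism with linear part $M$ and translation $v$ has a fixed point if and only if $v\in\mathrm{im}(I-M)$, so we conclude $c(t)\notin\mathrm{im}(I-e^{tA})$. Writing $\sum_{k\ge 1}\frac{t^k}{k!}A^k=tA\,V(t)$ with $V(t)=I+\frac{t}{2}A+\cdots$ unipotent (hence invertible), one gets $\mathrm{im}(I-e^{tA})=\mathrm{im}(A)$ and $c(t)=\Phi(t)B=tV(t)B$; since $V(t)$ preserves $\mathrm{im}(A)$, the condition $c(t)\notin\mathrm{im}(A)$ forces $B\notin\mathrm{im}(A)$.

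With $B\notin\mathrm{im}(A)$ in hand, the construction of $L$ is routine. Choose any basis putting $A$ into lower triangular Jordan form; its vectors split into Jordan chains, and the chain generators (the first vector of each block) span a complement of $\mathrm{im}(A)$, because $\mathrm{im}(A)$ is spanned by the remaining basis vectors. As $B\notin\mathrm{im}(A)$, its component along this complement is nonzero, so at least one generator $g$ occurs with nonzero coefficient when $B$ is expressed in the Jordan basis. I would then reorder the blocks so that the chain of $g$ comes first—this is again a lower triangular Jordan form—and take $L$ to be the matrix whose columns are this reordered Jordan basis. Then $L^{-1}AL$ is a lower triangular nilpotent Jordan matrix, and $(L^{-1}B)_1$, being the coefficient of $B$ on the first basis vector $g$, is nonzero, as required.

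The main obstacle is the middle step, establishing $B\notin\mathrm{im}(A)$: without it the desired $\lambda$ simply need not exist (for instance if $B$ were zero, no choice of $L$ could make the first coordinate of $L^{-1}B$ nonzero). The fixed-point argument is precisely what rules this out, converting the input that $\phi^{-1}$ is a genuine automorphism into the algebraic conclusion that the translation part of $\alpha_t$ escapes $\mathrm{im}(A)$. By comparison, the transformation rule for $(A,B)$ and the Jordan reordering are bookkeeping.
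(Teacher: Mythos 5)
Your proof is correct, and while the reduction steps match the paper's, your argument for the crucial non-degeneracy $b_{1,1}\neq 0$ is genuinely different. Both proofs share the bookkeeping: replacing $\phi$ by $\phi\lambda$ conjugates the pair $(A,B)$ of Lemma~\ref{lem:nilpotent} into $(L^{-1}AL,\,L^{-1}B)$, nilpotency of $A$ gives a lower triangular Jordan form over $\Ik$, and the whole content is that $B\notin\mathrm{im}(A)$, i.e.\ that $B$ has a nonzero component on some chain generator, after which permuting the Jordan blocks finishes the job. For that key fact the paper argues via the Jacobian: with $A$ in Jordan form the first column of the Jacobian matrix of $(H_1,\ldots,H_n)$ has entries $b_{i,1}+a_{i,i-1,1}H_{i-1}$, so the (nonzero constant) Jacobian determinant lies in the ideal these generate; evaluating at a point where $H_{i-1}=-b_{i,1}/a_{i,i-1,1}$ whenever $a_{i,i-1,1}\neq 0$ shows the block-leading coefficients $b_{i_j,1}$ cannot all vanish. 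You instead note that $\phi^{-1}\theta_{1,c}\phi$ (for $c\in\Ik^*$) is conjugate to a nonzero translation, hence fixed-point free, so its translation part $cV(c)B$ must avoid $\mathrm{im}\bigl(I-e^{cA}\bigr)=\mathrm{im}(A)$, and since the unipotent $V(c)$ is an invertible polynomial in $A$ this forces $B\notin\mathrm{im}(A)$. Your route converts invertibility of $\phi$ into fixed-point-freeness rather than into nonvanishing of the Jacobian; it is arguably more conceptual and avoids the paper's point-evaluation step, while the paper's version stays entirely inside the derivative identities already recorded in Lemma~\ref{lem:nilpotent}. Either way the lemma follows.
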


\begin{proof}
As in Lemma \ref{lem:nilpotent}, let $\theta = \theta_{1,t} \in \Af_n(\Ik[t])$, and write $\phi ^{-1} \theta = \alpha \phi ^{-1}$ for some $\alpha \in \Af_n(\Ik[t])$.  Note that for any $\lambda \in \GL_n(\Ik)$ we have $(\phi \lambda) ^{-1} \theta = \lambda ^{-1} \alpha  \lambda ( \phi \lambda)^{-1}$.  Thus, simply choose $\lambda \in \GL_n(\Ik)$ such that $\lambda ^{-1} (a _{i,j,1}) \lambda$ is in lower triangular Jordan form.

To see that we can also take $b _{1,1} \neq 0$, note that by expanding along the $r$-th column, we see that the the unital Jacobian determinant $J( H_1,\ldots,H_n)$ is contained in the ideal $\left( b _{1,1}, b _{1,2} + a _{2,1,1}H_1, \ldots, b _{1,n}+a _{n,n-1,1} H_{n-1}\right)$.  Choose a point $(c_1,\ldots,c_n) \in \Ik^n$ such that $H_i (c_1,\ldots,c_n) = -\frac{b _{1,i+1}}{a_{i+1,i}}$ whenever $a _{i+1,i} \neq 0$.  Going modulo the ideal $(x_1-c_1,\ldots,x_n-c_n)$, we have $\overline{J(H_1,\ldots,H_n)} \in (b _{i_0,1},\ldots,b _{i_k,1})$ where $1=i_0<i_1<\cdots<i_k$ are the indices of the first row in each Jordan block (i.e., $a _{i,i-1}=0$).  Therefore, not all $b _{i_j,1}$ are zero, so simply permute the Jordan blocks to obtain $b _{1,1} \neq 0$.
\end{proof}

We next define a special type of translation degenerate automorphism that appears in our factorization theorem.  These can be thought of as a generalization of Derksen's map (see Example \ref{ex:Derksen} below).

\begin{definition}\label{def:ttd}
A triangular automorphism $\tau\in\BA_n(\Ik)$ is said to be in {\em triangular translation degenerate form (TTD form)} 
if there exist $b_2,\ldots,b_n\in\Ik$ ($b_1=1$) and $d_2,\ldots,d_n\in\{0,1\}$ ($d_1=0$)
such that $(x_1)\tau=x_1$ and for each $2\le i\le n$:
$$(x_k)\tau=x_k+\sum_{r=1}^{k-1}\frac{(-1)^r x_1^r}{r!}\left(d_{k-r+1,k}\,x_{k-r}+d_{k-r+2,k}\,b_{k-r+1}\right)
+\frac{(-1)^k d_{2,k} x_1 ^k}{k!}.$$
where (for $2\leq k,j\leq n$):
 $$ d_{j,k}=\prod_{i=j}^k{d_i}=
 \begin{cases}       
  1           & {\rm if\ } 2\le k<j \\ 
d_j\cdots d_k & {\rm if\ } 2\le j\le k 
\end{cases}.$$
\end{definition}

\begin{example} \label{ex:Derksen}The triangular automorphism $(x_1,x_2-\frac{1}{2}x_1^2,x_3,\ldots,x_n)$, which is affinely equivalent to Derksen's map, is in TTD form with $d_2=1$, $d_3=\cdots=d_{n}=0$ ($d_1=0$), and $b_2=\cdots=b_n=0$ ($b_1=1$).
\end{example}

\begin{example}
The triangular automorphism in Example~\ref{ex:ttd} is in TTD form with $b_2=b_3=0$ ($b_1=1$) and $d_2=d_3=1$ ($d_1=0$).
\end{example}

\begin{remark}We make few remarks about this definition:
\begin{itemize}
\item Given a triangular automorphism $\tau\in\BA_n(\Ik)$ is in TTD we always  consider the parameters  
$b_2,\ldots,b_n\in\Ik$ ($b_1=1$) and $d_2,\ldots,d_n\in\{0,1\}$ ($d_1=0$)  as in Definition~\ref{def:ttd}
without explicitly mentioning it. 
\item A triangular automorphism $\tau \in \BA_n(\Ik)$ is in TTD form if and only if  
there exist $b_2,\ldots,b_n\in\Ik$ ($b_1=1$) and $d_2,\ldots,d_n\in\{0,1\}$ ($d_1=0$) such that $\tau=\nu\exp(-x_1D)$, 
where $D \in \LND _{\Ik[x_1]} \Ik^{[n]}$ is the triangular derivation given by $D(x_k)=d_kx_{k-1}+b_k$ and 
$\nu \in \BA_{n}(\Ik)$ is given by 
$(x_1)\nu=x_1$ and $(x_k)\nu = x_k + \frac{ (-1)^k}{k!}d_{2,k}\,x_1^k$ for all $2\le k\le n$. 
\item Given an integer $1\le k\le n$ and a triangular automorphism $\tau\in\BA_n(\Ik)$, 
$(x_k)\tau$ is affine if and only if $d_k=0$ and in this case $(x_k)\tau=x_k-b_kx_1$.
Thus $\tau$ is affine if and only if $d_k=0$ for all $1\le k\le n$.
\item As the name suggests, every triangular automorphism in TTD form is translation degenerate in $x_1$.  We don't actually require this fact, so we omit the proof which consists of a straightforward but somewhat tedious calculation.
\end{itemize}
\end{remark}

\begin{theorem}\label{thm:ttd}
Let $\phi \in \GA_n(\Ik)$ be an automorphism such that $\phi ^{-1} = (H_1,\ldots,H_n)$ and  $\frac{\partial  H_i}{\partial x_1 } = \sum _{j=1} ^n a _{i,j} H_j + b _{i}$ for some nilpotent lower triangular Jordan matrix  $(a _{i,j}) \in M_n(\Ik)$ and $b _{i} \in \Ik$ with $b _{1}=1$. 
Then there exists $\tau \in \BA_n(\Ik)$ in TTD form such that $\tau \phi ^{-1} = (H_1,G_2,\ldots,G_n)$ for some $G_i \in \Ik[x_2,\ldots,x_n]$.
\end{theorem}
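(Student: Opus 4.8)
The plan is to recognize the differential equation in the hypothesis as the statement that, in the coordinate system given by $\phi^{-1}=(H_1,\ldots,H_n)$, the operator $\frac{\partial}{\partial x_1}$ corresponds to a triangular locally nilpotent derivation admitting $x_1$ as a slice; the desired TTD automorphism is then exactly the associated Dixmier projection onto the kernel of that derivation.

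First I would decode the hypothesis. Since $(a_{i,j})$ is a nilpotent lower triangular Jordan matrix, its only possibly nonzero entries are the subdiagonal ones $a_{i,i-1}\in\{0,1\}$. Writing $d_i:=a_{i,i-1}$ (so $d_1=0$), the equation $\frac{\partial H_i}{\partial x_1}=\sum_j a_{i,j}H_j+b_i$ becomes $\frac{\partial H_i}{\partial x_1}=d_iH_{i-1}+b_i$ (with $H_0=0$), and in particular $\frac{\partial H_1}{\partial x_1}=b_1=1$. I then introduce the triangular derivation $\hat D\in\LND_\Ik\Ik^{[n]}$ defined by $\hat D(x_i)=d_ix_{i-1}+b_i$, so that $\hat D(x_1)=1$ and hence $x_1$ is a slice. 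The key observation is the chain rule identity: for any $T\in\Ik^{[n]}$,
$$\frac{\partial}{\partial x_1}\big((T)\phi^{-1}\big)=\sum_{j=1}^n\frac{\partial T}{\partial x_j}(H_1,\ldots,H_n)\,\frac{\partial H_j}{\partial x_1}=\big(\hat D(T)\big)\phi^{-1},$$
using $\frac{\partial H_j}{\partial x_1}=\big(\hat D(x_j)\big)\phi^{-1}$. Since $\phi^{-1}$ is an automorphism and $\Ik$ has characteristic zero, $(T)\phi^{-1}\in\Ik[x_2,\ldots,x_n]$ if and only if $\hat D(T)=0$.

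This reduces the theorem to producing a TTD automorphism $\tau$ with $(x_1)\tau=x_1$ and $(x_i)\tau\in\ker\hat D$ for $2\le i\le n$: indeed $(x_1)(\tau\phi^{-1})=H_1$, while $(x_i)\tau\in\ker\hat D$ forces $G_i:=(x_i)(\tau\phi^{-1})\in\Ik[x_2,\ldots,x_n]$. I would build $\tau$ from the Dixmier projection $\pi(P):=\sum_{r\ge0}\frac{(-x_1)^r}{r!}\hat D^r(P)$ (a finite sum, as $\hat D$ is locally nilpotent), setting $(x_1)\tau=x_1$ and $(x_i)\tau=\pi(x_i)$ for $i\ge2$. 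Computing $\hat D^r(x_k)=d_{k-r+1,k}\,x_{k-r}+d_{k-r+2,k}\,b_{k-r+1}$ for $1\le r\le k-1$, together with $\hat D^k(x_k)=d_{2,k}$ and $\hat D^r(x_k)=0$ for $r>k$, one checks that $\pi(x_k)$ agrees with the defining formula of TTD form (Definition \ref{def:ttd}) with parameters $d_k=a_{k,k-1}$ and the given $b_k$; in particular $\pi(x_k)=x_k+(\text{terms in }x_1,\ldots,x_{k-1})$, so $\tau$ is unitriangular, hence lies in $\BA_n(\Ik)$ and is in TTD form. Finally, $\pi(x_k)\in\ker\hat D$ follows from a direct telescoping using $\hat D(x_1)=1$:
$$\hat D\big(\pi(x_k)\big)=-\sum_{r\ge1}\frac{(-x_1)^{r-1}}{(r-1)!}\hat D^r(x_k)+\sum_{r\ge0}\frac{(-x_1)^r}{r!}\hat D^{r+1}(x_k)=0,$$
the two sums cancelling after reindexing $r\mapsto r-1$ in the first. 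Combining, $\tau\phi^{-1}=(H_1,G_2,\ldots,G_n)$ with each $G_i\in\Ik[x_2,\ldots,x_n]$, as required.

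I expect the work here to be bookkeeping rather than conceptual: the heart of the argument, namely identifying $\frac{\partial}{\partial x_1}$ with the slice derivation $\hat D$ and the TTD form with its Dixmier projection, is clean, and the hypothesis $b_1=1$ is precisely what guarantees $\hat D(x_1)=1$, i.e. that $x_1$ is a slice. The main obstacle will be the explicit verification that $\pi(x_k)$ matches Definition \ref{def:ttd} term by term: this requires correctly tracking the products $d_{j,k}$ and, separately, the top-degree term $\frac{(-1)^kd_{2,k}}{k!}x_1^k$ arising from $\hat D^k(x_k)=d_{2,k}$. This last term is exactly the discrepancy between $\hat D$ (with $\hat D(x_1)=1$) and the derivation $D$ (with $D(x_1)=0$) appearing in the remark following Definition \ref{def:ttd}, where it is absorbed into the correction factor $\nu$.
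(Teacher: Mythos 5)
Your proof is correct, and at bottom it is the same computation as the paper's, just transported to the other side of $\phi^{-1}$. The paper fixes the same $\tau$ (reading off $d_i=a_{i,i-1}$), writes out $G_k=(x_k)\tau\phi^{-1}$ explicitly in terms of the $H_i$, and kills $\frac{\partial G_k}{\partial x_1}$ by exactly the telescoping you perform; its key identity
$\frac{\partial}{\partial x_1}\left(d_{k-r+1,k}H_{k-r}+d_{k-r+2,k}b_{k-r+1}\right)=d_{k-r,k}H_{k-r-1}+d_{k-r+1,k}b_{k-r}$
is precisely your statement that $\frac{\partial}{\partial x_1}$ acting on the $H_i$ corresponds to $\hat D$ acting on the $x_i$, evaluated on $\hat D^r(x_k)$. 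What your packaging buys is conceptual: by recognizing $b_1=1$ as saying $x_1$ is a slice of the triangular derivation $\hat D$, and the TTD automorphism as the Dixmier projection $P\mapsto\sum_r\frac{(-x_1)^r}{r!}\hat D^r(P)$ applied to the coordinates, you explain \emph{why} Definition \ref{def:ttd} has the form it does (the paper only gestures at this in the remark writing $\tau=\nu\exp(-x_1D)$), and the vanishing $\hat D(\pi(x_k))=0$ becomes the standard slice computation rather than an ad hoc cancellation. The cost is that you must still verify, term by term, that $\hat D^r(x_k)=d_{k-r+1,k}x_{k-r}+d_{k-r+2,k}b_{k-r+1}$ reproduces the displayed formula in Definition \ref{def:ttd} — which is the same bookkeeping the paper does — so neither route is shorter; yours is simply better organized.
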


\begin{proof} We consider $\phi \in \GA_n(\Ik)$ as in Theorem~\ref{thm:ttd}.
We set $d_i=a_{i,i-1} \in \{0,1\}$ for $2\le i\le n$ and $d_1=0$.  
Then by assumption, we have:
\begin{align*}
 \frac{\partial}{\partial x_1} H_k &=d_k  H_{k-1}+b_k
\end{align*}
for all $1\le k\le n$.
By induction, we deduce
\begin{align}\label{eq:H}
 \frac{\partial}{\partial x_1} \left( d_{k-r+1,k}H_{k-r}+d_{k-r+2,k}b_{k-r+1}\right) &=d_{k-r,k}H_{k-r-1}+d_{k-r+1,k}b_{k-r}
\end{align}
for all $1\le k\le n$ and $1\le r\le k-1$.

We remark that the convention $d_1=0$ gives $\frac{\partial}{\partial x_1} H_1=1$.
We consider $\tau\in\BA_n(\Ik)$ in TTD form with parameters $b_2,\ldots,b_n\in\Ik$ ($b_1=1$) 
and $d_2,\ldots,d_n\in\{0,1\}$ ($d_1=0$).
For any $1 \leq k \leq n$, we set $G_{k} =(x_{k}) \tau \phi ^{-1}$.  
Since $(x_1)\tau = x_1$, we have $(x_1)\tau \phi ^{-1} = H_1$.  
For each $2 \leq k \leq n$, we have:
$$G_k = H_k +\sum _{r=1} ^{k-1} \frac{(-1)^r H_1^r}{r!}\left( d_{k-r+1,k}H_{k-r}+d_{k-r+2,k}b_{k-r+1}\right)
+\frac{(-1)^k d_{2,k}H_1 ^k}{k!}.$$
Let $2 \leq k \leq n$. We compute $\frac{\partial }{\partial x_1}G_k$, first using the derivative of a product
and the formula~(\ref{eq:H}) and then changing the variable $r$ to $s=r-1$ in the first sum: 
\begin{align*}
\frac{\partial G_k}{\partial x_1} &=  d_k H_{k-1}+b_k+ \sum _{r=1} ^{k-1} \frac{(-1)^r H_1 ^{r-1}}{(r-1)!} \left( d_{k-r+1,k}H_{k-r}+d_{k-r+2,k}b_{k-r+1}\right)  + \\
&\phantom{xxx} \sum _{r=1} ^{k-1} \frac{(-1)^r H_1^r}{r!} (d_{k-r,k}H_{k-r-1}+d_{k-r+1,k}b_{k-r}) + \frac{(-1)^k d_{2,k} H_1 ^{k-1}}{(k-1)!} \\
&=  d_k H_{k-1}+b_k- \sum _{s=0} ^{k-2} \frac{(-1)^s H_1 ^s}{s!} \left( d_{k-s,k}H_{k-s-1}+d_{k-s+1,k}b_{k-s}\right)  + \\
&\phantom{xxx} \sum _{r=1} ^{k-1} \frac{(-1)^r H_1^r}{r!} (d_{k-r,k}H_{k-r-1}+d_{k-r+1,k}b_{k-r}) 
+ \frac{(-1)^k d_{2,k} H_1 ^{k-1}}{(k-1)!} \\
&= d_k H_{k-1}+b_k-d_k H_{k-1}-b_k + \frac{(-1)^{k-1} d_{2,k} H_1 ^{k-1}}{(k-1)!}+ \frac{(-1)^k d_{2,k} H_1 ^{k-1}}{(k-1)!} \\
&= 0.
\end{align*}
We deduce that $G_k \in \Ik[x_2,\ldots,x_n]$.
\end{proof}

We are finally ready to fully describe translation degenerate maps.

\begin{theorem} \label{thm:tdfactor}
Let $\phi \in \GA_n(\Ik)$ be translation degenerate.  
Then there exist $\lambda \in \GL_n(\Ik)$, $\rho \in \mathfrak{S}_n$, $\tau \in \BA_n(\Ik)$, 
and $G,G_2,\ldots,G_n \in \Ik[x_2,\ldots,x_n]$ such that setting 
\begin{align*}
\mu &= (x_1+G(x_2,\ldots,x_n),x_2,\ldots,x_n) \\
\gamma &= (x_1,G_2(x_2,\ldots,x_n),\ldots,G_n(x_2,\ldots,x_n) )
\end{align*} we have  $\phi ^{-1} = \lambda \tau ^{-1} \gamma \mu \rho$.  Moreover, $\tau $ is in TTD form.
\end{theorem}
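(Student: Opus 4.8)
The plan is to reduce to the case $r=1$, normalize via Lemma~\ref{lem:nilpotentJordan}, peel off the TTD factor using Theorem~\ref{thm:ttd}, and then split what remains into the two prescribed pieces $\gamma$ and $\mu$. Since $\phi$ is translation degenerate, choose $r$ so that $\phi$ is translation degenerate in $x_r$ and set $\pi=(x_1\leftrightarrow x_r)\in\SG_n$. Recall that $\phi$ is translation degenerate in $x_r$ if and only if $\psi:=\pi\phi$ is translation degenerate in $x_1$, and since $\psi^{-1}=\phi^{-1}\pi^{-1}$ with $\pi^{-1}=\pi$, we get $\phi^{-1}=\psi^{-1}\pi$. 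Thus it suffices to factor $\psi^{-1}=\lambda\tau^{-1}\gamma\mu$ and then take $\rho=\pi$.

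Next I would apply Lemma~\ref{lem:nilpotentJordan} to $\psi$ to obtain $\lambda_0\in\GL_n(\Ik)$ so that, writing $(\psi\lambda_0)^{-1}=(H_1,\ldots,H_n)$, the matrix $A=(a_{i,j,1})$ is a nilpotent lower triangular Jordan matrix and $b_{1,1}\neq 0$. To meet the hypothesis $b_1=1$ of Theorem~\ref{thm:ttd}, I would post-compose with the scalar homothety $S=b_{1,1}\cdot\id\in\GL_n(\Ik)$: the components of $(\psi\lambda_0 S)^{-1}$ are $b_{1,1}^{-1}H_i$, and differentiating in $x_1$ shows that $A$ is unchanged while the constant vector scales by $b_{1,1}^{-1}$, so the new $b_1$ equals $1$. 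It is essential that the homothety be scalar rather than a rescaling of $x_1$ alone, since the latter would multiply the subdiagonal entry $a_{2,1,1}$ and destroy the Jordan normalization. Setting $\lambda=\lambda_0 S$ and $\eta=\psi\lambda$, we have $\psi^{-1}=\lambda\eta^{-1}$, and $\eta^{-1}=(H_1,\ldots,H_n)$ now satisfies exactly the hypotheses of Theorem~\ref{thm:ttd}. Applying that theorem yields $\tau\in\BA_n(\Ik)$ in TTD form with $\tau\eta^{-1}=(H_1,G_2,\ldots,G_n)$ for some $G_2,\ldots,G_n\in\Ik[x_2,\ldots,x_n]$.

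The final step is the splitting. Because $A$ is strictly lower triangular its first row vanishes, so the relation furnished by Lemma~\ref{lem:nilpotent} reads $\partial H_1/\partial x_1=b_1=1$; hence $H_1=x_1+G$ for some $G\in\Ik[x_2,\ldots,x_n]$. Setting $\mu=(x_1+G,x_2,\ldots,x_n)$ and $\gamma=(x_1,G_2,\ldots,G_n)$, a direct computation using $(P)(\gamma\mu)=((P)\gamma)\mu$ gives $\gamma\mu=(x_1+G,G_2,\ldots,G_n)=\tau\eta^{-1}$; here $\gamma=(\tau\eta^{-1})\mu^{-1}$ is automatically an automorphism of the required form. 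Therefore $\eta^{-1}=\tau^{-1}\gamma\mu$, and combining with $\psi^{-1}=\lambda\eta^{-1}$ and $\phi^{-1}=\psi^{-1}\pi$ yields $\phi^{-1}=\lambda\tau^{-1}\gamma\mu\pi$, so $\rho=\pi$ completes the factorization. The substantive content of the theorem lives in the cited Lemmas~\ref{lem:nilpotent}, \ref{lem:nilpotentJordan} and Theorem~\ref{thm:ttd}; the only real care needed here is the scalar normalization forcing $b_1=1$ and the resulting observation that $H_1$ is affine-linear in $x_1$ with unit leading coefficient, which is precisely what permits the clean split into $\gamma$ and $\mu$. I expect this normalization bookkeeping, rather than any new computation, to be the main obstacle.
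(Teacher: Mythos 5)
Your proof is correct and follows essentially the same route as the paper's: reduce to translation degeneracy in $x_1$ via a transposition absorbed into $\rho$, apply Lemma~\ref{lem:nilpotentJordan}, rescale so that $b_1=1$, invoke Theorem~\ref{thm:ttd} to extract $\tau$, and split $(H_1,G_2,\ldots,G_n)$ as $\gamma\mu$ using $\partial H_1/\partial x_1=1$. Your observation that the normalizing homothety should be scalar (rather than an arbitrary diagonal matrix, which could disturb the subdiagonal of the Jordan form) is a slightly more careful rendering of the paper's "altering $\lambda$ by a diagonal matrix," but it is the same argument.
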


\begin{proof}
First, note that an appropriate choice of $\rho \in \mathfrak{S}_n$ allows us to assume without loss of generality that $\phi$ is translation degenerate in $x_1$.
By Lemma \ref{lem:nilpotentJordan}, there exists $\lambda \in \GL_n(\Ik)$ such that $(\phi \lambda)^{-1} = (H_1,\ldots,H_n)$ and  $\frac{\partial  H_i}{\partial x_1 } = \sum _{j=1} ^n \alpha _{i,j} H_j + b _{i}$ where  $(a _{i,j}) \in M_n(\Ik)$ is a nilpotent Jordan matrix with $b _{1}\neq 0$.  Moreover, by altering $\lambda$ by a diagonal matrix, we may assume $b _1 = 1$.

By Theorem \ref{thm:ttd}, there exists $\tau \in \BA_n(\Ik)$ in TTD form such that  $\tau (\phi \lambda)^{-1} = (H_1,{G_2},\ldots,{G}_n)$ for some $G_i \in \Ik[x_2,\ldots,x_n]$..  Note that $\frac{\partial H_1}{\partial x_1}=1$, so $H_1=x_1+P(x_2,\ldots,x_n)$.  Then letting  $\mu = (x_1+P(x_2,\ldots,x_n),x_2,\ldots,x_n)$ and $\gamma = (x_1,{G_2},\ldots,{G}_n) \in \GA_n(\Ik)$, we have $\tau (\phi \lambda )^{-1} = \gamma \mu$, or $\phi ^{-1} = \lambda \tau ^{-1} \gamma \mu$ as required.
\end{proof}

With this classification in hand, we can now show that translation degenerate maps are either affine or \cotame.

\begin{theorem}\label{thm:tdcotame}
Let $\phi \in \GA_n(\Ik)$ be translation degenerate.  Then $\phi$ is either affine or \cotame.
\end{theorem}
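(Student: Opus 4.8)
The plan is to read off the result directly from the factorization theorem (Theorem~\ref{thm:tdfactor}) and Bodnarchuk's biparabolic result (Theorem~\ref{thm:biparabolic}), with only a small amount of bookkeeping to align the two parabolic directions. First I would record two invariance facts: co-tameness is preserved under taking inverses, since $\langle \phi, \Af_n(\Ik)\rangle = \langle \phi^{-1}, \Af_n(\Ik)\rangle$; and it is preserved under affine equivalence, by Definition~\ref{def:cotame}. Hence, writing $\phi^{-1} = \lambda \tau^{-1}\gamma\mu\rho$ as in Theorem~\ref{thm:tdfactor} and discarding the outer factors $\lambda \in \GL_n(\Ik)$ and $\rho \in \mathfrak{S}_n$ (both affine), it will suffice to prove that the core product $\tau^{-1}\gamma\mu$ is either affine or co-tame, since $\phi \simeq \phi^{-1} \simeq \tau^{-1}\gamma\mu$ in the relevant sense.

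The key observation I would then make is that \emph{each} of the two factors is parabolic, merely with respect to different variables. On one hand, $\tau \in \BA_n(\Ik)$ forces $\tau^{-1} \in \BA_n(\Ik)$, and every lower triangular automorphism lies in $\PA_n(\Ik)$ because its coordinates trivially satisfy the criterion of Lemma~\ref{lem:para}(iii). On the other hand, a one-line computation from the explicit forms of $\gamma$ and $\mu$ gives
$$\gamma\mu = (x_1 + G(x_2,\ldots,x_n),\, G_2(x_2,\ldots,x_n),\ldots,G_n(x_2,\ldots,x_n)),$$
so $\gamma\mu$ normalizes $\Tr_{n,1}(\Ik)$; that is, it is parabolic in $x_1$ rather than in $x_n$.

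To reconcile the two directions I would introduce $\pi = (x_1 \leftrightarrow x_n) \in \mathfrak{S}_n$ and set $\psi_2 = \pi(\gamma\mu)\pi$. Since conjugation by $\pi$ sends $\Tr_{n,1}(\Ik)$ to $\Tr_{n,n}(\Ik)$, it carries the normalizer of the former onto the normalizer of the latter, so $\psi_2 \in \PA_n(\Ik)$; concretely, one checks that the coordinates of $\psi_2$ are obtained from those of $\gamma\mu$ by the substitution $x_n \mapsto x_1$, putting $\psi_2$ into the shape of Lemma~\ref{lem:para}(iii). Using $\pi^2 = \id$ we have $\gamma\mu\,\pi = \pi\,\psi_2$, whence
$$\tau^{-1}\gamma\mu \simeq \tau^{-1}\gamma\mu\,\pi = \tau^{-1}\,\pi\,\psi_2.$$
This final expression is of the biparabolic form $\psi_1 \alpha \psi_2$ with $\psi_1 = \tau^{-1} \in \PA_n(\Ik)$, $\psi_2 \in \PA_n(\Ik)$, and $\alpha = \pi \in \Af_n(\Ik)$, so Theorem~\ref{thm:biparabolic} yields that it is affine or co-tame, and therefore so is $\phi$.

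Given the factorization theorem, there is essentially no remaining obstacle; the only points requiring care are keeping the two distinct parabolic directions straight and moving the transposition $\pi$ across the product correctly so that the outcome genuinely matches the hypotheses of Theorem~\ref{thm:biparabolic}. The various degenerate sub-cases (for instance $\tau$, $\gamma$, or $\mu$ affine) need no separate treatment, as they are already absorbed by the ``affine or co-tame'' conclusion of the biparabolic statement.
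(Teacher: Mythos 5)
Your reduction is mathematically sound as far as it goes: the factorization of Theorem \ref{thm:tdfactor} does exhibit $\phi^{-1}$, up to affine equivalence, as $\tau^{-1}\pi\psi_2$ with $\tau^{-1}\in\BA_n(\Ik)\subset\PA_n(\Ik)$, $\pi=(x_1\leftrightarrow x_n)$ affine, and $\psi_2=\pi(\gamma\mu)\pi\in\PA_n(\Ik)$, and your bookkeeping with the two parabolic directions is correct. The problem is that invoking Theorem \ref{thm:biparabolic} at this point makes the argument circular \emph{within this paper}: the proof given here for Theorem \ref{thm:biparabolic} begins by disposing of the translation degenerate case precisely by citing Theorem \ref{thm:tdcotame}, i.e., the statement you are trying to prove. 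The paper is explicit about this trap --- see the remark immediately following the statement of Theorem \ref{thm:tdcotame}, which records exactly your observation (every translation degenerate map is affinely equivalent to a biparabolic automorphism) and then explains that a direct proof is given instead, so that Theorem \ref{thm:tdcotame} can in turn be used for a new proof of Bodnarchuk's biparabolic theorem. Your argument would become legitimate only by appealing to Bodnarchuk's original 2005 proof of the biparabolic result as an external black box; you would need to say so explicitly, and it runs against the paper's stated aim of re-deriving that result.

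What your proposal is missing, therefore, is a self-contained replacement for the biparabolic input. The paper supplies one by working directly with the factorization: after reducing to $\phi^{-1}=\tau^{-1}\pi\psi$ with $\tau$ in TTD form and $\psi\in\PA_n(\Ik)$, it conjugates the affine map $\alpha=(x_1,\ldots,x_{n-1},x_n+x_{n-1})$ by $\phi$ to obtain a map of the form $\tau^{-1}\mu\tau$ with $\mu=(x_1+G,x_2,\ldots,x_{n-1},x_n+a)$ and $G\in\Ik[x_2,\ldots,x_n]$, and then proves from scratch that such maps are nonaffine (Lemma \ref{lem:nonaffine}) and \cotame (Lemma \ref{lem:3trispecial}); the latter is a genuinely nontrivial induction on $\deg_{x_n}G$ with several cases, relying only on the triangular and parabolic results of Section \ref{sec:known}. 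Some argument of this kind is the real content of the theorem and cannot be bypassed by citing Theorem \ref{thm:biparabolic} as proved in this paper.
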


\begin{remark}
The alert reader will note that Theorem \ref{thm:tdfactor} shows that every translation degenerate map is affinely equivalent to a biparabolic automorphism, and therefore is either affine or \cotame by a result of Bodnarchuk (Theorem \ref{thm:biparabolic}).  
However, by giving a direct proof of Theorem \ref{thm:tdcotame} here, 
we are able to use this in our new proof of Bodnarchuk's theorem in section \ref{sec:known}.
\end{remark}

\begin{proof}[Proof of Theorem~\ref{thm:tdcotame}]
By Theorem \ref{thm:tdfactor} and an affine equivalence, we may assume $\phi ^{-1} = \tau ^{-1} \pi \psi$ where $\tau$ is in TTD form, 
$\pi = (x_1\leftrightarrow x_n) \in \mathfrak{S}_n$, and $\psi \in \PA_n(\Ik)$.  Let $\alpha = (x_1,\ldots,x_{n-1},x_n+x_{n-1}) \in \Af_n(\Ik) \cap \BA_1(\Ik^{[n-1]})$.  Then
$$\phi ^{-1} \alpha \phi = \tau ^{-1} \pi \psi \alpha \psi ^{-1} \pi \tau =  \tau ^{-1} \pi \tilde{\alpha} \pi \tau$$
where $\tilde{\alpha}=\psi \alpha \psi ^{-1} = (x_1,\ldots,x_{n-1},x_n+(x_{n-1})\psi ^{-1}) \in \BA_1(\Ik^{[n-1]})$.  
Since showing that $\phi ^{-1} \alpha \phi$ is \cotame suffices to show $\phi$ is \cotame, we are thus reduced to showing that maps of the form $\tau ^{-1} \pi \tilde{\alpha} \pi \tau$ are nonaffine (Lemma \ref{lem:nonaffine}) and thus \cotame (Lemma \ref{lem:3trispecial}).
\end{proof}

\begin{lemma}\label{lem:nonaffine}
Let $\tau\in\BA_n(\Ik)$ be a nonaffine triangular automorphism in TTD form and 
let $\mu=(x_1+G,x_2,\ldots,x_{n-1},x_{n}+a)$ 
for some $G\in\Ik[x_2,\ldots,x_{n}]\pri\K$ and $a\in\K$.  
Then $\tau ^{-1}\mu\tau$ is nonaffine.
\end{lemma}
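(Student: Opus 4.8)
The plan is to prove that $\sigma:=\tau^{-1}\mu\tau$ has a component of degree at least two. I would start with the cheapest candidate. Since $\tau$ fixes $x_1$, one computes directly that $(x_1)\sigma=x_1+(G)\tau$; so if $(G)\tau$ is nonaffine we are done immediately.

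The first obstacle is that $(G)\tau$ may be affine. To handle this I would use the shape of the TTD form (Definition~\ref{def:ttd}): every correction term of $(x_i)\tau$ carries a factor $x_1^r$ with $r\ge1$, so each $(x_i)\tau-x_i$ is divisible by $x_1$, and hence $(G)\tau\equiv G\pmod{x_1}$. Setting $x_1=0$ shows $G=(G)\tau|_{x_1=0}$, so if $(G)\tau$ is affine then $G$ is affine; being nonconstant, $G=c_0+\sum_{i\in S}c_ix_i$ for some nonempty $S\subseteq\{2,\ldots,n\}$. Writing $g:=(G)\tau$ and using that the linear part of $(x_i)\tau$ is $x_i-b_ix_1$, the linear part of $g$ is $g_{\mathrm{lin}}=\sum_{i\in S}c_i(x_i-b_ix_1)$, whose $x_2,\ldots,x_n$-component $\sum_{i\in S}c_ix_i$ is nonzero.

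With the first component now useless, I would pass to the smallest index $m$ with $d_m=1$, which exists because $\tau$ is nonaffine. For this $m$ the lower components of $\tau$ are the affine maps $x_j-b_jx_1$ $(j<m)$, and $(x_m)\tau$ collapses to the explicit quadratic $x_m-x_1x_{m-1}-b_mx_1+\tfrac{b_{m-1}}2x_1^2$. Inverting yields an equally explicit $H_m:=(x_m)\tau^{-1}$, and substituting into $(x_m)\sigma=H_m(x_1+g,(x_2)\tau,\ldots,(x_{n-1})\tau,(x_n)\tau+a)$ produces, after the built-in cancellations, $(x_m)\sigma=x_m+gx_{m-1}+b_mg+\tfrac{b_{m-1}}2g^2$ (with $b_1=1$, so this also covers $m=2$; plus a harmless constant $a$ when $m=n$). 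Since $g$ is affine, this has degree at most two, with quadratic part $g_{\mathrm{lin}}\bigl(x_{m-1}+\tfrac{b_{m-1}}2g_{\mathrm{lin}}\bigr)$.

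The crux is to rule out that this quadratic part vanishes as well. As $g_{\mathrm{lin}}\ne0$, vanishing forces the linear form $x_{m-1}+\tfrac{b_{m-1}}2g_{\mathrm{lin}}$ to be zero. When $m=2$ this says $g_{\mathrm{lin}}=-2x_1$, contradicting that $g_{\mathrm{lin}}$ has a nonzero $x_2,\ldots,x_n$-part. When $m\ge3$ I would match coefficients: the terms in $x_i$ $(i\in S)$ force $S=\{m-1\}$ and $\tfrac{b_{m-1}}2c_{m-1}=-1$, while the term in $x_1$ forces $\sum_{i\in S}c_ib_i=0$; but the first condition gives $\sum_{i\in S}c_ib_i=c_{m-1}b_{m-1}=-2\ne0$, a contradiction. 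Thus $\sigma$ is nonaffine. The only place the full TTD structure is genuinely needed is the explicit evaluation of $(x_m)\sigma$ at the minimal nonaffine index; everything else is bookkeeping, and the main obstacle is precisely that the naive first-component witness can cancel, forcing this deeper computation.
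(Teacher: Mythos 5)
Your proposal is correct and follows essentially the same route as the paper's proof: force $(G)\tau$ to be affine via the first component, then evaluate the component at the first nonaffine index of $\tau$ (your minimal $m$ with $d_m=1$ is exactly the paper's case split into $d_2=1$ versus $d_r=0,\ d_{r+1}=1$), obtaining the same expression $x_m+\tilde{G}x_{m-1}+b_m\tilde{G}+\tfrac{b_{m-1}}{2}\tilde{G}^2$ and the same contradiction with $G\in\Ik[x_2,\ldots,x_n]$. The only differences are cosmetic: you unify the two cases via $b_1=1$ and conclude by matching coefficients of $G$ directly, whereas the paper solves for $\tilde{G}$ and pulls back through $\tau^{-1}$.
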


\begin{proof}
Since $\tau\theta_{n,a}=\theta_{n,a}\tau$, we can assume $a=0$.
For contradiction, we suppose $\tau^{-1}\mu \tau$ is affine.  
Set $\tilde{G}=(G)\tau$, and note that $\tilde{G}$ must be linear as $(x_1) \tau ^{-1} \mu \tau = x_1+\tilde{G}$. 

We distinguish two cases: first suppose that $d_2=1$, in which case $(x_2) \tau= x_2-b _2 x_1 - \frac{1}{2} x_1^2$ (we remind the reader the parameters $b_i$ and $d_i$ are from Definition \ref{def:ttd}).  
Then
$$(x_2) \tau ^{-1} \mu \tau = x_2+b _2 \tilde{G}+x_1\tilde{G}+\frac{1}{2} \tilde{G}^2.$$
Since we assumed $\tau ^{-1} \mu \tau$ is affine, we must have $\tilde{G}=-2x_1+c$ for some $c \in \Ik$, and thus $G=(\tilde{G})\tau ^{-1} = -2x_1+c$, 
contradicting $G \in \Ik[x_2,\ldots,x_{n}]$.

Now, we suppose instead that $d_2=0$. 
Since $\tau$ is nonaffine, we must have $d_r=0$ and $d_{r+1}=1$ for some $2\le r\le n-1$.  
Then we have
\begin{align*}
(x_r) \tau &= x_r-b _r x_1 \\
(x_{r+1})\tau &= x_{r+1}-x_1(x_{r}+b _{r+1})+ \frac{b _r}{2}x_1^2.
\end{align*}
Then 
\begin{align*}(x_{r+1}) \tau ^{-1} \mu \tau &= \left(x_{r+1}+(x_1+G)(x_r+b _{r+1})+ \frac{b _r}{2}(x_1+G)^2\right)\tau \\
&= x_{r+1}+\tilde{G}(x_r-b _r x_1+b _{r+1})+b _r x_1 \tilde{G} + \frac{b _r}{2} \tilde{G}^2.
\end{align*}
Thus we must have $b _r \neq 0$ and $\tilde{G}= -\frac{2}{b _r} x_r+c$ for some $c \in \Ik$, and thus $G=(\tilde{G})\tau ^{-1} = -\frac{2}{b _r}x_r-2x_1+c$. Again, this contradicts $G \in \Ik[x_2,\ldots,x_n]$, completing the proof.
\end{proof}

\begin{lemma}\label{lem:3trispecial}
Let $\tau\in\BA_n(\Ik)$ be triangular automorphism in TTD form and let $\mu=(x_1+G,x_2,\ldots,x_{n-1},x_{n}+a)$ 
for some $G\in\Ik[x_2,\ldots,x_{n}]$ and $a\in\K$.  
Then $\phi=\tau^{-1}\mu\tau$ is either affine or \cotame.
\end{lemma}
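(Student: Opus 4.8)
The plan is to prove the statement by induction on $\deg_{x_n}G$, with the base cases reducing to the already-established triangular and parabolic cases. First I would normalize the data. Since $\tau$ is in TTD form it is lower triangular with all diagonal entries equal to $1$; in particular $(x_i)\tau$ is free of $x_n$ for $i<n$ while $(x_n)\tau=x_n+P_n(x_1,\dots,x_{n-1})$, so $\tau$ commutes with every last-coordinate translation $\theta_{n,c}$. Writing $\mu=\theta_{n,a}\,\mu_0$ with $\mu_0=(x_1+G,x_2,\dots,x_n)$ and using this commutation, $\phi=\tau^{-1}\theta_{n,a}\mu_0\tau=\theta_{n,a}\,\tau^{-1}\mu_0\tau$, so $\phi\simeq\tau^{-1}\mu_0\tau$ and we may assume $a=0$. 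I would also record that every TTD automorphism lies in $\PA_n(\Ik)$ (by Lemma~\ref{lem:para}(iii), since $(x_i)\tau\in\K[x_1,\dots,x_{i-1}]$ for $i\le n-1$ and $(x_n)\tau=x_n+P_n$).

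Now the two base cases. If $\tau$ is affine (all $d_i=0$), then $\phi\simeq\mu$, and reversing the order of the variables by a permutation $\rho\in\SG_n$ turns $\mu$ into a lower triangular automorphism (its first coordinate becomes $x_1+a$ and its last becomes $x_n$ plus a polynomial in $x_1,\dots,x_{n-1}$); hence $\mu$, and so $\phi$, is affine or \cotame by Theorem~\ref{thm:triangular}. If $\deg_{x_n}G=0$, i.e. $G\in\Ik[x_2,\dots,x_{n-1}]$, then $\mu\in\PA_n(\Ik)$, and since $\PA_n(\Ik)$ is a group and $\tau\in\PA_n(\Ik)$, the whole map $\phi=\tau^{-1}\mu\tau$ is parabolic, so Theorem~\ref{thm:parabolic} finishes this case.

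For the inductive step ($\deg_{x_n}G\ge1$) I would conjugate by a last-coordinate translation. Using $\tau\theta_{n,c}=\theta_{n,c}\tau$ one computes $\phi^{-1}\theta_{n,c}\phi=\tau^{-1}\tilde\mu\,\tau$, where $\tilde\mu=(x_1+\tilde G,x_2,\dots,x_{n-1},x_n+c)$ and $\tilde G=G-(G)\theta_{n,c}$ satisfies $\deg_{x_n}\tilde G=\deg_{x_n}G-1$ by Lemma~\ref{lem:partial}. As $\theta_{n,c}\in\Af_n(\Ik)$, proving $\tau^{-1}\tilde\mu\tau$ is \cotame proves $\phi$ is. Provided some $c$ makes $\tilde G$ nonconstant, Lemma~\ref{lem:nonaffine} guarantees $\tau^{-1}\tilde\mu\tau$ is nonaffine, and the induction hypothesis (applied after renormalizing $a=0$ on $\tilde\mu$, whose $\deg_{x_n}$ has strictly dropped) gives that it is \cotame.

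The one place this breaks down — and what I expect to be the main obstacle — is the stuck case $\deg_{x_n}G=1$ with a constant leading coefficient, say $G=\lambda x_n+h$ with $\lambda\in\Ik^*$ and $h\in\Ik[x_2,\dots,x_{n-1}]$: then $\tilde G=-\lambda c$ is constant for every $c$, $\tilde\mu$ is a translation, and $\tau^{-1}\tilde\mu\tau$, though triangular, is affine for all $c$ (a TTD map is translation degenerate in $x_1$, so $\tau^{-1}\theta_{1,c}\tau\in\Af_n(\Ik)$). Equivalently $\phi$ is itself translation degenerate in $x_n$, so no last-coordinate translation helps. My proposed way around this is to attack $h$ instead of $x_n$: for an index $2\le j\le n-1$ for which $\tau$ commutes with $\theta_{j,c}$ (which happens precisely when $d_{j+1}=0$, so that $x_j$ does not propagate into the higher components of $\tau$), conjugation by $\theta_{j,c}$ cancels the $\lambda x_n$ term and yields $\phi^{-1}\theta_{j,c}\phi\simeq\tau^{-1}\hat\mu\tau$ with $\hat\mu=(x_1+\hat h,x_2,\dots,x_n)$ and $\hat h=h-(h)\theta_{j,c}\in\Ik[x_2,\dots,x_{n-1}]$ already in the base-case form $\deg_{x_n}\hat h=0$; whenever $\hat h$ can be made nonconstant this closes the case via the parabolic base case and Lemma~\ref{lem:nonaffine}. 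The genuinely hard residue is when $G$ is linear (so $\mu$ is affine) and $\tau$ is ``deep'' (no commuting $\theta_j$ produces a nonconstant $\hat h$): here $\phi$ is a conjugate of a single linear transvection by a deep TTD map, is translation degenerate in every coordinate direction, and is \emph{not} affinely equivalent to a triangular map, so it must be handled by the finer translation-degenerate analysis. For this residue I would attempt a secondary induction on the depth of $\tau$ (the number of nonzero $d_i$ from Definition~\ref{def:ttd}), whose shallow base case produces a $\phi$ that becomes triangular after reordering the variables, carried out by a direct computation in the spirit of Lemma~\ref{lem:nonaffine}.
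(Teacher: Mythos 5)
Your setup---normalizing $a=0$ using $\tau\theta_{n,a}=\theta_{n,a}\tau$, the two base cases ($\tau$ affine, $G\in\Ik[x_2,\ldots,x_{n-1}]$), the downward induction on $\deg_{x_n}G$ via conjugation by $\theta_{n,c}$, and the appeal to Lemma~\ref{lem:nonaffine} to certify nonaffineness---matches the paper's proof up through its Case 1 (where $G=Px_n+Q$ with $P\notin\Ik$, so $\tilde G=-\Delta_n(G)=-P$ is nonconstant). The gap is in what you call the stuck case, $G=cx_n+Q$ with $c\in\Ik^*$ and $Q\in\Ik[x_2,\ldots,x_{n-1}]$, which is the heart of the lemma and occupies most of the paper's argument. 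Your workaround---conjugating by a translation $\theta_{j,c}$ in a coordinate $x_j$ with $d_{j+1}=0$---only succeeds when $\Delta_j(Q)$ is nonconstant for some such $j$, and you have mischaracterized the residue: it is not merely ``$G$ linear.'' When $d_2=\cdots=d_n=1$ there is no index $j$ with $d_{j+1}=0$ at all, so the residue contains every $G=cx_n+Q$ with $Q\in\Ik[x_2,\ldots,x_{n-1}]$ arbitrary; and even when some $d_{j+1}=0$, the residue contains all $Q$ that fail to depend suitably on those particular free variables. Your plan for this residue (``a secondary induction on the depth of $\tau$ \ldots carried out by a direct computation in the spirit of Lemma~\ref{lem:nonaffine}'') is not carried out, and no conjugation by translations can resolve it, precisely because---as you yourself observe---$\phi$ is then translation degenerate in every useful direction.

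The paper's resolution of Case 2 requires a genuinely different device that your proposal does not supply: conjugation by affine triangular maps $\lambda\in\Af_n(\K)\cap\BA_n(\K)$ that are \emph{not} translations but are chosen so that $\tilde{\lambda}=\tau\lambda\tau^{-1}$ is still affine; one then shows $\tilde{\mu}=\mu^{-1}\tilde{\lambda}\mu\in\PA_n(\K)$, hence $\phi^{-1}\lambda\phi=\tau^{-1}\tilde{\mu}\tau\in\PA_n(\K)$, and verifies nonaffineness by hand. Concretely, if some $d_i=0$ ($2\le i\le n-1$) one takes $\lambda=(x_1,\ldots,x_{n-1},x_n+x_i-b_ix_1)$; if $d_2=\cdots=d_{n-1}=1$ and $d_n=0$ one takes $\lambda=(x_1,\ldots,x_{n-1},2x_n-(\frac{1}{c}+b_n)x_1)$; and in the deepest case $d_2=\cdots=d_n=1$ the required $\lambda$ is produced by the long computation of Lemma~\ref{lem:lambda}. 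Without these maps (or a substitute for them), the proof is incomplete exactly where the lemma is hardest.
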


\begin{proof} Since $\tau\theta_{n,a}=\theta_{n,a}\tau$, we can assume $a=0$.
We first observe that if $\tau$ is affine then $\phi\simeq\mu\simeq\pi\mu\pi\in\PA_n(\Ik)$ 
where $\pi=(x_1\leftrightarrow x_n)$; and similarly, if $G \in \Ik[x_2,\ldots,x_{n-1}]$ then $\mu\in\PA_n(\Ik)$ 
and thus $\phi\in\PA_n(\Ik)$. In both cases, $\phi$ is affine or \cotame by Theorem~\ref{thm:parabolic}.

So we now assume that $\tau$ is nonaffine and $\deg_{x_n}G\ge 1$. 
We prove that $\phi$ is \cotame by induction on $\deg_{x_n}G$.
If $\deg_{x_n}G\ge 2$ then we set $\theta=\theta_{n,1}$ and compute
$$\phi^{-1}\theta\,\phi=\tau^{-1}\mu^{-1}\tau\theta\tau^{-1}\mu\tau
=\tau^{-1}\mu^{-1}\theta\mu\tau=\tau^{-1}\tilde{\mu}\tau$$
where $\tilde{\mu}=\mu^{-1}\theta\mu=(x_1+\tilde{G},x_2,\ldots,x_{n-1},x_n+1)$
and $\tilde{G}=-\Delta_n(G)\in\Ik[x_2,\ldots,x_{n}]$, 
so $\deg_{x_n}\tilde{G}=\deg_{x_n}G-1$ by Lemma~\ref{lem:partial}.  
Since $\phi^{-1}\theta_{n,1}\phi$ is \cotame implies $\phi$ is \cotame,
 we may assume we are in one of the subsequent cases with $\deg_{x_n}G=1$.

{\noindent \bf Case 1:} Suppose $G=Px_n+Q$ for some $P,Q\in\Ik[x_2,\ldots,x_{n-1}]$ with $P\notin\Ik$. 
As previously, (with again $\theta=\theta_{n,1}$) we have $\phi^{-1}\theta\phi=\tau^{-1}\tilde{\mu}\tau$
where $\tilde{\mu}=(x_1+\tilde{G},x_2,\ldots,x_{n-1},x_n+1)$, 
where $\tilde{G}=-\Delta_n(G)=-P\in\Ik[x_2,\ldots,x_{n-1}]\pri\K$.
Hence $\tilde{\mu}\in\PA_n(\K)$ and thus $\phi^{-1}\theta\phi=\tau^{-1}\tilde{\mu}\tau\in\PA_n(\K)$.
Since $\tau$ is nonaffine and $-P\not\in\K$, Lemma \ref{lem:nonaffine} implies that 
$\phi^{-1}\theta\phi=\tau^{-1}\tilde{\mu}\tau$ is nonaffine. Therefore $\phi^{-1}\theta\phi$
and thus $\phi$ are \cotame by Theorem~\ref{thm:parabolic}. 

{\noindent \bf Case 2:} Suppose $G=c x_n +Q$ for some $Q\in\Ik[x_2,\ldots,x_{n-1}]$ and $c\in\Ik^*$. 
In the following three subcases we consider a particular $\lambda\in\Af_n(\K)\cap\BA_n(\K)$
and set $\tilde{\lambda}:=\tau\lambda\tau^{-1}\in\Af_n(\K)$. 
We prove that $\tilde{\mu}:=\mu^{-1}\tilde{\lambda}\mu\in\PA_n(\K)$ and 
thus $\phi^{-1}\lambda\phi=\tau^{-1}\tilde{\mu}\tau\in\PA_n(\K)$.
We also prove that $\phi^{-1}\lambda\phi\not\in\Af_n(\K)$. Using Theorem~\ref{thm:parabolic}, we deduce that 
$\phi^{-1}\lambda\phi$ and thus $\phi$ is \cotame. 
In the first two subcases, the choice of $\lambda$ is easy; however, for the last case we require Lemma~\ref{lem:lambda} stated and proved below.

{\noindent \bf Case 2.1:} Suppose $d_i=0$ (hence $(x_i)\tau=x_i-b_ix_1$) for some $2\le i\le n-1$. 
We consider $\lambda = (x_1,\ldots,x_{n-1},x_n+x_i-b_ix_1)\in\Af_n(\K)\cap\BA_n(\K)$. 
Then $\tilde{\lambda}=(x_1,\ldots,x_{n-1},x_n+x_i)$ and 
$\tilde{\mu}=(x_1-cx_i,x_2,\ldots,x_n+x_i)\in\PA_n(\K)$.  
To check that $\phi ^{-1} \lambda \phi$ is nonaffine, note that 
$\phi^{-1}\lambda\phi\lambda ^{-1}=\tau^{-1}(x_1-cx_i,x_2,\ldots,x_n)\tau$ which is nonaffine by Lemma~\ref{lem:nonaffine}.

{\noindent \bf Case 2.2:} Suppose $d_{2,n-1}=1$ (equivalently $d_2=\cdots=d_{n-1}=1$) and $d_{n}=0$
(hence $(x_n)\tau=x_n-b_nx_1$). 
We consider $\lambda = (x_1,\ldots,x_{n-1},2x_n-(\frac{1}{c}+b_n)x_1)$. 
Then $\tilde{\lambda} = (x_1,\ldots,x_{n-1},2x_n-\frac{1}{c}x_1)$, 
and $\tilde{\mu} = (2x_1+Q,x_2,\ldots,x_{n-1},x_n-\frac{1}{c}x_1-\frac{1}{c}Q)$.
Since $Q\in\Ik[x_2,\ldots,x_{n-1}]$, we have $\tilde{\mu}\in\PA_n(\K)$.
Since $d_2=1$, to check that $\phi ^{-1} \lambda \phi$ is nonaffine, 
we proceed as in the proof of Lemma~\ref{lem:nonaffine}.
By contradiction, we suppose $\tau^{-1}\tilde{\mu}\tau$ is affine.  
We set $\tilde{Q}=(Q)\tau$ and note that $\tilde{Q}$ must be linear as 
$(x_1) \tau ^{-1} \tilde{\mu} \tau = 2x_1+\tilde{Q}$. 
We have $(x_2) \tau= x_2-b _2 x_1 - \frac{1}{2} x_1^2$.  
Then
$$(x_2)\tau^{-1}\tilde{\mu}\tau=x_2+b_2x_1+\frac{3}{2}x_1^2+(2x_1+\frac{b_2}{2}+\tilde{Q})\tilde{Q}$$
is affine. Thus $\tilde{Q}=-2x_1+c$ for some $c \in \Ik$, and thus $Q=(\tilde{Q})\tau ^{-1} = -2x_1+c$, 
contradicting $Q\in \Ik[x_2,\ldots,x_{n-1}]$.

{\noindent \bf Case 2.3:} Suppose $d_{2,n}=1$ (equivalently $d_2=\cdots=d_n=1$).  
Let $a \in \Ik^*$ be any non-root of unity, and set $g=c^{-1}(a-a^n) \neq 0$.  Let $\lambda\in\Af_n(\K)\cap\BA_n(\K)$ be as in Lemma~\ref{lem:lambda}, and set $\tilde{\lambda} = \tau \lambda \tau ^{-1}$.  
We compute
\begin{align*}
(x_1)\tilde{\mu} &:= (x_1)\mu^{-1}\tilde{\lambda}\mu=\left(x_1-cx_n-Q \right) \tilde{\lambda}  \mu \\
&=\left(ax_1 -c\left(a^nx_n+gx_1+(a^n-a^{n-1}) \sum _{r=2} ^{n-1} w _{k-r} x_r \right)-(Q)\tilde{\lambda} \right) \mu \\
&=\left(a^nx_1 -c\left(a^nx_n+(a^n-a^{n-1}) \sum _{r=2} ^{n-1} w _{k-r} x_r \right)-(Q)\tilde{\lambda} \right) \mu \\
&= a^n(x_1+cx_n+Q) -c\left(a^nx_n+(a^n-a^{n-1}) \sum _{r=2} ^{n-1} w _{k-r} x_r \right)-(Q)\tilde{\lambda} \\
&= a^nx_1+R,
\end{align*}
where $R=a^nQ-c(a^n-a^{n-1}) \sum _{r=2} ^{n-1} w _{k-r} x_r -(Q)\tilde{\lambda}\in \K[x_2,\ldots,x_{n-1}].$

Note that, for each $2 \leq k \leq n-1$, we have $(x_k)\tilde{\lambda} \in \K[x_2,\ldots,x_k]$, 
so we have $(x_k) \tilde{\mu}=(x_k)\tilde{\lambda}$. We deduce $\tilde{\mu} \in \PA_n(\K)$ and thus
$\phi ^{-1} \lambda \sigma \phi = \tau ^{-1} \tilde{\mu} \tau \in\PA_n(\K).$
So we are left to check that $\tau ^{-1} \tilde{\mu} \tau$ is not affine.

First, suppose $R \notin \K$, in which case $(R)\tau$ must have degree at least $2$. 
Since $(x_1) \tau ^{-1} \tilde{\mu} \tau = ax_1+(R)\tau$ we deduce $\tau ^{-1} \tilde{\mu} \tau$ is not affine, as required.

We may now assume that $R \in \K$. 
We compute
\begin{align*}
(x_2) \tau ^{-1} \tilde{\mu}\tau &= \left(x_2+\frac{1}{2}x_1^2+b_2x_1\right) \tilde{\mu} \tau \\
&= \left( a^2x_2+(a^2-1)b_3 + \frac{1}{2}(a^nx_1+R)^2+b_2(a^nx_1+R) \right) \tau \\
&=a^2 x_2 + \frac{1}{2}(a^{2n}-a^2)x_1^2+\left( b_2(a^n-a^2)+a^nR\right)x_1+\tilde{R}
\end{align*}
for some $\tilde{R} \in \K$.  Since $a$ is not a root of unity, we see $\tau ^{-1} \tilde{\mu} \tau$ is not affine, as required.

\end{proof}

\begin{lemma}\label{lem:lambda}
Let $\tau \in \BA_n(\K)$ be a triangular automorphism in TTD form with $d_{2,n}=1$, and let $g \in \K$ and $a \in \K^*$.  
There exists $\lambda \in \Af_n(\K) \cap \BA_n(\K)$ and $w_1,\ldots,w_{n-2}\in \K$ 
such that, setting  $\tilde{\lambda}=\tau\lambda\tau^{-1}$,
\begin{align*}
(x_k) \tilde{\lambda} &= \begin{cases}
ax_1 & k=1 \\
a^kx_k + (a^k-1) b _{k+1} +(a^k-a^{k-1}) \sum _{r=2} ^{k-1} w _{k-r}x_r & 2 \leq k \leq n-1 \\
a^n x_n+ gx_1 + (a^n-a^{n-1}) \sum _{r=2} ^{n-1} w_{n-r}x_r & k=n
\end{cases}.
\end{align*}
Moreover, the $w_j$ satisfy the recursive definition
\begin{equation}\label{eq:w}
w _j = \begin{cases} 
-b _2                                & {\rm if\ } j=1 \\ 
-\sum _{i=1} ^{j-1} w _i b _{j-i+1} & {\rm if\ } 2\le j\le n 
\end{cases}.
\end{equation} 
\end{lemma}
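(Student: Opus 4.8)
The plan is to make every object explicit and reduce the lemma to a single affineness check. Since $\tau$ is in TTD form with $d_{2,n}=1$, the second remark after Definition~\ref{def:ttd} gives $\tau = \nu\exp(-x_1D)$, where $D\in\LND_{\Ik[x_1]}\Ik^{[n]}$ satisfies $D(x_1)=0$ and $D(x_k)=x_{k-1}+b_k$ for $2\le k\le n$, and $(x_k)\nu = x_k + \tfrac{(-1)^k}{k!}x_1^k$. The preliminary fact I would establish is that, writing $\tau^{-1}=(H_1,\ldots,H_n)$, one has $H_1=x_1$ and the differential relation $\tfrac{\partial H_k}{\partial x_1}=H_{k-1}+b_k$ for $2\le k\le n$; this is obtained by inverting the defining relations of TTD form (it is exactly the relation satisfied by the $H_i$ in the proof of Theorem~\ref{thm:ttd} when all $d_i=1$). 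Integrating this relation in $x_1$ shows by induction that each $H_k$ lies in $x_k+\Ik[x_1,\ldots,x_{k-1}]$ and, more precisely, is a sum of monomials each of which is a power of $x_1$ times at most one further variable.

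I would then convert the existence statement into a verification. Define $w_1,\ldots,w_{n-2}$ by the recursion~\eqref{eq:w}, let $\tilde\lambda$ be the affine map given by the displayed formulas, and set $\lambda:=\tau^{-1}\tilde\lambda\tau$; then $\tau\lambda\tau^{-1}=\tilde\lambda$ has the required form by construction, so everything reduces to showing $\lambda\in\Af_n(\Ik)\cap\BA_n(\Ik)$. Lower-triangularity is immediate, since $\tilde\lambda$ is lower triangular (each $(x_r)\tilde\lambda\in\Ik[x_1,\ldots,x_r]$) and $\tau,\tau^{-1}\in\BA_n(\Ik)$. The entire content is therefore that $\lambda$ is \emph{affine}: that each $(x_k)\lambda=((H_k)\tilde\lambda)\tau$ has degree at most one.

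To prove this I would compute $(x_k)\lambda$ directly and show that every nonlinear term cancels. The crucial structural observation is that the class of polynomials whose monomials are a power of $x_1$ times at most one further variable is closed under substituting $\tilde\lambda$ and then $\tau$; thus the only nonlinear monomials that can appear in $(x_k)\lambda$ are of the form $x_1^{\,i}x_r$ ($i\ge1$, $r\ge2$) and pure powers $x_1^{\,m}$ ($m\ge2$). With respect to the grading $\deg x_j=j$, the top-weight (i.e. $b=0$) part of $\tau$ commutes with the diagonal $(ax_1,\ldots,a^nx_n)$ forming the leading part of $\tilde\lambda$, and this forces the top-weight part of $(x_k)\lambda$ to reduce to $a^kx_k$, cancelling the leading nonlinear terms exactly as in the homogeneous case. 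What remains are the nonlinear terms generated by the constants $b_k$, and this is precisely where~\eqref{eq:w} is used: the convolution $w_j=-\sum_{i=1}^{j-1}w_ib_{j-i+1}$ is the solvability condition that annihilates the resulting cascade of $b$-corrections. The cases $2\le k\le n-1$ (where a constant $(a^k-1)b_{k+1}$ appears and there is no $x_1$ term) and $k=n$ (where the $gx_1$ term appears and there is no constant) are handled separately, the latter being where $g$ and the top $w$-coefficients enter.

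I expect the main obstacle to be exactly this final cancellation: keeping track of the $b$-generated nonlinear terms and checking that the recursively-defined $w_j$ kill them for all $k$ at once. The cleanest way to control it should be to prove the stronger inductive claim that $(x_k)\lambda$ equals its own linear truncation, differentiating in $x_1$ and feeding the relation $\tfrac{\partial H_k}{\partial x_1}=H_{k-1}+b_k$ into the inductive hypothesis; this turns the simultaneous cancellation into a one-step recursion in $k$ which the definition of the $w_j$ is precisely designed to close.
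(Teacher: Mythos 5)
Your reduction is sound and genuinely different from the paper's argument. The paper proceeds ``forward'': it writes down $\lambda=\lambda_0\lambda_1\lambda_2$ explicitly (a weighted-diagonal part, the $w$-part, and the $gx_1$-part) and computes $\tau\lambda_i\tau^{-1}$ for each factor by direct series manipulation. You instead set $\lambda:=\tau^{-1}\tilde{\lambda}\tau$ and aim to prove affineness via the differential relation $\partial H_k/\partial x_1=H_{k-1}+b_k$ for $\tau^{-1}=(H_1,\ldots,H_n)$. That relation is correct (it is the $d_{2,n}=1$ case of the structure exploited in Theorem~\ref{thm:ttd}, though note the paper explicitly declines to prove that TTD maps have this property, so you do owe a verification), and the chain rule gives the clean identity $\frac{\partial}{\partial x_1}\bigl[(H_k)\tilde{\lambda}\bigr]=a\,(H_{k-1})\tilde{\lambda}+ab_k+g\delta_{kn}$, which does set up an induction on $k$. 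This is arguably more conceptual than the paper's computation.

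However, there is a real gap: the entire content of the lemma is the final cancellation, and you assert rather than prove it (``this is precisely where \eqref{eq:w} is used,'' ``which the definition of the $w_j$ is precisely designed to close''). Concretely, your induction cannot run on the bare statement ``$(x_{k-1})\lambda$ is affine'': after integrating, you get $(H_k)\tilde{\lambda}=N_0+A$ where $N_0=a\sum_j c_{k-1,j}(H_{j+1}-b_{j+1}H_1)+a(c_{k-1,0}+b_k)H_1$ is built from the \emph{explicit} coefficients $c_{k-1,j}$ of $(x_{k-1})\lambda$, and $A\in\K[x_2,\ldots,x_n]$ is affine. Since the only affine combinations of the $H_j$ lying in $\K[x_2,\ldots,x_n]$ are the constants, you must show $A\in\K$, i.e.\ that $(x_k)\tilde{\lambda}\big|_{x_1=0}$ and $N_0\big|_{x_1=0}=a\sum_{j\ge1}c_{k-1,j}x_{j+1}$ agree in every $x_r$-coefficient. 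This forces you to carry the explicit formula for $(x_k)\lambda$ through the induction and to verify the coefficient identities $a\,c_{k-1,j}=(a^k-a^{k-1})w_{k-1-j}$ (with the index shift $r\mapsto r+1$ coming from the antiderivative $H_{j+1}-b_{j+1}H_1$ of $H_j$); the recursion \eqref{eq:w} enters exactly in reconciling the constant and $x_1$-terms across steps. None of this is done. The strategy does work --- I checked it closes for small $n$ --- but as written the proposal is a plan for the computation rather than the computation, and for this lemma the computation is the proof.
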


\begin{proof}
Let $\lambda_1 \in \GL_n(\K) \cap \BA_n(\K)$ be given by 
$$(x_k) \lambda_1 = x_k+\frac{a-1}{a} \sum _{r=1} ^{k-1} w_{k-r}x_r$$
where $w_j$ are defined as in \eqref{eq:w}.  The proof is a straightforward but moderately unpleasant computation; for each $2 \leq k \leq n$, we have

\begin{align*}
(x_k) \tau \lambda_1 \tau ^{-1} 
&= \left( x_k+\sum _{i=1} ^{k-1} \frac{ (-1)^i x_1 ^i }{i!}(x_{k-i}+b_{k-i+1}) 
+ \frac{(-1)^k x_1 ^k}{k!} \right) \lambda_1 \tau ^{-1}
\end{align*}
(by definition of $\tau$)
\begin{align*}
&= \left( x_k+\frac{a-1}{a} \sum _{r=1} ^{k-1} w_{k-r}x_r + \sum _{i=1} ^{k-1} \frac{ (-1)^i x_1 ^i }{i!}\left(x_{k-i} + \frac{a-1}{a} \sum _{s=1} ^{k-i-1} w_{k-i-s}x_s+b_{k-i+1}\right) \right.\\
&\phantom{xxx} \left.  + \frac{(-1)^k x_1 ^k}{k!} \right) \tau ^{-1}
\intertext{(by definition of $\lambda_1$)}
&= \left( x_k + \sum _{i=1} ^{k-1} \frac{ (-1)^i x_1 ^i }{i!}(x_{k-i}+b_{k-i+1}) + \frac{(-1)^k x_1 ^k}{k!}\right. \\
&\phantom{xxx} \left. +\frac{a-1}{a} \sum _{r=1} ^{k-1} w_{k-r}x_r+\sum _{i=1} ^{k-2} \frac{(-1)^i x_1^i}{i!} \left( \frac{a-1}{a} \sum _{s=1} ^{k-i-1} w_{k-i-s}x_s\right) \right) \tau ^{-1}
\intertext{(because the sum $\sum _{s=1} ^{k-i-1}$ is equal to $0$ when $i=k-1$)}
&= x_k + \frac{a-1}{a}\left( \sum _{r=1} ^{k-1} w_{k-r} x_r + \sum _{i=1} ^{k-2} \frac{(-1)^i}{i!} x_1^i \sum _{s=1} ^{k-i-1} w_{k-i-s} x_s \right) \tau ^{-1} 
\intertext{(by definition of $\tau$)}
&= x_k + \frac{a-1}{a}\left( \sum _{r=1} ^{k-1} w_{k-r} x_r + \sum _{r=2} ^{k-1} w_{k-r}\sum _{i=1} ^{r-1} \frac{(-1)^i}{i!} x_1^i  x_{r-i} \right) \tau ^{-1} 
\intertext{(in the sum $\sum _{s=1} ^{k-i-1}$ we changed the variable $s$ to $r=s+i$ and then permuted 
the two sums: $\sum_{i=1}^{k-2}\sum_{r=i+1}^{k-1}=\sum_{r=2}^{k-1}\sum_{i=1}^{r-1}$)}
&= x_k+\frac{a-1}{a} \left( w_{k-1}x_1+\sum _{r=2} ^{k-1} w_{k-r}\left(x_r- \sum _{i=1} ^{r-1} \frac{(-1)^i b_{r-i+1}}{i!} x_1^i -\frac{(-1)^r}{r!}x_1^r\right)\right)
\intertext{(we separed the first term of the first sum, factor $\sum _{r=2} ^{k-1} w_{k-r}$ and then used the definition of $\tau$)}
&= x_k + \frac{a-1}{a} \left( w_{k-1}x_1+\sum _{r=2} ^{k-1} w_{k-r}\left(x_r -\frac{(-1)^r}{r!}x_1^r\right)  - \sum _{i=1} ^{k-2} \frac{(-1)^i}{i!}x_1^i \sum _{r=i+1} ^{k-1} w_{k-r}b_{r-i+1} \right) 
\intertext{(we permuted the two sums: $\sum_{r=2}^{k-1}\sum_{i=1}^{r-1}=\sum_{i=1}^{k-2}\sum_{r=i+1}^{k-1}$, note that this is the converse as previously)}
&= x_k + \frac{a-1}{a} \left(\sum _{r=2} ^{k-1} w_{k-r}x_r - \frac{(-1)^{k-1}w_1}{(k-1)!} x_1 ^{k-1} \right).
\end{align*}
(changing the variable $r$ to $s=k-r$, we have: 
$\sum_{r=i+1}^{k-1} w_{k-r}b_{r-i+1}=\sum_{s=1}^{k-i-1} w_{s}b_{k-i-s+1}=-w_{k-i}$ )

Now we set $b_{n+1}=0$ for convenience, and let $\lambda _0 \in \Af_n(\K) \cap \BA_n(\K)$ be given by
$$(x_k) \lambda _0 = \begin{cases} 
ax_1                     & {\rm if\ }k=1 \\ 
a^kx_k+(a^k-1)b _{k+1}   & {\rm if\ } 2\le k\le n 
\end{cases}.$$ 
Then we have $(x_1) \tau \lambda _0 \tau ^{-1}=ax_1$ and, for $2\le k\leq n$, we observe
\begin{align*}
(x_k) \tau  &= x_k+\sum_{i=1}^{k-2}\frac{(-1)^i x_1^i}{i!}(x_{k-i}+b_{k-i+1})
+\frac{(-1)^{k-1} x_1^{k-1}}{(k-1)!}b_2+\frac{(-1)^k x_1 ^k}{k!}(1-k)
\end{align*}
(we separated the term $k-1$ in the sum)
\begin{align*}
(x_k) \tau\lambda_0  &= a^k\left(x_k+\sum_{i=1}^{k-2}\frac{(-1)^i x_1^i}{i!}(x_{k-i}+b_{k-i+1})
+\frac{(-1)^k x_1^k}{k!}(1-k)\right.\\
                     & \phantom{xxx}\left.+a^{-1}\frac{(-1)^{k-1} x_1^{k-1}}{(k-1)!}b_2\right)+(a^k-1)b_{k+1}
\end{align*}
(by definition of $\lambda_0$, two $b_{k-i+1}$ terms canceled)
\begin{align*}
(x_k) \tau \lambda_0 \tau ^{-1} &=a^k\left(x_k-\frac{a-1}{a}\frac{(-1)^{k-1}b_2}{(k-1)!}x_1^{k-1}\right)+(a^k-1)b_{k+1}.
\end{align*}
Finally, define $\lambda _2 \in \Af_n(\Ik) \cap \BA_1(\Ik)$ by $\lambda _2 = (x_1,\ldots,x_{n-1},x_n+\frac{g}{a^n}x_1)$.  Then since $\lambda _2$ and $\tau$ commute and $w_1=-b_2$, it follows that setting $\lambda = \lambda_0\lambda_1 \lambda _2$ satisfies the lemma.
\end{proof}

\section{A $4$-triangular automorphism that is not \cotame}\label{s:lfour}

In this section, we improve the authors' result of \cite{EL} to produce an example of a $4$-triangular automorphism that is not \cotame.  This section can be read independently of the previous sections, but the reader will want to be familiar with the techniques used in \cite{EL}; for the sake of brevity, we adopt all the notations therein for the remainder of this paper, and  restrict our attention to $n=3$. In particular, we set
$\beta=(x+y^2(y+z^2)^2,y+z^2,z)\in \BA_3(\Ik)$, $\pi=(y,x,z)\in \Af_3(\Ik)$ and
$\theta_N=(\pi\beta)^N\pi(\pi\beta)^{-N}$.
The goal of this section is to prove:

\begin{theorem}[cf. \cite{EL} Theorem B]\label{thmb}
The automorphism $\theta_2$ is not \cotame.
\end{theorem}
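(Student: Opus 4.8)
The plan is to follow the strategy of \cite{EL}, which proves that $\theta_N$ is not \cotame for $N\ge 3$, and to push the argument down to the boundary value $N=2$. To show $\theta_2$ is not \cotame it suffices to produce a single tame automorphism lying outside $G:=\langle\theta_2,\Af_3(\Ik)\rangle$, so the whole problem is to find an invariant that is tightly constrained on $G$ but not on all of $\TA_3(\Ik)$. As in \cite{EL}, that invariant is the Newton polytope at infinity: to each $\phi\in\GA_3(\Ik)$ one attaches the triple $\Newi((x)\phi),\Newi((y)\phi),\Newi((z)\phi)$, and one isolates an \emph{admissible} family $\mathcal{S}$ of such data that (i) contains the data of $\theta_2$, (ii) is stable under left and right composition by $\Af_3(\Ik)$, and (iii) is stable under composition by $\theta_2^{\pm 1}$. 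Granting (i)--(iii), an induction on word length shows that every element of $G$ has admissible data, and it then remains to exhibit a triangular automorphism whose Newton polytope at infinity escapes $\mathcal{S}$.

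The concrete steps I would carry out are as follows. First I would compute $\Newi$ of the components of the building block $g=(\pi\beta)^2$ and of $g^{-1}=(\beta^{-1}\pi)^2$, using that $\New$ behaves predictably under the substitutions coming from $\beta=(x+y^2(y+z^2)^2,y+z^2,z)$ provided the leading parts do not cancel; since $\theta_2=g\,\pi\,g^{-1}$ and $\pi=(y,x,z)$ merely transposes $x$ and $y$, this yields the Newton polytopes at infinity of the three components of $\theta_2$ explicitly, together with their extremal data $\exti$. Second, with these polytopes in hand, I would define $\mathcal{S}$ through the edge/vertex direction constraints they exhibit, and verify (ii) and (iii): property (ii) is essentially the observation that an affine change of coordinates creates no new extremal directions at infinity, while (iii) is the non-cancellation/degree estimate of \cite{EL} controlling $\Newi(\phi\theta_2^{\pm 1})$ in terms of $\Newi(\phi)$. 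Third, I would exhibit an explicit triangular map — a Derksen-type elementary automorphism such as $(x+y^2,y,z)$ — whose Newton polytope at infinity has an extremal direction forbidden by $\mathcal{S}$, completing the separation and hence the proof that $\theta_2$ is not \cotame.

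The main obstacle is precisely that $N=2$ is the critical case. For $N\ge 3$ the non-degeneracy hypotheses behind (i) and (iii) follow from an asymptotic degree estimate: the polytopes $\Newi$ of the iterated components are large enough that the separating directions are forced and the governing leading coefficients manifestly survive composition. At $N=2$ these polytopes are as small as the method can tolerate, the relevant inequalities degenerate to equalities, and the asymptotic count no longer guarantees that the controlling coefficients are nonzero. I therefore expect the crux to be an explicit, low-degree computation verifying that the leading coefficients of the components of $\theta_2$ (and of its products with $\theta_2^{\pm 1}$) are genuinely nonzero and that no cancellation collapses an extremal vertex of $\Newi(\theta_2)$; it is this hands-on check, replacing the asymptotic estimate of \cite{EL}, that secures the base case and thereby the whole argument.
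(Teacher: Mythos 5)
Your overall strategy---separate $\langle\Af_3(\Ik),\theta_2\rangle$ from $\TA_3(\Ik)$ by a support/degree invariant of components, following \cite{EL} and treating $N=2$ as the critical case needing explicit low-degree computation---is the right frame, and your final separating step (a low-degree triangular map such as $(x+y^2,y,z)$ escapes the invariant) matches the paper. But there is a genuine gap at your property (ii). You assert that your admissible family $\mathcal{S}$ is ``stable under left and right composition by $\Af_3(\Ik)$'' because ``an affine change of coordinates creates no new extremal directions at infinity.'' That observation is false: a generic linear substitution turns $x+y^2$ into a generic quadratic whose Newton polytope at infinity is the full degree-$2$ simplex, with new extremal directions, and left composition replaces components by linear combinations of components, which is likewise uncontrolled by the individual polytopes. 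No nontrivial Newton-polytope family containing non-affine data can be stable under all of $\Af_3(\Ik)$, and this is exactly where the difficulty of the theorem lives. The actual argument never composes by arbitrary affine maps; it only needs stability of a set of \emph{polynomials} under $\pi\beta$, $\pi\beta^{-1}$, and the conjugated maps $(\pi\beta^{-1})^2\alpha\pi(\pi\beta)^2$, with an existential quantifier over the outer affine maps in the corollaries, and even then the case $\alpha\in\mathcal{A}\pri\mathcal{A}_4$ requires a stratified analysis of $\alpha$.

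Concretely, the paper's proof has two ingredients your plan does not supply. First, the invariant itself must be changed: the family $\mathcal{P}^*$ used in \cite{EL} for $N\ge 3$ is \emph{not} stable at $N=2$, so the paper replaces it by the larger family $\mathcal{Q}_4^*=\bigcup_{m\ge 4,n\ge 0}\mathcal{Q}_{m,n}^*$; your proposal keeps the invariant unspecified and implicitly assumes the $N\ge 3$ family only needs a sharper verification, whereas it genuinely fails. Second, the new content for $N=2$ is the case $\alpha\in\mathcal{A}_3\pri\mathcal{A}_4$, where one writes $\alpha=(u^8x+cz+d,u^2y,uz)$, computes $\gamma=\beta^{-1}\pi\beta^{-1}\alpha\beta\pi\beta$ explicitly, and splits into four cases ($u^{30}\ne 1$; $u^{30}=1$, $u^6\ne 1$; $u^6=1$, $c\ne 0$; $u^6=1$, $c=0$) to bound $\deg_{(1,1,0)}$, $\deg_{(3,3,1)}$ and $\ldeg_2$ of the images and show $(\mathcal{Q}_{m,n}^*)\pi\gamma\subset\mathcal{Q}_{4m,n}^*$ or $\mathcal{Q}_{3m,lm+n}^*$. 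Your ``hands-on check that leading coefficients are nonzero'' gestures at this but does not identify the case division or the corrected invariant, so as written the plan would stall exactly at the step the theorem is about.
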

  
The idea in \cite{EL} is to track the growth of particular sets of polynomials satisfying certain degree constraints after repeated applications of the maps $\pi \beta$, $\pi \beta ^{-1}$, and affine maps.  The crucial technical theorem is

\begin{theorem}[\cite{EL} Theorem 3]\label{thm3} If $N \geq 3$, then the set $\mathcal{P}^*$ is stable under the action of the automorphisms $\pi \beta$, $\pi \beta ^{-1}$, and $(\pi \beta ^{-1})^N \alpha \pi (\pi \beta)^N$ for any $\alpha \in \mathcal{A} \pri \mathcal{A}_4$.
\end{theorem}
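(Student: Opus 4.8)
The plan is to prove stability of $\mathcal{P}^*$ one generator at a time, reducing the theorem to three independent closure checks: that $(P)(\pi\beta) \in \mathcal{P}^*$, that $(P)(\pi\beta^{-1}) \in \mathcal{P}^*$, and that $(P)\left((\pi\beta^{-1})^N \alpha\pi(\pi\beta)^N\right) \in \mathcal{P}^*$ for every $P \in \mathcal{P}^*$ and every $\alpha \in \mathcal{A}\pri\mathcal{A}_4$. Following the framework of \cite{EL}, the invariant that controls membership in $\mathcal{P}^*$ is the leading-support data of the polynomial entries, recorded via $\supp$ and $\New$. So the first step is to restate the defining conditions of $\mathcal{P}^*$ purely in terms of these Newton-polytope data, and to fix once and for all the explicit monomial expansions of $\beta$, $\beta^{-1}$, and $\pi$.

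The two single-generator cases are direct. Since $\pi=(y,x,z)$ merely interchanges $x$ and $y$, and $\beta=(x+y^2(y+z^2)^2,y+z^2,z)$ is explicitly triangular, I would compute the images of a generic $P \in \mathcal{P}^*$ under $\pi\beta$ and $\pi\beta^{-1}$ by substitution, reading off how the extremal vertices of the associated Newton polytope are displaced. The degree constraints cutting out $\mathcal{P}^*$ are engineered so that the dominant factor $y^2(y+z^2)^2$ shifts the relevant vertices in a controlled, predictable direction; verifying closure then amounts to a finite check that each defining inequality survives and that no leading coefficient is annihilated in the process.

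The essential content lies in the composite third map. Writing $g=(\pi\beta^{-1})^N \alpha\pi(\pi\beta)^N$ and using the convention $(P)(\phi\psi)=((P)\phi)\psi$, computing $(P)g$ means applying, in order, $(\pi\beta^{-1})^N$, then $\alpha$, then $\pi$, and finally $(\pi\beta)^N$. I would first establish, by induction on $N$, a normal form for the leading behavior of the iterates $(\pi\beta)^N$ and $(\pi\beta^{-1})^N$, identifying which Newton-polytope vertices of their components dominate once $N\geq 3$; then push $P$ through the four stages, tracking the top-degree forms at each step. The hypothesis $\alpha\notin\mathcal{A}_4$ enters at the second stage: $\mathcal{A}_4$ is precisely the set of affine maps whose linear part would annihilate the distinguished leading direction produced by the first factor, so excluding them guarantees that this dominant form survives $\alpha\pi$ and re-expands correctly under the final application of $(\pi\beta)^N$.

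The step I expect to be the main obstacle is controlling cancellation at the extreme vertices across the full $2N$-fold composition: one must show that the highest-degree monomials generated by the final copy of $(\pi\beta)^N$ are not accidentally wiped out by interference from lower-order terms accumulated at earlier stages. The role of $N\geq 3$ is exactly to create enough separation between the dominant vertex and the remaining support to force this non-cancellation; for smaller $N$ the polytopes are too cramped and the argument genuinely fails, consistent with the hypothesis $N \geq 3$. Concretely, I would isolate the unique extremal vertex of the relevant Newton polytope, verify that its coefficient is nonzero by a direct computation, and use the exclusion of $\mathcal{A}_4$ to rule out the single way that coefficient could vanish, thereby confirming $(P)g \in \mathcal{P}^*$.
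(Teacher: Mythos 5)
There is a genuine gap: your proposal defers the entire content of the theorem to ``a finite check'' and ``a direct computation'' at exactly the point where the work lies, and the uniform picture you give of that point is wrong. The paper's proof (labeled as the proof of Theorem~\ref{thm3}, carried out for the companion set $\mathcal{Q}_4^*$, with the $\mathcal{P}^*$ case following the same architecture in \cite{EL}) does not treat $\alpha\in\mathcal{A}\pri\mathcal{A}_4$ as a single non-degeneracy hypothesis excluding ``the single way'' a leading coefficient could vanish. It stratifies the affine group: the single generators $\pi\beta^{\pm1}$ and the stratum $\alpha\in\mathcal{A}\pri\mathcal{A}_3$ are handled by quoting Lemma~9 and Propositions~12, 14, 15 of \cite{EL}, and all the new work is concentrated in the stratum $\alpha\in\mathcal{A}_3\pri\mathcal{A}_4$, where one may write $\alpha=(u^8x+cz+d,u^2y,uz)$. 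There the cancellation you fear genuinely occurs, for arithmetic reasons: expanding $\gamma=\beta^{-1}\pi\beta^{-1}\alpha\beta\pi\beta$, the top coefficient of $(x)\gamma$ is $u^2(1-u^{30})$, which vanishes precisely when $u^{30}=1$, and the proof must split into the cases $u^{30}\neq1$; $u^{30}=1$ with $u^6\neq1$; $u^6=1$ with $c\neq0$; and $u^6=1$, $c=0$, $d\neq0$, obtaining \emph{different} degree profiles ($\deg_{(1,1,0)}$, $\deg_{(3,3,1)}$, $\ldeg_2$) in each case, all of which must still satisfy the defining inequalities with equality only at the distinguished support point $(0,m,n)$. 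Your plan --- isolate the unique extremal vertex, verify its coefficient is nonzero, and rule out the one vanishing mode via $\alpha\notin\mathcal{A}_4$ --- would simply fail in the cases with $u^{30}=1$: the top coefficient does vanish there, yet the statement remains true because the degraded growth still respects the defining constraints. Relatedly, your characterization of $\mathcal{A}_4$ is inaccurate: it is a finite cyclic group (of order dividing $6$; indeed $\mathcal{C}=\mathcal{A}_4$), cut out by arithmetic conditions on the scalar $u$ together with $c=d=0$, not by the linear part ``annihilating a distinguished leading direction.''

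Your proposed induction on $N$ is also misdirected. Once stability of the target set under $\pi\beta$ and $\pi\beta^{-1}$ is established, the general word $(\pi\beta^{-1})^N\alpha\pi(\pi\beta)^N$ reduces immediately to a minimal-length core word, since the outer iterates each preserve the set; no normal form for the iterates and no ``separation growing with $N$'' across the $2N$-fold composition enters the argument. The role of the lower bound on $N$ is instead that for $\alpha\in\mathcal{A}_3\pri\mathcal{A}_4$ the shorter core word fails to preserve $\mathcal{P}^*$ --- which is exactly why the present paper replaces $\mathcal{P}^*$ by the larger set $\mathcal{Q}_4^*$ to reach $N\geq2$. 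A correct blind proof must (i) perform the substitution computations explicitly on the core word and (ii) organize the non-cancellation analysis around the filtration $\mathcal{A}\supset\mathcal{A}_3\supset\mathcal{A}_4$ with its root-of-unity case split; neither appears in your proposal.
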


However, this fails for $N=2$.  We thus modify the argument by considering the sets

$$\mathcal{P}_4^*=\bigcup_{m\ge 4,n\ge 0}\mathcal{P}_{m,n} ^* \subset \mathcal{P}^*{\ \ \rm and\ \ }\mathcal{Q}_4^*=\bigcup_{m\ge 4,n\ge 0}\mathcal{Q}_{m,n} ^* \subset \mathcal{Q}^*.$$
Note that for all integers $m\ge 1$ and $n\ge 0$, $\mathcal{P}_{m,n}^*\subset \mathcal{Q}_{4m,n}^*$, so we have $\mathcal{P}^*\subset \mathcal{Q}_4^*$.

We prove an analagous theorem that holds for all $N \geq 2$.

\begin{theorem} If $N \geq 2$, then the set ${\cal Q}_4^*$ is stable under the action of the automorphisms $\pi\beta$, $\pi\beta^{-1}$
and $(\pi\beta^{-1})^N\al\pi(\pi\beta)^N$ for any $\al\in{\cal A}\pri{\cal A}_4$.
\end{theorem}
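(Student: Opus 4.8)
**The plan is to adapt the proof of Theorem \ref{thm3} from \cite{EL}, replacing the role of $\mathcal{P}^*$ with the larger set $\mathcal{Q}_4^*$ and relaxing the hypothesis from $N\ge 3$ to $N\ge 2$.** The set $\mathcal{Q}_4^*$ is defined by a union indexed over $m\ge 4, n\ge 0$ of building blocks $\mathcal{Q}_{m,n}^*$, and the key relationship $\mathcal{P}_{m,n}^*\subset\mathcal{Q}_{4m,n}^*$ tells us that $\mathcal{Q}_4^*$ is a coarser but compatible filtration. The strategy is to verify the three stability claims separately: stability under $\pi\beta$, stability under $\pi\beta^{-1}$, and stability under the composite map $(\pi\beta^{-1})^N\alpha\pi(\pi\beta)^N$ for $\alpha\in\mathcal{A}\setminus\mathcal{A}_4$. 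For the first two maps, I expect the arguments to closely mirror those in \cite{EL}, since $\pi\beta$ and $\pi\beta^{-1}$ act on the degree data $(m,n)$ in a controlled, monotone way; the point of switching to the index structure of $\mathcal{Q}_{m,n}^*$ is precisely that it is robust enough to absorb the degree shifts these maps induce while keeping $m\ge 4$.

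\textbf{The heart of the matter is the composite map with $N=2$.} In \cite{EL}, the requirement $N\ge 3$ enters exactly in controlling $(\pi\beta^{-1})^N\alpha\pi(\pi\beta)^N$: the $N$ applications of $\pi\beta$ inflate the degrees enough that a single affine twist $\alpha\pi$ cannot push a polynomial out of $\mathcal{P}^*$, after which the $N$ applications of $\pi\beta^{-1}$ restore the constraints. When $N=2$ the degree margin is too small, and this is where $\mathcal{P}^*$ fails to be stable. The resolution is that $\mathcal{Q}_4^*$ starts its $m$-index at $4$ rather than at $1$; since $\mathcal{P}_{m,n}^*\subset\mathcal{Q}_{4m,n}^*$, the members of $\mathcal{Q}_4^*$ that arise as images of $\mathcal{P}^*$-elements already sit at a fourfold-higher $m$-level, giving exactly the extra degree room needed to compensate for losing one power of $N$. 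I would track the $(m,n)$-bookkeeping through the five-fold composite, verifying that the net effect on a polynomial in $\mathcal{Q}_{m,n}^*$ (with $m\ge 4$) lands it back in some $\mathcal{Q}_{m',n'}^*$ with $m'\ge 4$.

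\textbf{The main obstacle I anticipate is the degree accounting for the composite map at the boundary case.} Precisely, I expect the delicate step to be showing that when $\alpha\in\mathcal{A}\setminus\mathcal{A}_4$ acts in the middle of the sandwich, the worst-case cross terms it produces (the terms that, for $\mathcal{P}^*$ with $N=2$, caused the failure) are now controlled because the surrounding structure forces the relevant exponents to exceed the $m\ge 4$ threshold. This will require a careful reexamination of the inequalities in the proof of Theorem \ref{thm3}, checking at each inequality that replacing $N\ge 3$ by $N\ge 2$ while simultaneously replacing the lower bound $m\ge 1$ by $m\ge 4$ preserves every estimate. I would organize this as a case analysis on $\alpha$ (which components of $\alpha$ are genuinely nonaffine in the sense of lying outside $\mathcal{A}_4$), and in each case confirm the degree inequalities by direct substitution, relying on the containment $\mathcal{P}^*\subset\mathcal{Q}_4^*$ to reuse as much of the original argument as possible rather than redoing it from scratch.
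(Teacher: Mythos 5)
Your high-level strategy (reduce everything to degree bookkeeping on the indices $(m,n)$ and reuse as much of \cite{EL} as possible) matches the paper's, and indeed stability under $\pi\beta$ and $\pi\beta^{-1}$ is immediate from Lemma~9 of \cite{EL}, as is the composite map for $\al\in\mathcal{A}\pri\mathcal{A}_3$ from Propositions~12, 14 and~15 there. But there is a genuine gap: you never locate, let alone carry out, the one computation that constitutes the new content of this theorem. All of the difficulty is concentrated in the single remaining case $\al\in\mathcal{A}_3\pri\mathcal{A}_4$, where $\al=(u^8x+cz+d,u^2y,uz)$; there one must explicitly expand $\gamma=\beta^{-1}\pi\beta^{-1}\al\beta\pi\beta$ (using $\pi^2=\id$ to rewrite $(\pi\beta^{-1})^2\al\pi(\pi\beta)^2$ as $\pi\gamma$) and discover that the top term $u^2(1-u^{30})y^4$ of $(x)\gamma$ cancels precisely when $u^{30}=1$, with further degenerations when $u^6=1$ and when $c=0$. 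This forces a four-way subcase analysis (the paper's cases $A$, $B(2)$, $B(1)$, $B(0)$) with different degree profiles for $(x)\gamma$, so that $(\mathcal{Q}^*_{m,n})\pi\gamma$ lands in $\mathcal{Q}^*_{4m,n}$ in the generic case but only in $\mathcal{Q}^*_{3m,lm+n}$ in the degenerate ones. Your proposed case analysis on ``which components of $\al$ are genuinely nonaffine'' does not capture this: $\al$ is affine throughout, and the relevant dichotomy is given by arithmetic conditions on the parameters $u$ and $c$.

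Relatedly, your diagnosis of why passing from $\mathcal{P}^*$ to $\mathcal{Q}_4^*$ rescues the $N=2$ case is not the actual mechanism. It is not that the threshold $m\ge 4$ supplies ``extra degree room to compensate for losing one power of $N$''; rather, in the degenerate subcases the multiplier on $m$ drops from $4$ to $3$ and an extra $lm$ accrues to the second index, producing images in $\mathcal{Q}^*_{3m,lm+n}$ --- a target that the rigid structure of $\mathcal{P}^*$ cannot accommodate but that the two-parameter $\mathcal{Q}$-filtration can, which is precisely why the theorem is stated for $\mathcal{Q}_4^*$ rather than $\mathcal{P}^*$. Without the explicit expansion of $\gamma$, the degree table for $X$, $Y$, $Z$ in each subcase, and the verification that the $\ldeg_2$ inequality becomes an equality only at $(i,j,k)=(0,m,n)$, the proof cannot be completed along the lines you sketch.
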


Before giving the proof, we note that we immediately obtain the following corollaries analagous to \cite{EL}.

\begin{corollary}[cf. \cite{EL} Corollary 4]
Let $r\ge 1$ be an integer. Let $\al_0,\ldots,\al_{r}\in{\cal A}$, and set $\phi=\al_0\theta_2\al_1\cdots\theta_2\al_{r}$.
If $\al_1,\ldots,\al_{r-1}\in{\cal A}\pri {\cal A}_4$, then there exist $\al,\al'\in {\cal A}$ such that
$(y)\al\phi\al'\in{\cal Q}_4^*$.
\end{corollary}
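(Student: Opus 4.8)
The plan is to rewrite $\alpha\phi\alpha'$, for a suitable choice of affine maps $\alpha,\alpha'$, as a word in exactly the three families of automorphisms appearing in the stability theorem, applied to a base polynomial lying in $\mathcal{Q}_4^*$; then repeated application of that theorem (with $N=2$) gives the conclusion. Throughout I use that $\pi$ is an involution and the two group identities
\[
(\pi\beta)^{-2}=\pi(\pi\beta^{-1})^2\pi,\qquad \pi(\pi\beta)^{-2}=(\pi\beta^{-1})^2\pi,
\]
both of which follow immediately from $(\pi\beta)^{-1}=\beta^{-1}\pi$ and $\pi^2=\mathrm{id}$.

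First I would expand $\theta_2=(\pi\beta)^2\pi(\pi\beta)^{-2}$ inside $\phi=\alpha_0\theta_2\alpha_1\cdots\theta_2\alpha_r$ and regroup the atoms. Factoring out the leading $(\pi\beta)^2$, the product splits into $r-1$ middle blocks $\pi(\pi\beta)^{-2}\alpha_i(\pi\beta)^2$ (one per interior $\alpha_i$) followed by a trailing $\pi(\pi\beta)^{-2}$:
\[
\phi=\alpha_0\,(\pi\beta)^2\Big(\textstyle\prod_{i=1}^{r-1}\pi(\pi\beta)^{-2}\alpha_i(\pi\beta)^2\Big)\pi(\pi\beta)^{-2}\,\alpha_r.
\]
Writing $g_\gamma:=(\pi\beta^{-1})^2\gamma\pi(\pi\beta)^2$ for the third generator (with $N=2$), the identities above show that each middle block equals $g_{\alpha_i'}$ with $\alpha_i':=\pi\alpha_i\pi$, since $\pi(\pi\beta)^{-2}\alpha_i(\pi\beta)^2=(\pi\beta^{-1})^2\pi\alpha_i(\pi\beta)^2=(\pi\beta^{-1})^2\alpha_i'\pi(\pi\beta)^2$; likewise the trailing factor is $\pi(\pi\beta)^{-2}=(\pi\beta^{-1})^2\pi$. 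Hence $\phi=\alpha_0(\pi\beta)^2 g_{\alpha_1'}\cdots g_{\alpha_{r-1}'}(\pi\beta^{-1})^2\pi\alpha_r$.

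Now I would choose $\alpha=\alpha_0^{-1}$ and $\alpha'=\alpha_r^{-1}\pi$, both affine. The outer factors then telescope, as $(y)\alpha\alpha_0=y$ and $\pi\alpha_r\alpha'=\pi\alpha_r\alpha_r^{-1}\pi=\mathrm{id}$, leaving
\[
(y)\alpha\phi\alpha'=(y)\,(\pi\beta)^2\,g_{\alpha_1'}\cdots g_{\alpha_{r-1}'}\,(\pi\beta^{-1})^2.
\]
Every factor is now one of the generators $\pi\beta$, $\pi\beta^{-1}$, $g_{\alpha_i'}$ covered by the stability theorem. It remains to feed this word through the theorem. Because the first step starts from $y\notin\mathcal{Q}_4^*$, it cannot itself invoke stability, so I would verify the base case $(y)\pi\beta=x+y^2(y+z^2)^2\in\mathcal{Q}_4^*$ by a direct check against the definition (using the inclusion $\mathcal{P}^*\subset\mathcal{Q}_4^*$). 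After this one explicit computation, each subsequent application of $\pi\beta$, of a $g_{\alpha_i'}$, and of $\pi\beta^{-1}$ preserves $\mathcal{Q}_4^*$ by the stability theorem, so the final polynomial lies in $\mathcal{Q}_4^*$.

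Two points need care: (i) the affine maps actually appearing in the generators are the conjugates $\alpha_i'=\pi\alpha_i\pi$, so I must know these still lie in $\mathcal{A}\pri\mathcal{A}_4$; this is precisely where the hypothesis $\alpha_1,\ldots,\alpha_{r-1}\in\mathcal{A}\pri\mathcal{A}_4$ enters, and it requires that $\mathcal{A}_4$ be invariant under conjugation by $\pi$, which I would confirm from its definition in \cite{EL}; and (ii) the base-case membership $(y)\pi\beta\in\mathcal{Q}_4^*$. I expect (i), the $\pi$-conjugation invariance of $\mathcal{A}_4$, to be the main obstacle, since the entire reduction to the stability theorem hinges on the conjugated maps remaining outside $\mathcal{A}_4$; the remainder is the bookkeeping with the group identities above together with a single explicit base-case calculation.
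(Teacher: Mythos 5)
Your proposal is correct and takes essentially the same route the paper intends: the corollary is obtained immediately from the stability theorem exactly as in \cite{EL}, by rewriting $\al_0^{-1}\phi\,\al_r^{-1}\pi$ as the word $(\pi\beta)^2\,g_{\pi\al_1\pi}\cdots g_{\pi\al_{r-1}\pi}\,(\pi\beta^{-1})^2$ in the three stable generators and checking the base case $(y)\pi\beta=x+y^2(y+z^2)^2\in\mathcal{P}^*\subset\mathcal{Q}_4^*$. Both points you flag are fine: $\mathcal{A}_4$ consists of the maps $(u^8x,u^2y,uz)$ with $u^6=1$ (so $u^8=u^2$), which are fixed pointwise under conjugation by $\pi=(y,x,z)$, hence $\pi\al_i\pi\in\mathcal{A}\pri\mathcal{A}_4$ whenever $\al_i\in\mathcal{A}\pri\mathcal{A}_4$.
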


\begin{corollary}[cf. \cite{EL} Corollary 5] Let $\phi\in\langle{\cal A},\theta_2\rangle\pri{\cal A}$.  
Then there exist $\al,\al'\in {\cal A}$ such that $(y)\al\phi\al'\in{\cal Q}_4^*$.
\end{corollary}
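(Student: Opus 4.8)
The plan is to derive this from Corollary~4 by a normal-form argument that eliminates the restriction $\al_1,\ldots,\al_{r-1}\in{\cal A}\pri{\cal A}_4$ imposed there. The first step I would take is to record that $\theta_2$ is an involution: because $\pi=(y,x,z)$ satisfies $\pi^2=\id$, we get
$$\theta_2^2=(\pi\beta)^2\pi(\pi\beta)^{-2}(\pi\beta)^2\pi(\pi\beta)^{-2}=(\pi\beta)^2\pi^2(\pi\beta)^{-2}=\id.$$
Hence every element of $\langle{\cal A},\theta_2\rangle$ may be written in alternating form $\al_0\theta_2\al_1\theta_2\cdots\theta_2\al_r$ with each $\al_i\in{\cal A}$ and $r\ge 0$ (merging adjacent affine factors and deleting any pair $\theta_2\theta_2$), and it belongs to ${\cal A}$ precisely when it admits such a representation with $r=0$.

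Now fix $\phi\in\langle{\cal A},\theta_2\rangle\pri{\cal A}$ and choose a representation $\phi=\al_0\theta_2\al_1\cdots\theta_2\al_r$ with the number $r$ of factors $\theta_2$ as small as possible; since $\phi\notin{\cal A}$ we have $r\ge 1$. I would then argue that in this minimal representation every middle factor satisfies $\al_i\in{\cal A}\pri{\cal A}_4$ for $1\le i\le r-1$. This is exactly the point at which the defining role of ${\cal A}_4$ in \cite{EL} is used: if some middle $\al_i$ lay in ${\cal A}_4$, then $\theta_2\al_i\theta_2\in{\cal A}$, so the subword $\al_{i-1}(\theta_2\al_i\theta_2)\al_{i+1}$ would collapse to a single affine automorphism and produce a representation of $\phi$ with only $r-2$ occurrences of $\theta_2$, contradicting minimality. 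When $r=1$ there are no middle factors and the condition holds vacuously. With this established, $\phi$ satisfies the hypotheses of Corollary~4, and invoking it yields $\al,\al'\in{\cal A}$ with $(y)\al\phi\al'\in{\cal Q}_4^*$.

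The routine bookkeeping of the reduction is not where the difficulty lies; the essential input is the absorption property $\theta_2{\cal A}_4\theta_2\subseteq{\cal A}$, which is precisely the behaviour ${\cal A}_4$ is engineered to have in \cite{EL} and which I would cite rather than reprove. Two minor points deserve a check: that the reduction can never push $r$ below $1$ (guaranteed by $\phi\notin{\cal A}$), and that a minimal $r$ exists (immediate, as the admissible values form a nonempty subset of $\IN$). The only genuinely new verification relative to \cite{EL} is that Corollary~4 --- resting on the stability theorem for ${\cal Q}_4^*$, valid for all $N\ge 2$ --- applies to the value $N=2$ giving $\theta_2$, which is exactly the improvement over the $N\ge 3$ constraint in the original argument.
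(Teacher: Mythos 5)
Your proof is correct and follows essentially the same route the paper intends: the corollary is deduced from the preceding one (cf.\ \cite{EL} Corollary 4) by choosing a representation $\phi=\al_0\theta_2\al_1\cdots\theta_2\al_r$ with $r$ minimal and using that $\theta_2$ is an involution whose conjugation absorbs ${\cal A}_4$ into ${\cal A}$ (indeed ${\cal A}_4$ centralizes both $\pi$ and $\beta$, hence commutes with $\theta_2$), so a middle factor in ${\cal A}_4$ would let $r$ drop by $2$. This is precisely the reduction from \cite{EL}, the only new ingredient being that the stability theorem for ${\cal Q}_4^*$ now covers $N=2$ and thus makes Corollary~4 available for $\theta_2$.
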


\begin{corollary}[cf. \cite{EL} Corollary 6]
We have: ${\cal C}={\cal A}_4$.  In particular, ${\mathcal C}$ is a finite cyclic group of order $1,2,3$ or $6$.
\end{corollary}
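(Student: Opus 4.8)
The plan is to establish the set equality ${\cal C}={\cal A}_4$ by proving the two inclusions separately, and then to transport the group structure from ${\cal A}_4$, which \cite{EL} already identifies as a finite cyclic group of order dividing $6$. Recall that ${\cal C}$ is the amalgamating subgroup consisting of those $\alpha\in{\cal A}$ for which $\theta_2\alpha\theta_2\in{\cal A}$ (equivalently ${\cal C}={\cal A}\cap\theta_2{\cal A}\theta_2^{-1}$, using that $\theta_2$ is an involution, so $\theta_2^{-1}=\theta_2$). Once ${\cal C}={\cal A}_4$ is proved, the ``in particular'' clause follows at once from the known structure of ${\cal A}_4$.

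The inclusion ${\cal A}_4\subseteq{\cal C}$ I would treat as a direct verification: for each $\alpha$ in the explicit finite group ${\cal A}_4$ described in \cite{EL}, a short computation shows that $\theta_2\alpha\theta_2$ is again affine, so $\alpha\in{\cal C}$. This direction involves no growth estimates and presents no real difficulty.

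The content of the corollary is the reverse inclusion ${\cal C}\subseteq{\cal A}_4$, and here the growth machinery does the work. Suppose toward a contradiction that some $\alpha\in{\cal C}$ has $\alpha\notin{\cal A}_4$, and set $\phi=\theta_2\alpha\theta_2$. This is a word of exactly the shape handled by Corollary 4, with $r=2$, $\alpha_0=\alpha_2=\id$, and single interior factor $\alpha_1=\alpha\in{\cal A}\pri{\cal A}_4$; Corollary 4 therefore produces $\alpha',\alpha''\in{\cal A}$ with $(y)\alpha'\phi\alpha''\in{\cal Q}_4^*$. But $\alpha\in{\cal C}$ means precisely that $\phi\in{\cal A}$, so $\alpha'\phi\alpha''$ is affine and $(y)\alpha'\phi\alpha''$ has degree $1$. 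Since none of the ${\cal Q}_{m,n}^*$ with $m\ge 4$ contains an affine polynomial, we get $(y)\alpha'\phi\alpha''\notin{\cal Q}_4^*$, a contradiction. Hence ${\cal C}\subseteq{\cal A}_4$.

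Putting the inclusions together gives ${\cal C}={\cal A}_4$, and since ${\cal A}_4$ is cyclic of order $1,2,3$, or $6$ by \cite{EL}, so is ${\cal C}$. The genuine difficulty underlying this corollary is not internal to it but lies upstream, in Corollary 4 and ultimately in the stability theorem for ${\cal Q}_4^*$; within the present argument the only point demanding care is to check that the length-$2$ word $\theta_2\alpha\theta_2$ with $\alpha\notin{\cal A}_4$ really meets the reducedness hypothesis of Corollary 4, so that the contrast between affine polynomials and membership in ${\cal Q}_4^*$ closes the contradiction.
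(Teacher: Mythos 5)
Your proof is correct and follows essentially the same route the paper intends: the paper states this corollary without proof as an immediate analogue of Corollary 6 of \cite{EL}, whose argument is exactly your two-inclusion scheme (direct verification for ${\cal A}_4\subseteq{\cal C}$, and for the converse an application of the preceding corollary with $r=2$ to $\theta_2\alpha\theta_2$, contradicting affineness via $\deg_{(1,1,1)}\ge 4$ on ${\cal Q}_4^*$). No substantive difference from the paper's approach.
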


If $f\in{\cal Q}_4^*$ then ${\rm ldeg}_2(f)=(0,m,n)$ and $\deg_{(1,1,1)}(f)\ge 4$, so we obtain the following version of 
Corollary~7.

\begin{corollary}[cf. \cite{EL} Corollary 7] 
Let $\phi\in \TA_3(\Ik)$ be a tame automorphism with $\deg_{(1,1,1)}((f)\phi)\le 4$ for all $f \in \Ik[\bx]$ with $\deg _{(1,1,1)} (f) = 1$.  If $\phi \notin \mathcal{A}$, then $\phi\not\in\langle{\cal A},\theta_2\rangle$.  In particular, $ \langle \mathcal{A}, \theta_2 \rangle$ is a proper subgroup of $\mathcal{T}$.
\end{corollary}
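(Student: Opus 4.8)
The plan is to argue by contradiction and then to deduce properness from a single explicit witness. Throughout I record the elementary fact that, since every $\alpha\in\mathcal{A}$ has invertible linear part, substitution by $\alpha$ carries the top-degree form of a polynomial to a nonzero form of the same degree; hence $\deg_{(1,1,1)}$ is unchanged under post-composition with any affine automorphism, and $(y)\alpha$ is a linear polynomial with $\deg_{(1,1,1)}((y)\alpha)=1$.

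So suppose toward a contradiction that $\phi\in\langle\mathcal{A},\theta_2\rangle$ with $\phi\notin\mathcal{A}$, and that $\deg_{(1,1,1)}((f)\phi)\le 4$ for every $f$ with $\deg_{(1,1,1)}(f)=1$. By the analogue of \cite{EL} Corollary~5 proved above, there exist $\alpha,\alpha'\in\mathcal{A}$ with $g:=(y)\alpha\phi\alpha'\in\mathcal{Q}_4^*$. Applying the hypothesis to $f=(y)\alpha$ gives $\deg_{(1,1,1)}((y)\alpha\phi)\le 4$, and since $\alpha'$ preserves $(1,1,1)$-degree we obtain $\deg_{(1,1,1)}(g)\le 4$. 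On the other hand $g\in\mathcal{Q}_4^*$ forces $\deg_{(1,1,1)}(g)\ge 4$, so $\deg_{(1,1,1)}(g)=4$.

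The main obstacle is precisely this boundary case: the crude bound $\deg_{(1,1,1)}(g)\ge 4$ alone does not contradict $\deg_{(1,1,1)}(g)\le 4$, so I must use the finer datum that, by the definition $\mathcal{Q}_4^*=\bigcup_{m\ge4,\,n\ge0}\mathcal{Q}_{m,n}^*$ and the summary property, $\ldeg_2(g)=(0,m,n)$ with $m\ge4$. This says the leading monomial of $g$ in the $y$-priority order is $y^mz^n$ with $m\ge4$; combined with $4=\deg_{(1,1,1)}(g)\ge m+n\ge m\ge 4$ it forces $m=4$ and $n=0$, so $g=cy^4+h$ with $c\ne0$, where $h$ has $y$-degree $<4$ and total degree at most $4$. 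I would then exclude this from the construction of $\mathcal{Q}_4^*$: its elements are produced from iterates of $\pi\beta$, and the nonlinear part $y^2(y+z^2)^2$ of $\beta$ invariably contributes $z$-heavy companion monomials (such as $y^2z^4$) whose total degree strictly exceeds that of the $y$-leading monomial, so a genuine element of $\mathcal{Q}_4^*$ with leading $y$-degree $\ge4$ must satisfy $\deg_{(1,1,1)}>4$. This contradicts $\deg_{(1,1,1)}(g)=4$ and proves the main assertion. This is the one step that genuinely requires reopening the definition of $\mathcal{Q}_{m,n}^*$ from \cite{EL} rather than invoking the two summary properties as black boxes, and I expect it to be the crux of the write-up.

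Finally, for the clause that $\langle\mathcal{A},\theta_2\rangle$ is a proper subgroup of $\mathcal{T}$, I would produce a witness: the lower-triangular, hence tame, automorphism $\sigma=(x,y+x^2,z)\in\BA_3(\Ik)\subset\mathcal{T}$ is not affine, and it sends every linear form to a polynomial of total degree at most $2\le4$, so it satisfies the hypothesis of the corollary. The main assertion then yields $\sigma\notin\langle\mathcal{A},\theta_2\rangle$, whence $\langle\mathcal{A},\theta_2\rangle\neq\mathcal{T}$.
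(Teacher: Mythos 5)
Your overall strategy---argue by contradiction via the analogue of \cite{EL} Corollary 5, use that affine substitution preserves $\deg_{(1,1,1)}$ so that the hypothesis applies to $g=(y)\alpha\phi\alpha'$, and then exhibit a low-degree triangular witness for the properness clause---is exactly the paper's route, and those parts are fine. The paper's own justification is nothing more than the one-line comparison recorded just before the statement: $g\in\mathcal{Q}_4^*$ forces $\ldeg_2(g)=(0,m,n)$ with $m\ge 4$, so the monomial $y^mz^n$ occurs in $g$ and $\deg_{(1,1,1)}(g)\ge m+n\ge 4$, which is played off against the hypothesis $\deg_{(1,1,1)}(g)\le 4$. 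The paper does not attempt any finer analysis of the borderline case $m=4$, $n=0$; you are right to notice that this case is where the tension lies.

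The extra step you add to dispose of that borderline case is, however, where your write-up breaks down. The claim that ``a genuine element of $\mathcal{Q}_4^*$ with leading $y$-degree $\ge 4$ must satisfy $\deg_{(1,1,1)}>4$'' is false for the set $\mathcal{Q}_4^*$ as it is actually defined: $\mathcal{Q}_{m,n}^*$ is cut out by degree inequalities on the support ($i+j\le m$, $3i+3j+k\le 3m+n$) together with the leading-term condition $\ldeg_2=(0,m,n)$, not by being ``produced from iterates of $\pi\beta$,'' and it contains $y^4\in\mathcal{Q}_{4,0}^*$, which has total degree exactly $4$. What your argument actually requires is the sharper statement that the \emph{particular} polynomials $(y)\alpha\phi\alpha'$ delivered by the Corollary-5 analogue have total degree strictly greater than $4$, i.e., that the $z$-heavy companion monomials of $y^2(y+z^2)^2$ survive the entire induction. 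That is precisely the kind of information the coarser invariant $\mathcal{Q}_4^*$ discards (the passage from $\mathcal{P}^*$ to $\mathcal{Q}_4^*$ is what makes the $N=2$ stability theorem go through), so recovering it would mean re-running the stability argument with a finer set; you neither state nor prove such a result, so this step is a genuine gap rather than a detail of write-up. Note, though, that for the only use made of the corollary (Theorem 2, properness of $\langle\mathcal{A},\theta_2\rangle$) a witness with degree bound $\le 3$ suffices: your $\sigma=(x,y+x^2,z)$ gives bound $2$, and then $2<4\le\deg_{(1,1,1)}(g)$ is an immediate contradiction, so the ``in particular'' clause of your argument stands on its own.
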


We observe that this, together with Theorem \ref{mt1}, gives Theorem \ref{mt2}.

\begin{corollary}[cf. \cite{EL} Corollary 8] 
The group $\langle{\cal A},\theta_2\rangle$ is the amalgamated free product of ${\cal A}$ and
$\langle{\cal C},\theta_2\rangle$ along their intersection ${\cal C}$.
\end{corollary}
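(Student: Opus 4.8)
The plan is to verify the standard normal-form characterization of an amalgamated free product. Write $G=\langle{\cal A},\theta_2\rangle$ and $B=\langle{\cal C},\theta_2\rangle$. It suffices to check three things: that $G=\langle{\cal A},B\rangle$; that ${\cal A}\cap B={\cal C}$; and that no reduced alternating word in the factors ${\cal A}$ and $B$ collapses to the identity. The first is immediate, since $\theta_2\in B$ and ${\cal C}\subset{\cal A}$. For the second, the inclusion ${\cal C}\subset{\cal A}\cap B$ is automatic, while the reverse inclusion is precisely the identification of the affine part of $B$ recorded in the corollary computing ${\cal C}={\cal A}_4$ (cf.\ \cite{EL} Corollary 6): any affine element of $B$ already lies in ${\cal C}$. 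In particular the affine elements of $B$ are exactly ${\cal C}$, so a syllable lying in $B\pri{\cal C}$ is the same thing as a \emph{non-affine} element of $B$.

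The heart of the matter is the reduced-word condition. By the normal form theorem, and the usual reduction to cyclically reduced words, it is enough to show that every reduced word of length at least two which begins and ends with a syllable from $B\pri{\cal C}$ is non-affine: a single syllable lies outside ${\cal C}$ and hence is not the identity, and a word flanked by ${\cal A}\pri{\cal C}$-syllables is brought into the required form by conjugating. So let $w=b_0a_1b_1\cdots a_mb_m$ be such a word, with $a_i\in{\cal A}\pri{\cal C}$ and $b_i\in B\pri{\cal C}$. Each $b_i$, being a non-affine element of $B=\langle{\cal C},\theta_2\rangle$, expands as an alternating product of ${\cal C}$-elements and powers $\theta_2^{\pm1}$; absorbing the leading and trailing ${\cal C}$-fringe of each $b_i$ into the neighbouring ${\cal A}$-syllables puts $w$ in the form $\al_0\theta_2^{e_1}\al_1\cdots\theta_2^{e_r}\al_r$ appearing in the first degree-growth corollary (cf.\ \cite{EL} Corollary 4).

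The plan is then to read off that the reducedness of $w$ forces the intermediate affine factors to satisfy $\al_1,\ldots,\al_{r-1}\in{\cal A}\pri{\cal A}_4$, which is exactly the hypothesis of that corollary; its proof in turn rests on the stability theorem above, applied to both $\pi\beta$ and $\pi\beta^{-1}$. The corollary then supplies $\al,\al'\in{\cal A}$ with $(y)\al w\al'\in{\cal Q}_4^*$, and since every $f\in{\cal Q}_4^*$ has $\deg_{(1,1,1)}(f)\ge 4>1$, the word $w$ is non-affine; in particular $w\ne\id$. With all three conditions verified, the normal form theorem yields $G={\cal A}*_{\cal C}B$, as claimed.

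I expect the main obstacle to lie in the bookkeeping of that last expansion: matching the alternating syllable structure of the abstract amalgam ${\cal A}*_{\cal C}B$ with the concrete form $\al_0\theta_2^{e_1}\cdots\theta_2^{e_r}\al_r$. The delicate point is that a ${\cal C}$-factor occurring strictly between two consecutive $\theta_2^{\pm1}$ \emph{inside a single} $B$-syllable is not a separating affine syllable of the amalgam, and must not be confused with the intermediate $\al_i$ that are constrained to avoid ${\cal A}_4$. One must therefore use the internal structure of $B=\langle{\cal C},\theta_2\rangle$ carefully, so that each $B$-syllable contributes at least one genuine $\theta_2^{\pm1}$ with its ${\cal C}$-fringe absorbed, guaranteeing that after expansion the separating affine factors are precisely the original ${\cal A}\pri{\cal C}$-syllables and the hypothesis of the degree-growth corollary is genuinely met.
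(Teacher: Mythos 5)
The paper does not actually write out a proof of this corollary: it is listed among the statements that follow ``analogously to \cite{EL}'' once the stability theorem for $\mathcal{Q}_4^*$ is in place, so the intended argument is the one from \cite{EL} Corollary~8 with $\mathcal{P}^*$ replaced by $\mathcal{Q}_4^*$. Your overall strategy --- verify $\mathcal{A}\cap\langle\mathcal{C},\theta_2\rangle=\mathcal{C}$, then kill reduced alternating words by rewriting them in the form $\al_0\theta_2\al_1\cdots\theta_2\al_r$ with $\al_1,\ldots,\al_{r-1}\in\mathcal{A}\setminus\mathcal{A}_4$ and invoking the degree-growth corollary --- is exactly that intended argument, so the approach is right.

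The gap is the step you yourself flag at the end and then leave unresolved. If a single $B$-syllable $b_i\in\langle\mathcal{C},\theta_2\rangle\setminus\mathcal{C}$ could expand as, say, $c_0\theta_2c_1\theta_2c_2$ with $c_1\in\mathcal{C}\setminus\{\id\}$, then after your ``fringe absorption'' the word $\al_0\theta_2\al_1\cdots\theta_2\al_r$ would contain $c_1$ as one of the \emph{separating} affine factors, and since $\mathcal{C}=\mathcal{A}_4$ this factor lies in $\mathcal{A}_4$, so the hypothesis of the degree-growth corollary fails and the whole argument stalls. Saying ``each $B$-syllable contributes at least one genuine $\theta_2^{\pm1}$'' does not address this: the problem is a syllable contributing \emph{two or more}. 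What closes the gap is the actual structure of $B$: since $\pi^2=\id$, the map $\theta_2=(\pi\beta)^2\pi(\pi\beta)^{-2}$ is an involution, and $\mathcal{C}=\mathcal{A}_4$ is the finite cyclic group which (as established in \cite{EL}) normalizes $\langle\theta_2\rangle$; hence $\langle\mathcal{C},\theta_2\rangle=\mathcal{C}\cup\mathcal{C}\theta_2\mathcal{C}$, every $b_i\in B\setminus\mathcal{C}$ is $c\theta_2c'$ with $c,c'\in\mathcal{C}$, each $B$-syllable contributes \emph{exactly} one $\theta_2$, and the separating affine factors are the products $c_i'a_{i+1}c_{i+1}$, which lie in $\mathcal{A}\setminus\mathcal{A}_4$ precisely because $a_{i+1}\notin\mathcal{C}$ and $\mathcal{C}=\mathcal{A}_4$ is a group. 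The same structural fact is what you need (and do not supply) for $\mathcal{A}\cap B\subseteq\mathcal{C}$: Corollary~6 alone computes $\mathcal{C}$ but says nothing about which elements of $B$ are affine; one needs that every element of $B\setminus\mathcal{C}$ has the form $c\theta_2c'$ and is therefore non-affine. You should make the description of $B$ explicit before the expansion step; with it, the rest of your argument goes through as written.
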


\begin{remark}
Using that that ${\cal C}$ is a finite group, one can easily check 
that the group $\langle{\cal A},\theta_N\rangle$ (for all $N\ge 2$) 
shares with $\TA_2(\K)$ the property of being acylindrically hyperbolic (see~\cite{Minasyan} for the definition).
\end{remark}

We conclude the paper by proving Theorem \ref{thm3}.
\begin{proof}[Proof of Theorem \ref{thm3}]
If $\gamma\in\{\beta,\beta^{-1}\}$ then $({\cal Q}_4^*)\pi\gamma\subset{\cal Q}_4^*$ by Lemma~9 of~\cite{EL}.
If $\al\in{\cal A}\pri{\cal A}_3$ then $({\cal Q}_4^*)(\pi\beta^{-1})^2\al\beta\pi\beta\subset{\cal Q}_4^*$
by Propositions~12, 14 and 15 of \cite{EL}. It remains to prove that $({\cal Q}_4^*)(\pi\beta^{-1})^2\al\beta\pi\beta\subset{\cal Q}_4^*$
in the case $\al\in{\cal A}_3\pri{\cal A}_4$.
In this case, we can write $\al=(u^8x+cz+d,u^2y,uz)$ for some $u\in\Ik^*$ and $c,d\in\Ik$).
Set $\gamma=\beta^{-1}\pi\beta^{-1}\al\beta\pi\beta$ and compute
$$\gamma=(z^2x+u^2y^2(y+z^2)^2-(u^8y+Z_1)^2(u^8y+Z_2)^2,u^8y+Z_2,uz)$$
where $Z_1=u^8z^2+cz+d$ and $Z_2=(u^8-u^2)z^2+cz+d$ are polynomials in $z$ of degree $\le 2$. Moreover 
the degree of $Z_2$ is $\le 1$ when $u^6=1$.

We set $X=(x)\gamma$, $Y=(y)\gamma$, and $Z=(z)\gamma$, and examine their degrees.  Expanding $X$ we have:
$$X=z^2x+u^2(1-u^{30})y^4+2u^2((1-u^{24}(2u^6-1))z^2-u^{22}cz-u^{22}d)y^3$$
$$\hspace*{1cm}+(z^4-u^{16}(Z_1^2+4Z_1Z_2+Z_2^2))y^2-2u^2(Z_1Z_2^2+Z_1^2Z_2)y-Z_1^2Z_2^2.$$

To compute the relevant degrees of $X$, we must consider $4$ individual cases.
$$
\begin{array}{|c|c|c|c|}
  \hline
  {\rm Case\ }A &  {\rm Case\ }B(2) & {\rm Case\ }B(1) & {\rm Case\ }B(0)  \\
  \hline
  u^{30}\ne 1 & u^{30}=1{\rm\ and\ }u^6\ne 1 & u^6=1{\rm\ and\ }c\ne 0 & u^6=1,\,c=0{\rm\ and\ }c\ne 0\\
  \hline
 
\end{array}
$$

In each of these cases, we summarize the relevant degrees of $X$,$Y$, and $Z$.  In the table, $l \in \{0,1,2\}$ distinguishes between cases B(0), B(1), and B(2).

$$
\begin{array}{|c|c|c|c|c|c|c|c|c|}
  \hline
   & \deg_{(1,1,0)} & \deg_{(3,3,1)} & {\rm ldeg}_2  \\
  \hline
  X\, ({\rm Case\ } A)  & 4    & 12 & (0,4,0) \\
   \hline
  X\, ({\rm Case\ } B(l)) &  3 & 9+l & (0,3,l) \\
  \hline
  Y & 1                 & 3                             & (0,1,0)  \\
  \hline
  Z & 0                 & 1                             & (0,0,1)  \\
  \hline
\end{array}
$$
Now, we prove that $({\cal Q}_4^*)\pi\gamma\subset{\cal Q}_4^*$ with the same technique as in Lemma~9 of~\cite{EL}.
Let $m\ge 4$ and let $n\ge 0$ be integers, and let $P\in\mathcal{Q}_{m,n}^{*}$. For all $v=(i,j,k)\in\supp(P)$, we have $i+j\le m$, $3i+3j+k\le 3m+n$
and $(\bx^v)\pi\gamma=Y^{i}X^{j}Z^{k}$.\\

First, in case $A$ we compute 
\begin{align*}
\deg_{(1,1,0)}((\bx^v)\pi\gamma)&=i+4j\le 4(i+j)\le 4m,\\
\deg_{(3,3,1)}((\bx^v)\pi\gamma)&=3i+12j+k\le 9(i+j)+3i+3j+k\\
                                &\le 9m+3m+n=3(4m)+n,\\
{\rm ldeg}_2((\bx^v)\pi\gamma)&=(0,i+4j,k)\le_2 (0,4m,n).
\end{align*}
We check that the last inequality is an equality if and only if $(i,j,k)=(0,m,n)$ which belongs to $\supp(P)$; thus $({\cal Q}_{m,n}^*)\pi\gamma\subset{\cal Q}_{4m,n}^*$.\\

Now in cases $B(l)$ we compute 
\begin{align*}
\deg_{(1,1,0)}((\bx^v)\pi\gamma)&=i+3j\le 3(i+j)\le 3m,\\
\deg_{(3,3,1)}((\bx^v)\pi\gamma)&=3i+(9+l)j+k\le (6+l)(i+j)+3i+3j+k\\
                                &\le (6+l)m+3m+n=3(3m)+lm+n,\\
{\rm ldeg}_2((\bx^v)\pi\gamma)&=(0,i+3j,lj+k)\le_2 (0,3m,lm+n).
\end{align*}
We check that the last inequality is an equality if and only if $(i,j,k)=(0,m,n)$ which belongs to $\supp(P)$; thus $({\cal Q}_{m,n}^*)\pi\gamma\subset{\cal Q}_{3m,lm+n}^*$.
\end{proof}

\end{document}